\pgfplotsset{compat=1.18} 
\def\be{\begin{equation}}
	\def\ee{\end{equation}}
\def\bea{\begin{eqnarray}}
	\def\eea{\end{eqnarray}}
\def\bt{\begin{theorem}}
	\def\et{\end{theorem}}
\def\bl{\begin{lemma}}
	\def\el{\end{lemma}}
\def\br{\begin{remark}}
	\def\er{\end{remark}}
\def\bc{\begin{corollary}}
	\def\ec{\end{corollary}}
\def\bd{\begin{definition}}
	\def\ed{\end{definition}}
\def\b{\beta}
\def\z{\zeta}
\def\k{\kappa}
\def\x{\xi}
\def\cK{\mathcal{K}}
\def\bbR{\mathbb{R}}
\def\b1{B_{1}^}
\def\bp{B_p^}
\def\ba{\begin{array}}
	\def\ea{\end{array}}
\def\ben{\begin{enumerate}}
	\def\een{\end{enumerate}}
\newtheorem{theorem}{Theorem}[section]
\newtheorem{lemma}{Lemma}[section]
\newtheorem{remark}{Remark}[section]
\newtheorem{proposition}[theorem]{Proposition}
\newtheorem{corollary}{Corollary}[section]
\newtheorem{definition}{Definition}[section]
\newtheorem{defn}[theorem]{Definition}
\def\bt{\begin{theorem}}
	\def\et{\end{theorem}}
\def\bl{\begin{lemma}}
	\def\el{\end{lemma}}
\def\br{\begin{remark}}
	\def\er{\end{remark}}
\def\bc{\begin{corollary}}
	\def\ec{\end{corollary}}
\def\bd{\begin{defn}}
	\def\ed{\end{defn}}
\def\bp{\begin{proposition}}
	\def\ep{\end{proposition}}
\DeclareRobustCommand\widecheck[1]{{\mathpalette\@widecheck{#1}}}
\def\@widecheck#1#2{%
	\setbox\z@\hbox{\m@th$#1#2$}%
	\setbox\tw@\hbox{\m@th$#1%
		\widehat{%
			\vrule\@width\z@\@height\ht\z@
			\vrule\@height\z@\@width\wd\z@}$}%
	\dp\tw@-\ht\z@
	\@tempdima\ht\z@ \advance\@tempdima2\ht\tw@ \divide\@tempdima\thr@@
	\setbox\tw@\hbox{%
		\raise\@tempdima\hbox{\scalebox{1}[-1]{\lower\@tempdima\box
				\tw@}}}%
	{\ooalign{\box\tw@ \cr \box\z@}}}
\newcommand{\R}{\mathbb{R}} 
\newcommand{\Sp}{\mathbb{S}^{n-1}}
\DeclareMathOperator{\conv}{conv}
\DeclareMathOperator{\as}{as}
\DeclareMathOperator{\intt}{int}
\DeclareMathOperator{\hess}{Hess}
\DeclareMathOperator{\relint}{relint}
\DeclareMathOperator{\vol}{vol}
\DeclareMathOperator{\dom}{dom}
\newcommand{\fconc}{{\mbox{\rm Conc}(0,\infty)}}
\newcommand{\fconv}{{\mbox{\rm Conv}(0,\infty)}}
\newcommand*{\medcapplus}{\mathbin{\scalebox{1.25}{\ensuremath{\capplus}}}}%
\begin{document}
	\title[New fiber and graph combinations of convex bodies]{New fiber and graph combinations of convex bodies}
	
	\author{Steven Hoehner and Sudan Xing}
	\date{\today}
	
	\subjclass[2020]{52A20 (52A38, 52A40, 53A15)} 
	\keywords{Affine surface area, $L_p$ fiber sum,  graph sum, $L_p$ chord sum, $L_p$ Brunn--Minkowski inequality}
	
	\begin{abstract} 
	Three new combinations of convex bodies are introduced and studied: the $L_p$ fiber,  $L_p$ chord and  graph combinations. These combinations are defined in terms of the fibers and graphs of pairs of convex bodies, and each operation generalizes the classical Steiner symmetral, albeit in different ways. For the $L_p$ fiber and $L_p$ chord combinations, we derive Brunn--Minkowski-type inequalities and the corresponding Minkowski's first inequalities. We also prove that the general affine surface areas are concave (respectively, convex) with respect to the graph sum, thereby generalizing fundamental results of Ye ({\it Indiana Univ. Math. J.}, 2014) on the monotonicity of the general affine surface areas under Steiner symmetrization. As an application, we  deduce a corresponding Minkowski's first inequality for the $L_p$ affine surface area of a graph combination of convex bodies. 
	\end{abstract}
	
		\maketitle
	\section{Introduction}

 We shall work in $n$-dimensional Euclidean space $\R^n$ with inner product $\langle x,y\rangle=\sum_{i=1}^n x_i y_i$ and norm $\|x\|=\sqrt{\langle x,x\rangle}$, where $x,y\in\R^n$. The standard orthonormal basis of $\R^n$ is $\{e_1,\ldots,e_n\}$, and the origin of $\R^n$ is denoted by $o$. The Euclidean unit ball centered at the origin is  $B_2^n=\{x\in\R^n:\,\|x\|\leq 1\}$. Its boundary, the unit sphere in $\R^n$ centered at the origin, is denoted by $\Sp=\{x\in\R^n:\,\|x\|=1\}$. For a set $A\subset\R^n$, we let $\intt(A),\relint(A)$ and $\partial A$ denote the interior, relative interior, and boundary of $A$, respectively.
	
	A \emph{convex body} is a convex, compact subset of $\R^n$ with nonempty interior.  Throughout the paper, we let $\cK^n$  denote the class of convex bodies in $\R^n$, and we let $\cK_o^n=\{K\in\cK^n:\, o\in\intt(K)\}$. For $K_1,K_2\in\cK^n$, the Hausdorff distance $d_H(K_1,K_2)$ may be defined by
    \[
    d_H(K_1,K_2) = \max\left\{\sup_{x\in K_1}d(x,K_2),\,\sup_{y\in K_2}d(y,K_1)\right\}
    \]
    where for $x\in\R^n$ and a closed set $C\subset\R^n$, $d(x,C)=\inf_{y\in C}\|x-y\|$. The $n$-dimensional volume of $K\in\cK^n$ is denoted $\vol_n(K)$.
    
    For $K\in\cK^n$, the orthogonal projection of $K$  into the hyperplane $u^\perp=\{x\in\R^n:\,\langle x,u\rangle=0\}$ is denoted by $K|u^\perp$. The support function of a convex body $K$ in $\R^n$ is $h_K(u)=\max_{x\in K}\langle x,u\rangle$. For fixed $u\in\Sp$, we let $f_u: K|u^\perp \rightarrow \bbR$ and $g_u: K|u^\perp \rightarrow \bbR$ denote the
	{\it overgraph} and {\it undergraph} functions of $K$ with respect to $u^\perp$, respectively, which are defined by
	\begin{equation*}  
		f_u(x') =\max\{t\in \bbR: (x',t)\in (K|u^\perp)\times\R\} \quad\text{and}\quad
		g_u(x') =-\min\{t\in \bbR: (x',t)\in (K|u^\perp)\times \R\}.
	\end{equation*} 
 This means that 
  \[
 K = \left\{(x',t):\,x'\in K|u^\perp,\,-g_u(x')\leq t\leq f_u(x')\right\}.
 \]
For $x'\in u^\perp$, we denote by 
 \begin{align}
     \ell_K(u;x')&:=\vol_1(K\cap (x'+\R u))
 \end{align}
 the length of the chord of $K$ (in the direction of $u$) passing through $x'\in u^\perp$. Note that in terms of the overgraph and undergraph functions of $K$, we have
 \[
\ell_K(u;x') = f_u(x') + g_u(x').
 \]
 
	For a convex body $K$ in $\R^n$ and a direction $u\in\Sp$, the \emph{Steiner symmetral} $S_{u}(K)$ of $K$ with respect to the  hyperplane 
	$u^\perp$ may be defined as follows. Translate each chord of $K$ in the direction of $u$ until $u^\perp$ bisects the chord. The union of all the translated  chords is $S_u(K)$. In terms of the overgraph and undergraph of $K$, the Steiner symmetral $S_u(K)$ may be expressed as
	\begin{equation}\label{symmetrization:en}
		S_u(K)=\left\{(x', t)\in (K|u^\perp)\times\R:  -\frac{1}{2}(f_u(x')+g_u(x'))\leq t\leq
		\frac{1}{2}(f_u(x')+g_u(x'))\right\}.
	\end{equation} 
	
 A key feature of the Steiner symmetrization is that it preserves volume, i.e., $\vol_n(K)=\vol_n(S_u(K))$, which follows by Cavalieri's principle. Furthermore, any convex body can be transformed into a Euclidean ball of the same volume via a sequence of Steiner symmetrizations. These  properties make the Steiner symmetrization an indispensable tool in a wide variety of ``shape optimization" problems. For more background on Steiner symmetrizations, we refer the reader to, for example, Section 10.3 of Schneider's book \cite{Sch}, the article \cite{symm-in-geom} by Bianchi, Gardner and Gronchi and the chapter \cite{symm-chapter} by Bianchi and Gronchi.

	
	Various constructions have been introduced to interpret the Steiner symmetrization as a sum or combination of two objects (see, e.g., \cite{GHW-2013,symm-chapter,symm-in-geom,McMullen1999}). An example is the classical fiber sum of convex bodies, which was defined by McMullen \cite{McMullen1999} as follows. Fix two complementary linear subspaces  $L, M\subset \mathbb{R}^n$. Any vector $x \in \mathbb{R}^n$ can be decomposed  as $x=x'+y$, where $y \in L$ and $x' \in M$; we denote this orthogonal  decomposition by $x=(x', y)$.   For convex bodies $K_1$ and $K_2$ in $\R^n$ and $a,b\in\R$, McMullen \cite{McMullen1999} defined the \emph{fiber combination} $a\circ K_1\medcapplus b\circ K_2=a\circ_{L|M}K_1\medcapplus_{L|M}b\circ_{L|M} K_2$  by
 \begin{equation}\label{fiber-sum}
a\circ K_1\medcapplus_{L|M} b\circ K_2=\{(x',ay_1+by_2):\,(x',y_i)\in K_i\,\text{ for }\,i=1,2\},
 \end{equation}
 where for a convex body $K$ in $\R^n$ and $a\in\R$, the \emph{fiber dilation} $a\circ K=a\circ_{L|M}K$ is defined  by (see  \cite{McMullen1999})
 \[
a\circ K=\{(x',ay):\,(x',y)\in K\}.
 \]
Here we have adapted McMullen's notation to suit our purposes (see also \cite{symm-chapter,symm-in-geom}).	This definition means that vectors are only added in their $L$-components, while the $M$-components are fixed.   By construction, the fiber of the sum $K_1\medcapplus_{L|M} K_2$ in the ``direction" $L$ and passing through $x'\in M$  is obtained by summing the corresponding fibers of $K_1$ and $K_2$.  In particular, if  $L=\R^n$ and $M=\{o\}$, then \eqref{fiber-sum} becomes the usual Minkowski addition $K_1+K_2=\{x+y:\, x\in K_1, y\in K_2\}$, and if $L=\{o\}$ and $M=\R^n$, then \eqref{fiber-sum} is the usual intersection $K_1\cap K_2$.  Furthermore, as observed in \cite{McMullen1999}, if $L$ is a line and $M$ is its orthogonal hyperplane, then the fiber combination $(\frac{1}{2}\circ K)\medcapplus((-\frac{1}{2})\circ K)$
	is just the Steiner symmetral of $K$ with respect to $M$. Relative to the  complementary subspaces $L$ and $M$, the fiber sum operation is associative and commutative.  It was also shown in  \cite[Theorem 2.3]{McMullen1999}  that if $K_1$ and $K_2$ are convex, then $K_1\medcapplus K_2$ is also convex.

McMullen proved the following Brunn--Minkowski-type  inequality for fiber combinations of convex bodies \cite{McMullen1999}. Setting $\ell:=\dim L$, the result asserts that the $n$-dimensional volume $\vol_n$ is $\tfrac{1}{\ell}$-concave with respect to the fiber sum of convex bodies. More specifically, McMullen proved that for all convex bodies $K_1,K_2\in\cK^n$ and every $\vartheta\in[0,1]$, it holds that
	\begin{equation}\label{fiber-BMI}
		\vol_n((1-\vartheta)\circ K_1\medcapplus \vartheta\circ K_2)^{1/\ell} \geq (1-\vartheta) \vol_n(K_1)^{1/\ell}+\vartheta\vol_n(K_2)^{1/\ell}.
	\end{equation}
	To our knowledge, the equality cases have not yet been  characterized.  

    In \cite{symm-in-geom}, Bianchi, Gardner and Gronchi introduced and studied an abstract framework for symmetrizations in geometry, which includes the fiber sum as a special case (see also \cite{symm-chapter}). In particular, they studied general properties of these symmetrizations, including convergence of successive symmetrizations. The convergence properties of fiber symmetrizations were also recently studied by Ulivelli \cite{Ulivelli}. Some recent applications of fiber sums can also be found in \cite{HLPRY-1,HLPRY-2}. We also mention the related work \cite{GHW-2013} of Gardner, Hug and Weil, where abstract operations between sets in geometry were studied, as well as their general properties.

    	In this article, we introduce new combinations of convex bodies called the $L_p$ fiber,  $L_p$ chord and graph combinations. Our main results include Brunn--Minkowski-type inequalities for the $L_p$ fiber and $L_p$ chord combinations of convex bodies.  In terms of these operations, we introduce  mixed volumes via variational formulas, and we subsequently derive several isoperimetric-type inequalities.  
        
        These new operations are connected by the fact that they all generalize the classical Steiner symmetral, albeit in different ways.  In particular, we prove that the general $L_\phi$ (respectively $L_\psi$) affine surface areas are concave (respectively, convex) under the graph sum operation, thereby generalizing fundamental results of Ye \cite{Ye2013} on the monotonicity of the general affine surface areas under Steiner symmetrizations.  Consequently, we derive an analogue of Minkowski's first inequality for the $L_p$ affine surface area of a graph sum of convex bodies.

        Therefore, our focus is essentially  on various concavity properties of these three operations and applications thereof, which distinguishes this paper from the recent works \cite{symm-in-geom,GHW-2013,Ulivelli}, where general properties of an abstract framework of symmetrizations were studied. 


The remainder of the paper is essentially divided into thirds. First, in Section \ref{Lp-fiber-sec}, the $L_p$ fiber combination of convex bodies is introduced and studied. Next, Section \ref{graph-section} covers graph combinations, and  finally, Section \ref{Lp-chord-section} is devoted to  $L_p$ chord combinations.

        \section{$L_p$ fiber combinations of convex bodies}\label{Lp-fiber-sec}

        In this section, we define an $L_p$ analogue of McMullen's classical fiber sum. Let $L$ and $M$ be  complementary subspaces of $\R^n$, and let $\varphi_1,\varphi_2:\R^n\to\R$ be two convex functions. McMullen \cite{McMullen1999} defined the \emph{fiber convolute} $\varphi_1\boxplus\varphi_2=\varphi_1\boxplus_{L|M}\varphi_2$ relative to the subspace pair $(L,M)$ by
        \begin{equation}
            (\varphi_1\boxplus\varphi_2)(x',y):=\inf\{\varphi_1(x',y_1)+\varphi_2(x',y_2):\,y_1+y_2=y\}
        \end{equation}
        where again we denote $x=(x',y)$ with $x'\in M$ and $y\in L$. As remarked by McMullen \cite{McMullen1999}, in the case $L=\{o\}$ and $M=\R^n$, we recover the sum $\varphi_1+\varphi_2$, while for $L=\R^n$ and $M=\{o\}$ we recover the infimal convolute
        \[
        (\varphi_1\square\varphi_2)(x) = \inf\{\varphi_1(x)+\varphi_2(y-x):\,y\in\R^n\}.
        \]

        In  \cite[Lemma 8.3]{McMullen1999}, McMullen proved that the domain of the fiber convolute is $\dom(\varphi_1\boxplus_{L|M}\varphi_2)=\dom(\varphi_1)\medcapplus_{L|M}\dom(\varphi_2)$. Furthermore, he proved that the support function of the fiber sum satisfies the property
        \begin{equation}\label{fiber-sum-support-fn}
        h_{K_1\capplus_{L|M}K_2} = h_{K_1}\boxplus_{L^\perp|M^\perp}h_{K_2}.
        \end{equation}

        In the 1960s, Firey  \cite{Firey} extended the classical Minkowski combination of convex bodies to the $L_p$ setting as follows. For $p\geq 1$, $a,b\geq 0$ and $K_1,K_2\in\cK_o^n$, Firey defined the $L_p$ combination $a\cdot_p K_1+_p b\cdot_p K_2$ by the relation
	\begin{equation}\label{Lp-support}
		h_{a\cdot_p K_1+_p b\cdot_p K_2}=\left(a h_{K_1}^p+b h_{K_2}^p\right)^{1/p}.
	\end{equation}
    In particular, Firey also established the $L_p$ Brunn--Minkowski inequality and $L_p$ Minkowski's first inequality in \cite{Firey} (see Subsections \ref{LpBMI-sec} and \ref{LpMFI-sec} below).
    
    Firey's work \cite{Firey} was the starting point for the subsequent $L_p$ Brunn--Minkowski--Firey theory, which made a huge leap forward several decades later with the groundbreaking works \cite{lutwak, Lu1} of Lutwak. Since then, the $L_p$ Brunn--Minkowski--Firey theory has seen tremendous activity in Convex Geometry and Functional Analysis; for some examples, we refer the reader to  \cite{BLYZ-2012,ChenEtAl-2020, HaberlFranz2009, campi2002lp,haberl2008lp, lutwak2002sharp, lutwak2004lp, lutwak2000lp, LYZ,  Putterman-2021, Saraglou-2015,werner2008new}. 
    
   If $L$ and $M$ are complementary orthogonal subspaces of $\R^n$, then $L\oplus M=\R^n$, $M^\perp=L$, $L^\perp=M$ and $L\cap M=\{o\}$. Hence, in view of \eqref{fiber-sum-support-fn} and \eqref{Lp-support}, it is natural to define the following $L_p$ extension of McMullen's fiber combination.
        
	\begin{defn}\label{main-def-Lp-fiber-new}  
Let $L$ and $M$ be orthogonal complementary subspaces of $\R^n$, and let $K_1,K_2\in\cK_o^n$. For $p\in\R\setminus\{0\}$, the \emph{$L_p$ fiber sum} $ K_1\medcapplus_p K_2$ with respect to $(L,M)$ is defined by
    \begin{equation}
        h_{K_1\capplus_p  K_2} :=\left(h_{K_1}^p\boxplus_{L|M} h_{K_2}^p\right)^{\frac{1}{p}},
    \end{equation}
    and the \emph{$L_p$ fiber dilate} $a\circ_p K$ of $K\in\cK_o^n$ is defined by
 \begin{align}\label{1st-dilation}
     a\circ_p K :=a^{1/p}\circ K.
 \end{align}
 For  $a,b\geq 0$ which are not both zero, the \emph{$L_p$ fiber combination} $a\circ_p K_1\medcapplus_p b\circ_p K_2$ is then defined by
 \begin{equation}
     h_{a\circ_p K_1\capplus_p b\circ_p K_2} :=\left(h_{a\circ_p K_1}^p\boxplus_{L|M} h_{b\circ_p K_2}^p\right)^{\frac{1}{p}}.
 \end{equation}
	\end{defn}
In particular, choosing $p=1$,  we recover the classical fiber sum and dilation operations of McMullen in \eqref{fiber-sum-support-fn}; in this case,  we often drop the subscripts and write $\medcapplus=\medcapplus_1$ and $\circ=\circ_1$.  

 Let $\cdot\,|M$ denote the projection  into $M$ in the ``direction" $L$. Observe that if $(K_1|M)\cap(K_2|M)=\varnothing$,   then $K_1\medcapplus_p K_2=\varnothing$. Moreover, if $\relint(K_1|M)\cap\relint(K_2|M)=\varnothing$, then $K_1\medcapplus_p K_2$ may have dimension less than $n$ (and thus empty interior). Therefore, we shall henceforth assume that   $\relint(K_1|M)\cap\relint(K_2|M)\neq\varnothing$ when dealing with the $L_p$ fiber sum $K_1\medcapplus_p K_2$. 

\subsection{Examples of $L_p$ fiber sums} 

Consider the planar convex bodies 
\begin{align*}
K_1&=[-1,1]\times[-1,1]=\{(x_1,x_2)\in\R^2:\, -1\leq x_1,x_2\leq 1\}\\
K_2&=B_2^2=\{(x_1,x_2)\in\R^2:\, x_1^2+x_2^2\leq 1\}. 
\end{align*}
Let $L=\R e_2$ and $M=\R e_1$. Note that $L$ and $M$ are complementary orthogonal subspaces of $\R^2$ and that $\relint(K_1|M)\cap\relint(K_2|M)=(-1,1)$ is nonempty. We have $h_{K_1}(x_1,x_2)=|x_1|+|x_2|$ and $h_{K_2}(x_1,x_2)=\sqrt{x_1^2+x_2^2}$, where $x_1\in M$ and $x_2\in L$. Hence, for any $p\in\R\setminus\{0\}$,
\begin{equation}
\begin{split}
h_{K_1\capplus_p K_2}(x_1,x_2) &= \left(\inf_{z\in \R}\left\{h_{K_1}^p(x_1,z)+h_{K_2}^p(x_1,x_2-z)\right\}\right)^{\frac{1}{p}}\\
&=\left(\inf_{z\in \R}\left\{(|x_1|+|z|)^p+(x_1^2+(x_2-z)^2)^{\frac{p}{2}}\right\}\right)^{\frac{1}{p}}.
\end{split}
\end{equation}

Let $G_{p,x_1,x_2}(z):=(|x_1|+|z|)^p+(x_1^2+(x_2-z)^2)^{\frac{p}{2}}$. For $p=1$, we have $G_{1,x_1,x_2}(z)=|x_1|+|z|+\sqrt{x_1^2+(x_2-z)^2}$. If $x_1=0$, then $G_{1,0,x_2}(z)=|z|+|x_2-z|$, which is the sum of distances from 0 to $z$ and from $x_2$ to $z$. This sum is minimized by any $z\in[0,x_2]$, and hence $\inf_{z\in \R}G_{1,x_1,x_2}(z)=|x_2|$. If $x_1\neq 0$, then a  straightforward computation shows that $\inf_{z\in\R}G_{1,x_1,x_2}(z)=G_{1,x_1,x_2}(0)=|x_1|+\sqrt{x_1^2+x_2^2}$. Therefore, for $p=1$,
\begin{equation}\label{example-support-fn-1}\begin{split}
h_{K_1\capplus_1 K_2}(x_1,x_2)=\inf_{z\in L}G_{1,x_1,x_2}(z)&=\begin{cases}
    |x_2|, &\text{if } x_1=0;\\
    |x_1|+\sqrt{x_1^2+x_2^2}, &\text{if }x_1\neq 0
\end{cases}\\
    &=|x_1|+\sqrt{x_1^2+x_2^2}.
\end{split}
 \end{equation}
 Note that $[-e_1,e_1]+B_2^2$ has the same support function. Since support functions uniquely characterize convex bodies, this implies that $K\medcapplus_1 K_2=[-e_1,e_1]+B_2^2$. The set $K_1\medcapplus_1 K_2$ is illustrated in Figure 1 below.

 For $p=2$, a similar computation shows that 
 \begin{equation}\label{example-support-fn-2}
h_{K_1\capplus_2 K_2}(x_1,x_2) = \begin{cases}
    \sqrt{\frac{1}{2}(|x_1|+|x_2|)^2+x_1^2}, &\text{if }|x_2|\geq |x_1|;\\
    \sqrt{2x_1^2+x_2^2}, &\text{if }|x_2|<|x_1|.
\end{cases}
 \end{equation}

\begin{center}
\begin{tikzpicture}[scale=1.75]
    \draw[->] (-2.5,0) -- (2.5,0) node[right] {$x_1$};
    \draw[->] (0,-1.5) -- (0,1.5) node[above] {$x_2$};

    \node at (0,0) [below left] {$o$};


    \fill[blue!30, opacity=0.5]
        (1, -1) arc (-90:90:1) -- 
        (-1, 1) arc (90:270:1) -- cycle; 

    \draw[blue, thick] (1, -1) arc (-90:90:1); 
    \draw[blue, thick] (-1, 1) arc (90:270:1); 

    \draw[blue, thick] (1, -1) -- (-1, -1);
    \draw[blue, thick] (1, 1) -- (-1, 1);

    \node[blue, font=\normalsize] at (2.4,0.8) {$K_1\medcapplus_1 K_2$};

    \fill (2,0) circle (1pt) node[below right, font=\small] {$(2,0)$};
    \fill (-2,0) circle (1pt) node[below left, font=\small] {$(-2,0)$};
    \fill (0,1) circle (1pt) node[above right, font=\small] {$(0,1)$};
    \fill (0,-1) circle (1pt) node[below right, font=\small] {$(0,-1)$};
\end{tikzpicture}
\end{center}
\noindent{\footnotesize{\bf Figure 1:} The fiber sum $K_1\medcapplus_1 K_2$ of $K_1=[-1,1]\times[-1,1]$ and $K_2=B_2^2$ is shaped like a racetrack.}

    \subsection{Properties of  $L_p$ fiber combinations}

    We begin with a useful characterization of the support function of an $L_p$ fiber dilation.
    
    \begin{lemma}\label{fiber-dilation-support}
        Let $K\in\cK_o^n$, $a>0$ and $p\in\R\setminus\{0\}$. Then
        \[
            h_{a\circ_p K}(x',y)=h_K(x',a^{1/p}y).
        \]
    \end{lemma}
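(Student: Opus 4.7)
The plan is to reduce the statement to the definition of the support function and then use the orthogonality of $L$ and $M$. First I would unpack the notation: by \eqref{1st-dilation}, $a\circ_p K = a^{1/p}\circ K$, and by McMullen's fiber dilation, this set is exactly
\[
a\circ_p K = \{(x_0, a^{1/p}y_0) : (x_0,y_0) \in K\},
\]
where $(x_0,y_0)$ denotes the unique orthogonal decomposition of a point of $\R^n$ relative to the complementary pair $(L,M)$, with $x_0 \in M$ and $y_0 \in L$. Note that $a>0$ guarantees $a^{1/p}\in(0,\infty)$ for every $p\in\R\setminus\{0\}$, so no case distinction on the sign of $p$ is needed.

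Next I would write down $h_{a\circ_p K}(x',y)$ directly from the definition $h_C(u)=\sup_{z\in C}\langle z,u\rangle$ of the support function:
\[
h_{a\circ_p K}(x',y) = \sup_{(x_0,y_0)\in K}\bigl\langle (x_0, a^{1/p}y_0), (x',y)\bigr\rangle.
\]
Because $L\perp M$, the inner product on $\R^n$ splits along this decomposition, so
\[
\bigl\langle (x_0, a^{1/p}y_0), (x',y)\bigr\rangle = \langle x_0,x'\rangle + a^{1/p}\langle y_0,y\rangle = \bigl\langle (x_0,y_0), (x', a^{1/p}y)\bigr\rangle,
\]
where the scalar $a^{1/p}$ has simply been transferred from the first argument to the second. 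Taking the supremum over $(x_0,y_0)\in K$ then yields $h_K(x', a^{1/p}y)$, which is the desired identity.

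There is essentially no substantive obstacle here: the identity is a bookkeeping consequence of the definitions. The only place where care is required is the splitting of the inner product, which genuinely uses the standing hypothesis that $L$ and $M$ are \emph{orthogonal} complementary subspaces; without orthogonality the transfer of $a^{1/p}$ between the two arguments would pick up cross terms and the clean formula would fail.
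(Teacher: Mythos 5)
Your proof is correct and takes essentially the same route as the paper's: unwind the definition of the fiber dilation, write out the support function as a supremum, and transfer the scalar $a^{1/p}$ from one slot of the inner product to the other using orthogonality of $L$ and $M$. The only cosmetic difference is that the paper first treats $p=1$ and then appeals to \eqref{1st-dilation}, whereas you fold both steps into a single computation.
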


    \begin{proof}
        First, assume that $p=1$. Observe that for any $a>0$, we have $(x',aw)\in a\circ K$ if and only if $(x',w)\in K$ (where $x'\in M$ and $w\in L$). Thus,
        \begin{align*}
            h_{a\circ K}(x',y) &=\sup_{(z',aw)\in a\circ K}\left\langle(z',aw),(x',y)\right\rangle = \sup_{(z',w)\in K}\left\langle(z',aw),(x',y)\right\rangle\\
            &= \sup_{(z',w)\in K}\left\{\langle z',x'\rangle+\langle aw,y\rangle\right\}
            =\sup_{(z',w)\in K}\left\{\langle z',x'\rangle+\langle w,ay\rangle\right\}\\
            &=\sup_{(z',w)\in K}\left\langle(z',w),(x',ay)\right\rangle
            =h_K(x',ay).
        \end{align*}
    The general case for $p\in\R\setminus\{0\}$ now follows from \eqref{1st-dilation} and the preceding computation:
    \[
    h_{a\circ_p K}(x',y)=h_{a^{1/p}\circ K}(x',y)=h_K(x',a^{1/p}y).
    \]
    \end{proof}

    Next, we highlight some basic properties of $L_p$ fiber combinations.

    \begin{proposition}\label{Lp-fiber-props}
        Let $L$ and $M$ be  complementary orthogonal subspaces of $\R^n$, and let $\ell:=\dim L$. Let $p\in\R\setminus \{0\}$. Let $K_1,K_2\in\cK_o^n$ and $a,b\geq 0$. Then: 
       \begin{itemize}
			\item [(i)] (Shadow property)  if $a$ and $b$  are not both 0, we have 
			\[
   (a\circ_p K_1\medcapplus_p b\circ_p K_2)|M=(K_1|M)\cap(K_2|M).
   \]

   \item[(ii)] 	(Commutativity)  It holds that  \[K_1\medcapplus_p K_2=K_2\medcapplus_p K_1.\]
			
			\item [(iii)] (Associativity of sum) if $K_3$ is also a convex body in $\mathcal{K}_o^n$, we have
			$$(K_1\medcapplus_p K_2)\medcapplus_p K_3=K_1\medcapplus_p (K_2\medcapplus_p  K_3).$$ 	
			
			\item [(iv)] (Associativity of dilation) It holds that  \[(ab)\circ_p K=	a\circ_p 	(b\circ_p K).\]
             
            \item [(v)] (Distributivity) We have
            \begin{align*}
                a\circ_p (K_1\medcapplus_p K_2)&=a\circ_p K_1\medcapplus_p a\circ_p K_2.
            \end{align*}
			
			\item[(vi)] (Monotonicity) if $L_1,L_2\in\mathcal{K}_o^n$ are such that $K_1\subset L_1$ and $K_2\subset L_2$, then we have 
			$$a\circ_p K_1\medcapplus_p b\circ_p K_2\subset a\circ_p L_1\medcapplus_p b\circ_p L_2.$$   

		\end{itemize}
    \end{proposition}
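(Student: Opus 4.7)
The plan is to verify each item by working directly with support functions via Definition~\ref{main-def-Lp-fiber-new} and Lemma~\ref{fiber-dilation-support}, reducing every claim to an algebraic identity or inequality for the fiber convolute $\boxplus_{L|M}$.

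Items (ii), (iii) and (iv) fall out quickly. Commutativity (ii) is inherited from the symmetry of the infimum defining $\boxplus_{L|M}$ in the pair $(\varphi_1,\varphi_2)$. Associativity of the sum (iii) follows by unraveling both $(h_{K_1}^p\boxplus_{L|M} h_{K_2}^p)\boxplus_{L|M} h_{K_3}^p$ and $h_{K_1}^p\boxplus_{L|M}(h_{K_2}^p\boxplus_{L|M} h_{K_3}^p)$ and observing that each equals
\[
\Big(\inf\{h_{K_1}^p(x',y_1)+h_{K_2}^p(x',y_2)+h_{K_3}^p(x',y_3):\,y_1+y_2+y_3=y\}\Big)^{1/p}.
\]
Associativity of dilation (iv) is a one-line application of Lemma~\ref{fiber-dilation-support}, since both $(ab)\circ_p K$ and $a\circ_p(b\circ_p K)$ have support function $h_K(x',a^{1/p}b^{1/p}y)$.

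For item (v), distributivity, I would compute both support functions via Lemma~\ref{fiber-dilation-support} and Definition~\ref{main-def-Lp-fiber-new}, obtaining
\[
h_{a\circ_p(K_1\medcapplus_p K_2)}^p(x',y)=\inf\{h_{K_1}^p(x',z_1)+h_{K_2}^p(x',z_2):\,z_1+z_2=a^{1/p}y\}
\]
and
\[
h_{a\circ_p K_1\medcapplus_p a\circ_p K_2}^p(x',y)=\inf\{h_{K_1}^p(x',a^{1/p}y_1)+h_{K_2}^p(x',a^{1/p}y_2):\,y_1+y_2=y\},
\]
and then match them through the substitution $z_i=a^{1/p}y_i$, which bijects the two constraint sets. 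Item (vi), monotonicity, follows because $K_i\subset L_i$ forces $h_{K_i}\leq h_{L_i}$ pointwise, Lemma~\ref{fiber-dilation-support} extends this to $h_{a\circ_p K_i}\leq h_{a\circ_p L_i}$ under dilation, and the fiber convolute is monotone in each argument; raising to $1/p$ and using the duality between convex bodies and support functions then yields the set inclusion, with the sign of $p$ dictating the direction of the intermediate $p$-th power inequality.

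The most delicate item is the shadow property (i). Since dilation only affects the $L$-component, $(a\circ_p K)|M=K|M$, which reduces matters to the case $a=b=1$. The inclusion $(K_1\medcapplus_p K_2)|M\subseteq(K_1|M)\cap(K_2|M)$ then follows from McMullen's domain computation \cite[Lemma~8.3]{McMullen1999} for $\boxplus_{L|M}$, together with the fact that raising to the $p$-th power does not alter the effective domain of a support function of a body in $\cK_o^n$: if $(x',y)$ lies in the domain of $h_{K_1}^p\boxplus_{L|M} h_{K_2}^p$, then $x'$ must lie in both $K_1|M$ and $K_2|M$. For the reverse inclusion, for each $x'\in(K_1|M)\cap(K_2|M)$ one must produce an explicit element of the $L_p$ fiber sum projecting to $x'$, using the hypothesis $\relint(K_1|M)\cap\relint(K_2|M)\neq\varnothing$ to ensure nondegeneracy. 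The main obstacle is precisely this reverse inclusion: unlike the classical case $p=1$, the $L_p$ fiber sum lacks a clean pointwise set-theoretic description, so one cannot simply write down the point as in McMullen's construction and must instead argue indirectly through the support function and the structure of $\boxplus_{L|M}$.
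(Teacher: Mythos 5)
Your handling of items (ii)--(v) matches the paper's proof, which computes with the defining infima for the fiber convolute exactly as you describe; your route for (iv) via Lemma~\ref{fiber-dilation-support} is the same computation stated at the support-function level. The paper omits a proof of (vi), and your sketch is correct: $K_i\subset L_i$ gives $h_{K_i}\le h_{L_i}$, Lemma~\ref{fiber-dilation-support} carries this through the dilation, the fiber convolute is monotone in each argument, and the two sign reversals coming from $t\mapsto t^p$ and $t\mapsto t^{1/p}$ cancel when $p<0$.

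The weak point is (i). You are right that the paper's claim that (i) ``follows directly from Definition~\ref{main-def-Lp-fiber-new}'' is a gloss, and that the reverse inclusion is the delicate part. But the route you propose --- invoking McMullen's domain computation \cite[Lemma~8.3]{McMullen1999} --- does not get traction here: for $K\in\cK_o^n$ the support function $h_K^p$ is finite on all of $\R^n$, so the fiber convolute has full effective domain, and this says nothing about the projection of the body. What is actually at stake is the identity
\begin{equation*}
h_{K_1\medcapplus_p K_2}(x',0)=h_{(K_1|M)\cap(K_2|M)}(x'),\qquad x'\in M,
\end{equation*}
and, reading it off the definition, the left-hand side is the $p$-th infimal convolution $\bigl(h_{K_1|M}^p\,\square\,h_{K_2|M}^p\bigr)^{1/p}$ over $M$. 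For $p=1$ this equals $h_{K_1|M}\,\square\,h_{K_2|M}=h_{(K_1|M)\cap(K_2|M)}$ and the classical shadow property follows; for $p\neq 1$ the two sides generally differ. Concretely, taking $K_1=K_2=B^n$, a short computation gives that the left-hand side is a constant multiple of $|x'|$ different from $1$, so $(K_1\medcapplus_p K_2)|M$ is a nontrivial dilate of $B^n\cap M$ rather than $B^n\cap M$ itself. So (i) cannot be settled by the tools you cite, and the statement deserves more scrutiny than either you or the paper give it; at minimum a genuinely new argument (or a rescaled statement) is required.
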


    \begin{proof}
    Properties (i) and (ii) follow directly from Definition \ref{main-def-Lp-fiber-new}. To see (iii), note that
    \begin{align*}
        h_{K_1\capplus_p(K_2\capplus_p K_3)}^p(x',y) &=\inf_{y_1+y_2=y}\left\{h_{K_1}^p(x',y_1)+h_{K_2\capplus_p K_3}^p(x',y_2)\right\}\\
        &=\inf_{y_1+y_2=y}\left\{h_{K_1}^p(x',y_1)+\inf_{z_1+z_2=y_2}\left\{h_{K_2}^p(x',z_1)+h_{K_3}^p(x',z_2)\right\}\right\}\\
        &=\inf_{\substack{y_1+y_2=y\\z_1+z_2=y_2}}\left\{h_{K_1}^p(x',y_1)+h_{K_2}^p(x',z_1)+h_{K_3}^p(x',z_2)\right\}\\
        &=\inf_{y_1+z_1+z_2=y}\left\{h_{K_1}^p(x',y_1)+h_{K_2}^p(x',z_1)+h_{K_3}^p(x',z_2)\right\}\\
&=\inf_{\substack{w+z_2=y\\y_1+z_1=w}}\left\{h_{K_1}^p(x',y_1)+h_{K_2}^p(x',z_1)+h_{K_3}^p(x',z_2)\right\}\\
&=\inf_{w+z_2=y}\left\{\inf_{y_1+z_1=w}\left\{h_{K_1}^p(x',y_1)+h_{K_2}^p(x',z_1)\right\}+h_{K_3}^p(x',z_2)\right\}\\
&=\inf_{w+z_2=y}\left\{h_{K_1\capplus_p K_2}^p(x',w)+h_{K_3}^p(x',z_2)\right\}\\
&=h_{(K_1\capplus_p K_2)\capplus_p K_3}^p(x',y).
    \end{align*}
    The identity in (iii) follows.

    Next, observe that the classical fiber dilation is associative:
    \begin{align*}
(ab)\circ K&=\{(x',aby):\,(x',y)\in K\}
=a\circ\{(x',by):\,(x',y)\in K\}=a\circ (b\circ K).
    \end{align*}
Part (iv) thus follows from this fact and \eqref{1st-dilation}. For (v), by Lemma \ref{fiber-dilation-support} we have
\begin{align*}
    h_{a\circ_p(K_1\capplus_p K_2)}(x',y) &=h_{K_1\capplus_p K_2}(x',a^{1/p}y)\\
    &=\left(\inf\left\{h_{K_1}^p(x',y_1)+h_{K_2}^p(x',y_2):\,y_1+y_2=a^{1/p}y\right\}\right)^{1/p}\\
    &=\left(\inf\left\{h_{K_1}^p(x',a^{1/p}y_1)+h_{K_2}^p(x',a^{1/p}y_2):\,y_1+y_2=y\right\}\right)^{1/p}\\
    &=\left(\inf\left\{h_{a\circ_p K_1}^p(x',y_1)+h_{a\circ_p K_2}^p(x',y_2):\,y_1+y_2=y\right\}\right)^{1/p}\\
    &=h_{(a\circ_p K_1)\capplus_p(a\circ_p K_2)}(x',y).
\end{align*}
This implies the desired identity.
  \end{proof}

      \br
	The corresponding properties (i)-(iii) for  the classical fiber combination  were shown by McMullen in \cite{McMullen1999}. 
	\er

The next result states that $n$-dimensional volume is $\frac{\ell}{p}$-homogeneous with respect to the $L_p$ fiber dilation. 

    \begin{lemma}\label{Lp-sum-homogeneity}
        Let $K\in\cK_o^n$, $p\in\R\setminus\{0\}$ and $a\geq 0$. Then
        \[
\vol_n(a\circ_p K) = a^{\ell/p}\vol_n(K).
        \]
    \end{lemma}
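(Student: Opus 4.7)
The plan is to reduce to the case of the classical fiber dilation $c \circ K$ for $c > 0$, for which the statement $\vol_n(c \circ K) = c^{\ell} \vol_n(K)$ should follow directly from Fubini's theorem. Specifically, using \eqref{1st-dilation} with $c = a^{1/p}$, once we establish $\vol_n(c \circ K) = c^{\ell} \vol_n(K)$, the claim follows from
\[
\vol_n(a \circ_p K) = \vol_n(a^{1/p} \circ K) = (a^{1/p})^{\ell} \vol_n(K) = a^{\ell/p} \vol_n(K).
\]

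To handle the classical case, I would first fix an orthonormal basis of $\R^n$ adapted to the orthogonal decomposition $\R^n = M \oplus L$, so that every $x \in \R^n$ is written uniquely as $x = (x', y)$ with $x' \in M$ and $y \in L$. For each $x' \in K|M$, denote by $K(x') := \{y \in L : (x',y) \in K\}$ the fiber of $K$ over $x'$, which is a compact convex subset of $L \cong \R^{\ell}$. By the definition of the fiber dilation, the fiber of $c \circ K$ over the same point $x'$ is precisely $c \cdot K(x')$, so its $\ell$-dimensional Lebesgue measure equals $c^{\ell} \vol_{\ell}(K(x'))$.

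Next, I would apply Fubini's theorem to express
\[
\vol_n(c \circ K) = \int_{K|M} \vol_{\ell}(c \cdot K(x'))\, dx' = c^{\ell} \int_{K|M} \vol_{\ell}(K(x'))\, dx' = c^{\ell} \vol_n(K),
\]
where the last equality again uses Fubini together with the shadow property (part (i) of Proposition \ref{Lp-fiber-props}), which ensures $(c \circ K)|M = K|M$ for $c > 0$. Combining this with the reduction above yields the desired homogeneity.

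I do not expect any substantive obstacle here: the argument is essentially Cavalieri's principle applied in the $L$-direction, and the only conceptual point is to verify that the projection onto $M$ is unchanged by the fiber dilation (so that the outer integral is over the same base), which is immediate from the definition $c \circ K = \{(x', cy) : (x', y) \in K\}$.
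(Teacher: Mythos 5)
Your proposal is correct and matches the paper's proof in its essential mechanism: write $\vol_n$ of the dilate as an integral over $K|M$ of the $\ell$-dimensional volumes of the fibers, note that each fiber of $a\circ_p K$ is the $a^{1/p}$-scaling of the corresponding fiber of $K$, and pull out the factor $a^{\ell/p}$. The only cosmetic difference is that the paper carries a factor $\theta(L,M)$ (the angle between the subspaces) to cover a non-orthogonal decomposition, whereas you work in the orthogonal case of Definition \ref{main-def-Lp-fiber-new} where that factor is $1$; you also split the computation into a reduction to $p=1$ followed by the classical scaling identity, which the paper does in a single chain of equalities.
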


    \begin{proof}
        For $x'\in K|M$, let $K(x')=\{t\in L:\,(x',t)\in K\}$ denote the fiber of $K$ over $x'$. Let $\theta(L,M)$ denote the angle between $L$ and $M$. Then
            \begin{align*}
\vol_n(a\circ_p K) &=\theta(L,M)\int_{(a\circ_p K)|M}\vol_\ell\left((a\circ_p K)(x')\right)dx'\\
                &=\theta(L,M)\int_{K|M}\vol_\ell\left(a^{1/p}K(x')\right)dx'\\
                &=a^{\ell/p}\theta(L,M)\int_{K|M}\vol_\ell\left(K(x')\right)dx'\\
                &=a^{\ell/p}\vol_n(K).
            \end{align*}
    \end{proof}

\subsection{The $L_p$ fiber sum preserves convexity for $p\geq 1$}
 
 Next, we show that when $p\geq 1$, the $L_p$ fiber sum is a binary operation on $\cK_o^n$. 

    \begin{theorem}\label{Lp-convexity-lemma}
        For all $K_1,K_2\in\cK_o^n$ and $p\geq 1$, we have $K_1\medcapplus_p K_2\in\cK_o^n$. 
    \end{theorem}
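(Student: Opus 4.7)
The plan is to show that the function
\begin{equation*}
h(x',y) := \left(\inf_{y_1+y_2=y}\bigl\{h_{K_1}^p(x',y_1) + h_{K_2}^p(x',y_2)\bigr\}\right)^{1/p}
\end{equation*}
(with $x'\in M$, $y\in L$) is sublinear on $\R^n=M\oplus L$ and satisfies two-sided bounds $c\|(x',y)\| \leq h(x',y) \leq C\|(x',y)\|$ for some constants $0<c\leq C<\infty$. By the classical characterization of support functions, this will identify $h$ as the support function of a unique element of $\cK_o^n$, which is precisely $K_1\medcapplus_p K_2$. Positive $1$-homogeneity of $h$ is immediate: for $\lambda>0$, substituting $y_i\mapsto\lambda y_i$ in the infimum defining $h^p(\lambda x',\lambda y)$ and invoking the homogeneity of each $h_{K_i}$ yields $h(\lambda x',\lambda y)=\lambda h(x',y)$.

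The main step, where the hypothesis $p\geq 1$ enters essentially, is subadditivity. Given $(x_1',y_1),(x_2',y_2)\in\R^n$ and $\varepsilon>0$, I would first choose near-optimal splittings $y_i=u_i+v_i$ so that $h_{K_1}^p(x_i',u_i)+h_{K_2}^p(x_i',v_i)\leq h^p(x_i',y_i)+\varepsilon$ for $i=1,2$. Since $(u_1+u_2)+(v_1+v_2)=y_1+y_2$, the pair $(u_1+u_2,v_1+v_2)$ is admissible in the infimum defining $h^p(x_1'+x_2',y_1+y_2)$, giving
\begin{equation*}
h^p(x_1'+x_2',y_1+y_2)\leq h_{K_1}^p(x_1'+x_2',u_1+u_2)+h_{K_2}^p(x_1'+x_2',v_1+v_2).
\end{equation*}
Applying the subadditivity of each $h_{K_j}$ (whose values are non-negative since $o\in\intt K_j$), followed by the Minkowski inequality in $\R^2$,
\begin{equation*}
\bigl[(a_1+a_2)^p+(b_1+b_2)^p\bigr]^{1/p}\leq (a_1^p+b_1^p)^{1/p}+(a_2^p+b_2^p)^{1/p}\qquad (p\geq 1,\, a_i,b_i\geq 0),
\end{equation*}
and letting $\varepsilon\to 0$ yields $h(x_1'+x_2',y_1+y_2)\leq h(x_1',y_1)+h(x_2',y_2)$.

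Finally, the two-sided bounds on $h$ follow from the sandwich $rB_2^n\subset K_i\subset RB_2^n$, which gives $r\|x\|\leq h_{K_i}(x)\leq R\|x\|$. The upper bound $h(x',y)\leq C\|(x',y)\|$ comes by taking $y_1=y_2=y/2$ in the infimum, while the lower bound follows from a short case analysis showing that $\max(\|(x',y_1)\|,\|(x',y_2)\|)\geq \|(x',y)\|/(2\sqrt{2})$ for every admissible splitting, so that $h_{K_1}^p(x',y_1)+h_{K_2}^p(x',y_2)\geq c^p\|(x',y)\|^p$ for a fixed $c>0$. The subadditivity step is the main obstacle: the restriction $p\geq 1$ is used precisely through the Minkowski inequality, which fails (or reverses) for $p<1$; the other verifications are routine applications of standard properties of support functions.
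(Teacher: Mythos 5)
Your argument is correct and follows essentially the same path as the paper's: positive homogeneity from that of $h_{K_i}$, and subadditivity via the scalar Minkowski inequality in $\R^2$ combined with subadditivity of the individual support functions. The one technical difference is how you handle the infimum: the paper invokes a small lemma stating $g(\inf A)=\inf g(A)$ for continuous increasing $g$ to pull the $1/p$-th power inside the infimum, whereas you choose $\varepsilon$-near-optimal splittings and pass to the limit. These are interchangeable.

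One genuinely useful thing you add is the explicit two-sided bound $c\|x\|\leq h(x)\leq C\|x\|$. The paper's proof only verifies nonnegativity of $h$, which shows the resulting body contains $o$ but does not by itself show $o$ lies in its interior — for that one needs strict positivity of $h$ on $\Sp$, which is exactly what your lower bound establishes. Your observation that any admissible splitting $y=y_1+y_2$ forces $\max(\|(x',y_1)\|,\|(x',y_2)\|)\gtrsim\|(x',y)\|$ (since one of the $y_i$ satisfies $\|y_i\|\geq\|y\|/2$) is the right way to make this rigorous and actually gives the cleaner constant $r/2$ rather than $1/(2\sqrt{2})$. So your proof is, if anything, a bit more careful than the paper's in verifying membership in $\cK_o^n$ as opposed to the larger class of convex bodies merely containing the origin.
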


    To prove this, we will need the following 

    \begin{lemma}\label{inf-lowerbd}
        Let $g:\R\to\R$ be a continuous and  increasing function, and let $A$ be a subset of $\R$. If $A$ is bounded below, then $g(\inf A)=\inf g(A)$.
    \end{lemma}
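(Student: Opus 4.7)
The plan is to establish the two inequalities $g(\inf A) \leq \inf g(A)$ and $\inf g(A) \leq g(\inf A)$ separately. Set $m := \inf A$, which exists as a real number since $A$ is nonempty and bounded below.

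First I would prove $g(m) \leq \inf g(A)$ using only monotonicity. For every $a \in A$, we have $a \geq m$, and since $g$ is increasing this forces $g(a) \geq g(m)$. Hence $g(m)$ is a lower bound for the set $g(A)$, and taking the infimum over $g(A)$ yields the desired inequality. Note that continuity of $g$ is not needed at this stage.

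For the reverse inequality $\inf g(A) \leq g(m)$, I would invoke continuity together with the approximation property of the infimum. By definition of $m = \inf A$, there exists a sequence $(a_n) \subset A$ with $a_n \to m$. Since $g$ is continuous, $g(a_n) \to g(m)$. Each $g(a_n)$ lies in $g(A)$, so $\inf g(A) \leq g(a_n)$ for every $n$, and passing to the limit gives $\inf g(A) \leq g(m)$.

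There is no serious obstacle: the argument is a routine combination of monotonicity (for the ``easy'' direction) with continuity (which handles the case where the infimum $m$ need not be attained in $A$). The only mild subtlety worth flagging is that ``increasing'' is being used in the weak sense $x \leq y \Rightarrow g(x) \leq g(y)$, which is exactly what the proof requires; if the hypothesis were strict monotonicity the same proof would apply verbatim.
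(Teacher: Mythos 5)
Your proof is correct and follows essentially the same strategy as the paper: prove $g(\inf A)\leq \inf g(A)$ from monotonicity alone, then use continuity for the reverse inequality. The only cosmetic difference is that you phrase the continuity step sequentially (pick $a_n\to\inf A$ and pass to the limit), whereas the paper runs an $\varepsilon$--$\delta$ argument directly on the set $(a-\delta,a+\delta)\cap A$; your sequential version is arguably a bit cleaner, since the paper's displayed inclusion $(g(a)-\varepsilon,g(a)+\varepsilon)\subset g(A)$ is not literally what the argument needs (the intended conclusion is just that $g(A)$ meets $(g(a)-\varepsilon,g(a)+\varepsilon)$).
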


\begin{proof}
    A proof can be found, for example, in \cite{inf-proof}; we include the details for the reader's convenience. Since $A$ is bounded below, $\inf A$ exists. Let $a=\inf A$. On one hand, since $g$ is increasing, for every $x\in A$ we have $g(a)\leq g(x)$, which implies $g(\inf A)\leq \inf g(A)$.  On the other hand, by the definition of infimum, for every $\delta>0$ the set $(a-\delta,a+\delta)\cap A$ is nonempty, which implies that $g((a-\delta,a+\delta)\cap A)$ is nonempty. Since $g$ is continuous, for all $\varepsilon>0$ there exists $\delta>0$ such that 
    \[
    g((a-\delta,a+\delta)\cap A)\subset (g(a)-\varepsilon,g(a)+\varepsilon)\subset g(A). 
    \]
    Therefore, $\inf g(A)\leq g(a)=g(\inf A)$.
\end{proof}

    \begin{proof}[Proof of Theorem \ref{Lp-convexity-lemma}]
        The proof is inspired by Firey's original proof in \cite{Firey} for the $p$-means of convex bodies (see also \cite[Section 9.1]{Sch}). Recall that the support function of a convex body is characterized by the three properties of nonnegativity, positive homogeneity and sublinearity (see, e.g., \cite[Section 1.7]{Sch}). Consider the function defined by
        \[
f(x):=\left(h_{K_1}^p(x)\boxplus_{L|M} h_{K_2}^p(x)\right)^{\frac{1}{p}},\qquad x\in\R^n.
        \]
        We shall show that this function satisfies these three properties and is therefore  the support function of a convex body in $\R^n$.
        
        First, as support functions, $h_{K_1},h_{K_2}\geq 0$, and hence
        \[
f(x)=(h_{K_1}^p(x',y)\boxplus_{L|M}h_{K_2}^p(x',y))^{1/p}=\left(\inf\left\{h_{K_1}(x',y_1)^p+h_{K_2}(x',y_2)^p:\,y_1+y_2=y\right\}\right)^{1/p} \geq 0,
        \]
        where $x=(x',y)$. 
        
        Next, for $\lambda>0$, by the positive homogeneity of the support functions $h_{K_1}$ and $h_{K_2}$, we  have
        \begin{align*}
            f(\lambda x) &=\left(h_{K_1}^p(\lambda x)\boxplus_{L|M} h_{K_2}^p(\lambda x)\right)^{\frac{1}{p}}\\
            &=\left(\inf\left\{h_{K_1}^p(\lambda x',\lambda y_1)+h_{K_2}^p(\lambda x',\lambda y_2):\,y_1+y_2=y\right\}\right)^{1/p}\\
            &=\left(\inf\left\{\lambda^p\left(h_{K_1}^p(x',y_1)+h_{K_2}^p(x',y_2)\right):\,y_1+y_2=y\right\}\right)^{1/p}\\
            &=\lambda \left(\inf\left\{h_{K_1}^p(x',y_1)+h_{K_2}^p(x',y_2):\,y_1+y_2=y\right\}\right)^{1/p}\\
            &=\lambda f(x).
        \end{align*}
        
    It remains to show that $f$ is sublinear, i.e., for all $x_1,x_2\in\R^n$, $f(x_1+x_2)\leq f(x_1)+f(x_2)$. We apply Lemma \ref{inf-lowerbd}   with $g(x)=x^{1/p}$ for $p\geq 1$, use the identity $\inf(A+B)=\inf A+\inf B$, and set $x_1=(x_1',y_1)$ and $x_2=(x_2',y_2)$ to get
    \begin{align*}
       & f(x_1)+f(x_2)\\
       &=h_{K_1\capplus_p K_2}(x_1)+h_{K_1\capplus_p K_2}(x_2)\\
        &=\left(\inf\left\{h_{K_1}^p(x_1',w_1)+h_{K_2}^p(x_1',w_2):\,w_1+w_2=y_1\right\}\right)^{1/p}\\
        &+\left(\inf\left\{h_{K_1}^p(x_2',w_3)+h_{K_2}^p(x_2',w_4):\,w_3+w_4=y_2\right\}\right)^{1/p}\\
        &=\inf\left\{\left(h_{K_1}^p(x_1',w_1)+h_{K_2}^p(x_1',w_2)\right)^{1/p}+\left(h_{K_1}^p(x_2',w_3)+h_{K_2}^p(x_2',w_4)\right)^{1/p}:\,w_1+w_2=y_1,\,w_3+w_4=y_2\right\}.
    \end{align*}
    Here, to apply Lemma \ref{inf-lowerbd}, we have used the fact that the sets  
    \[
    \left\{h_{K_1}^p(x_1',w_1)+h_{K_2}^p(x_1',w_2):\,w_1+w_2=y_1\right\} \quad\text{and}\quad \left\{h_{K_1}^p(x_2',w_3)+h_{K_2}^p(x_2',w_4):\,w_3+w_4=y_2\right\}
    \]
    are bounded below by zero, which follows since $h_{K_i}\geq 0$ for $i=1,2$. 
    Next, by the inequality 
    \[
    \left[(a+a')^p+(b+b')^p\right]^{1/p}\leq (a^p+b^p)^{1/p}+((a')^p+(b')^p)^{1/p} ,
    \]
    which holds for $p>1$ and $a,a',b,b'>0$, along with the  sublinearity of the support functions $h_{K_1}$ and $h_{K_2}$, we get that $f(x_1)+f(x_2)$ is greater than or equal to 
    \begin{align*}
        &\inf\left\{\left[\left(h_{K_1}(x_1',w_1)+h_{K_1}(x_2',w_3)\right)^{p}+\left(h_{K_2}(x_1',w_2)+h_{K_2}(x_2',w_4)\right)^{p}\right]^{1/p}:\,w_1+w_2=y_1,\,w_3+w_4=y_2\right\}\\
        &\geq \inf\left\{\left[h_{K_1}^p(x_1'+x_2',w_1+w_3)+h_{K_2}^p(x_1'+x_2',w_2+w_4)\right]^{1/p}:\,w_1+w_2=y_1,\,w_3+w_4=y_2\right\}\\
        &\geq \inf\left\{\left[h_{K_1}^p(x_1'+x_2',w_1+w_3)+h_{K_2}^p(x_1'+x_2',w_2+w_4)\right]^{1/p}:\,(w_1+w_3)+(w_2+w_4)=y_1+y_2\right\}\\
        &=f(x_1+x_2).
    \end{align*}
    This completes the proof.
    \end{proof}


\subsection{Brunn--Minkowski-type inequality for the $L_p$ fiber combination}\label{LpBMI-sec}
	
	As mentioned earlier, Firey \cite{Firey} established the $L_p$ Brunn--Minkowski--Firey inequality, which asserts that for any $p\geq 1$, the volume functional is $\frac{p}{n}$-concave with respect to $L_p$ addition on $\mathcal{K}_o^n$. More precisely, for all $K_1,K_2\in\cK_o^n$ and every $\vartheta\in[0,1]$, 
	\begin{equation}\label{Lp-BMI}
	\vol_n((1-\vartheta)\cdot_p K_1+_p\vartheta\cdot_p K_2)^{\frac{p}{n}} \geq (1-\vartheta) \vol_n(K_1)^{\frac{p}{n}}+\vartheta\vol_n(K_2)^{\frac{p}{n}}
	\end{equation}
	with equality if and only if $K_1=K_2$. In particular, choosing $p=1$, one recovers the classical Brunn--Minkowski inequality.  

The main result of this section is the following $L_p$ analogue of \eqref{fiber-BMI}.

\begin{theorem}\label{main-BMI}
For all $K_1,K_2\in\cK_o^n$, $\vartheta\in[0,1]$  and $p\geq 1$,
    \begin{equation}
        \vol_n((1-\vartheta)\circ_p K_1\medcapplus_p \vartheta\circ_p K_2)^{\frac{p}{\ell}} \geq \vartheta\vol_n(K_1)^{\frac{p}{\ell}}+(1-\vartheta)\vol_n(K_2)^{\frac{p}{\ell}}. 
    \end{equation}
\end{theorem}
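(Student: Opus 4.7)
The plan is to prove this inequality by reducing it to the classical ($p=1$) fiber Brunn--Minkowski inequality \eqref{fiber-BMI}, following the spirit of Firey's derivation of his $L_p$ Brunn--Minkowski inequality \eqref{Lp-BMI} from the classical Brunn--Minkowski inequality.

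My first move is to use the identity $a\circ_p K = a^{1/p}\circ K$ from Definition \ref{main-def-Lp-fiber-new} to rewrite the $L_p$ fiber dilations as classical fiber dilations: $(1-\vartheta)\circ_p K_1 = (1-\vartheta)^{1/p}\circ K_1$ and $\vartheta\circ_p K_2 = \vartheta^{1/p}\circ K_2$. The crux is then to relate the $L_p$ fiber combination $(1-\vartheta)\circ_p K_1 \medcapplus_p \vartheta\circ_p K_2$ to the classical fiber combination $(1-\vartheta)^{1/p}\circ K_1 \medcapplus \vartheta^{1/p}\circ K_2$. Using Lemma \ref{fiber-dilation-support} (to unpack the support functions of the dilates) and Lemma \ref{inf-lowerbd} (to interchange the outer $p$-th root with the infimal convolution), one carefully compares the two support-function formulas $h^p = \inf\{h_{K_1}^p + h_{K_2}^p\}$ versus $h = \inf\{h_{K_1} + h_{K_2}\}$ in order to establish a volume comparison between these two fiber combinations.

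Next, I invoke the classical fiber Brunn--Minkowski inequality \eqref{fiber-BMI}. Since \eqref{fiber-BMI} is stated with convex-combination weights, I first extend it to arbitrary non-negative weights $\alpha,\beta\ge 0$ by combining the $\ell$-homogeneity of volume (Lemma \ref{Lp-sum-homogeneity} specialised to $p=1$) with the distributivity identity $a\circ(K_1\medcapplus K_2)=a\circ K_1\medcapplus a\circ K_2$ from Proposition \ref{Lp-fiber-props}(v) and the associativity of dilation. This yields the homogenised inequality
\[
\vol_n(\alpha\circ K_1 \medcapplus \beta\circ K_2)^{1/\ell} \geq \alpha\vol_n(K_1)^{1/\ell} + \beta\vol_n(K_2)^{1/\ell}.
\]
Applying this with $\alpha=(1-\vartheta)^{1/p}$, $\beta=\vartheta^{1/p}$ and feeding back through the comparison in the previous step gives a $1/\ell$-concavity lower bound on $\vol_n((1-\vartheta)\circ_p K_1\medcapplus_p\vartheta\circ_p K_2)$.

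Finally, I raise both sides to the $p$-th power and apply the elementary bound
\[
(\alpha u + \beta v)^p \geq \alpha^p u^p + \beta^p v^p \qquad (\alpha,\beta,u,v\geq 0,\ p\geq 1),
\]
which follows from $(1+t)^p \geq 1+t^p$ for $t\geq 0,\ p\geq 1$, taken with $\alpha^p=1-\vartheta$, $\beta^p=\vartheta$, $u=\vol_n(K_1)^{1/\ell}$ and $v=\vol_n(K_2)^{1/\ell}$; this promotes the $1/\ell$ exponent to the desired $p/\ell$ exponent. The principal obstacle I anticipate is the comparison step, since the $L_p$ fiber combination is defined through a support-function formula that mixes a power and an infimal convolution, making the required set-theoretic relation with a classical fiber combination delicate; if a direct support-function argument does not suffice, the fall-back is to identify each fiber of the $L_p$ fiber combination as a Firey $L_p$ combination of fibers in $L$, apply \eqref{Lp-BMI} fibrewise, and then integrate by a Minkowski-type integral inequality.
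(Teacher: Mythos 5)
Your strategy breaks down at the comparison step, and the breakdown is not a technicality that careful bookkeeping can repair. Write out both support functions using Lemma \ref{fiber-dilation-support} and Lemma \ref{inf-lowerbd}: one finds
\[
h_{(1-\vartheta)\circ_p K_1\capplus_p \vartheta\circ_p K_2}(x',y)=\inf_{y_1+y_2=y}\Bigl(h_{K_1}^p\bigl(x',(1-\vartheta)^{1/p}y_1\bigr)+h_{K_2}^p\bigl(x',\vartheta^{1/p}y_2\bigr)\Bigr)^{1/p}
\]
whereas the classical combination with weights $\alpha=(1-\vartheta)^{1/p}$, $\beta=\vartheta^{1/p}$ has
\[
h_{(1-\vartheta)^{1/p}\circ K_1\capplus \vartheta^{1/p}\circ K_2}(x',y)=\inf_{y_1+y_2=y}\Bigl\{h_{K_1}\bigl(x',(1-\vartheta)^{1/p}y_1\bigr)+h_{K_2}\bigl(x',\vartheta^{1/p}y_2\bigr)\Bigr\}.
\]
The arguments of $h_{K_1},h_{K_2}$ are \emph{identical} in the two expressions; they differ only in how the two scalars are combined. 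For $p\ge1$ the relevant scalar inequality is $(a^p+b^p)^{1/p}\le a+b$, which forces $(1-\vartheta)\circ_p K_1\medcapplus_p \vartheta\circ_p K_2 \subset (1-\vartheta)^{1/p}\circ K_1\medcapplus \vartheta^{1/p}\circ K_2$. That is an \emph{upper} bound on $\vol_n((1-\vartheta)\circ_p K_1\medcapplus_p \vartheta\circ_p K_2)$, and your chain needs a lower bound, so the argument dies at the first link. (Your final step, raising to the $p$-th power and invoking superadditivity of $t\mapsto t^p$, is fine, but it cannot flip the sign of an inequality upstream. Your fall-back of applying Firey's $L_p$ Brunn--Minkowski inequality fibrewise is also blocked, because the $L_p$ fiber sum is defined through support functions and its fibers in $L$ are \emph{not} Firey $L_p$ combinations of the fibers of $K_1$ and $K_2$.)

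What makes the paper's argument work, and what you are missing, is a preliminary rescaling that turns the weights into a \emph{convex} pair. Set $\lambda_i=\vol_n(K_i)^{1/\ell}$, $\nu=(1-\vartheta)\lambda_1^p+\vartheta\lambda_2^p$ and $\vartheta'=\vartheta\lambda_2^p/\nu$, so that $\vartheta'+(1-\vartheta')=1$, and pass to the normalised bodies $K_i'=\lambda_i^{-1}\circ K_i$ of unit volume. By the associativity/distributivity in Proposition \ref{Lp-fiber-props} and the $\tfrac{\ell}{p}$-homogeneity in Lemma \ref{Lp-sum-homogeneity}, the original volume is recovered from $\vol_n\bigl((1-\vartheta')\circ_p K_1'\medcapplus_p \vartheta'\circ_p K_2'\bigr)$ up to the explicit factor $\nu^{\ell/p}$. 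After this normalisation the pertinent scalar inequality becomes the \emph{weighted} power mean $\bigl((1-\vartheta')a^p+\vartheta'b^p\bigr)^{1/p}\ge(1-\vartheta')a+\vartheta'b$, which does hold for $p\ge1$ when the weights sum to $1$, giving the inclusion in the \emph{correct} direction and allowing McMullen's inequality \eqref{fiber-BMI} to finish the job. Without the weight normalisation your $\alpha,\beta$ do not sum to $1$, and the power-mean comparison you need simply is not available.
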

McMullen's fiber Brunn--Minkowski inequality \eqref{fiber-BMI} is the special case $p=1$. We modify Firey's original proof of the   $L_p$ Brunn--Minkowski inequality \cite{Firey}, which is obtained from the classical Brunn--Minkowski inequality. Analogously, Theorem \ref{main-BMI} will be obtained from \eqref{fiber-BMI}.

\begin{proof}
    We may assume that $p>1$.  For $i=1,2$, let $\lambda_i=\vol_n(K_i)^{1/\ell}$, and set
$\vartheta'=\vartheta\lambda_2^p/[(1-\vartheta)\lambda_1^p+\vartheta\lambda_2^p]$. For any $x=(x',y)\in\R^n$, since $g(x)=x^{1/p}$ is continuous and strictly increasing, by Lemma \ref{inf-lowerbd} we obtain
\begin{align*}
    h_{(1-\vartheta')\circ_p\left(\lambda_1^{-1}\circ K_1\right)\capplus_p\vartheta'\circ_p\left(\lambda_2^{-1}\circ K_2\right)}(x',y) 
    &=h_{\frac{1-\vartheta'}{\lambda_1^p}\circ_p K_1\capplus_p\frac{\vartheta'}{\lambda_2^p}\circ_p K_2}(x',y) \\&=\left(\inf\left\{\frac{1-\vartheta'}{\lambda_1^p}h_{K_1}^p(x',y_1)+\frac{\vartheta'}{\lambda_2^p}h_{K_2}^p(x',y_2):\,y_1+y_2=y\right\}\right)^{1/p}\\
    &=\inf\left\{\left[\frac{1-\vartheta'}{\lambda_1^p}h_{K_1}^p(x',y_1)+\frac{\vartheta'}{\lambda_2^p}h_{K_2}^p(x',y_2)\right]^{1/p}:\,y_1+y_2=y\right\}\\
    &=\inf\left\{\left[\frac{(1-\vartheta)h_{K_1}^p(x',y_1)+\vartheta h_{K_2}^p(x',y_2)}{(1-\vartheta)\lambda_1^p+\vartheta\lambda_2^p}\right]^{1/p}:\,y_1+y_2=y\right\}\\
    &=h_{\frac{1-\vartheta}{\nu}\circ_p K_1\capplus_p\frac{\vartheta}{\nu}\circ_p K_2}(x',y),
\end{align*}
where 
\[
\nu:=(1-\vartheta)\lambda_1^p+\vartheta\lambda_2^p=(1-\vartheta)\vol_n(K_1)^{\frac{p}{\ell}}+\vartheta\vol_n(K_2)^{\frac{p}{\ell}}.
\]
Since the support function $h_K$ uniquely determines the convex body $K$, this implies that 
\[
(1-\vartheta')\circ_p\left(\lambda_1^{-1}\circ K_1\right)\medcapplus_p\vartheta'\circ_p\left(\lambda_2^{-1}\circ K_2\right)=\frac{1-\vartheta}{\nu}\circ_p K_1\medcapplus_p\frac{\vartheta}{\nu}\circ_p K_2.
\]
Hence, by Proposition \ref{Lp-fiber-props}(v)  and Lemma \ref{Lp-sum-homogeneity},
\begin{align*}
\vol_n\left((1-\vartheta')\circ_p\left(\lambda_1^{-1}\circ K_1\right)\medcapplus_p\vartheta'\circ_p\left(\lambda_2^{-1}\circ K_2\right)\right)&=\vol_n\left(\frac{1-\vartheta}{\nu}\circ_p K_1\medcapplus_p\frac{\vartheta}{\nu}\circ_p K_2\right)\\
&=\nu^{-\ell/p}\vol_n((1-\vartheta)\circ_p K_1\medcapplus_p\vartheta\circ_p K_2).
\end{align*}
Therefore, it suffices to show that 
\begin{equation}\label{Lp-BMI-wts}
    \vol_n\left((1-\vartheta')\circ_p\left(\lambda_1^{-1}\circ K_1\right)\medcapplus_p\vartheta'\circ_p\left(\lambda_2^{-1}\circ K_2\right)\right) \geq 1.
\end{equation}
For any $p\geq 1$, we have $((1-\vartheta')a^p+\vartheta' b^p)^{1/p}\geq (1-\vartheta')a+\vartheta' b$. Hence, 
\[
(1-\vartheta')\circ_p\left(\lambda_1^{-1}\circ K_1\right)\medcapplus_p\vartheta'\circ_p\left(\lambda_2^{-1}\circ K_2\right) \supset (1-\vartheta')\circ_1\left(\lambda_1^{-1}\circ K_1\right)\medcapplus_1\vartheta'\circ_1\left(\lambda_2^{-1}\circ K_2\right).
\]
Therefore,  by the monotonicity of volume, \eqref{fiber-BMI}, Lemma \ref{Lp-sum-homogeneity}, and the definition of $\lambda_i$, we obtain
\begin{align*}
\nu^{-\ell/p}\vol_n((1-\vartheta)\circ_p K_1\medcapplus_p\vartheta\circ_p K_2) &\geq 
\vol_n\left((1-\vartheta')\circ_1\left(\lambda_1^{-1}\circ K_1\right)\medcapplus_1\vartheta'\circ_1\left(\lambda_2^{-1}\circ K_2\right)\right)\\
&\geq\left[(1-\vartheta')\vol_n(\lambda_1^{-1}\circ K_1)^{1/\ell}+\vartheta'\vol_n(\lambda_2^{-1}\circ K_2)^{1/\ell}\right]^\ell\\
&=1.
\end{align*}
The result follows.
\end{proof}
\subsection{Minkowski's first inequality for $L_p$ fiber combinations}\label{LpMFI-sec}

Let $K_1,K_2\in\cK_o^n$ and $p\geq 1$. In the groundbreaking work \cite{lutwak}, Lutwak  defined the  $L_p$ first mixed volume $V_p(K_1,K_2)$ to be
\[
V_p(K_1,K_2)=\frac{p}{n}\lim_{\varepsilon\to 0^+}\frac{\vol_n(K_1+_p\varepsilon\cdot_p K_2)-\vol_n(K_1)}{\varepsilon}.
\]
Choosing $p=1$ yields the classical first mixed volume. The $L_p$ Minkowski's first inequality, also due to Lutwak \cite{lutwak}, states that
\[
V_p(K_1,K_2)\geq \vol_n(K_1)^{\frac{n-p}{n}}\vol_n(K_2)^{\frac{p}{n}},
\]
with equality if and only if $K_1$ and $K_2$ are dilates.  The case $p=1$ is the classical Minkowski's first inequality. For more background, we refer the reader to, e.g., \cite{lutwak,Sch}.

    Naturally, this leads us to define the following mixed surface area with respect to the $L_p$ fiber sum.

 \bd For complementary orthogonal subspaces $L$ and $M$ of $\R^n$ and $p\in\R\setminus\{0\}$, the    \emph{$L_p$ fiber mixed surface area} $S_p^\capplus(K_1,K_2)$ of $K_1,K_2\in\cK_o^n$ is defined by 
	\begin{equation}\label{fiberpl}
		S_p^\capplus(K_1,K_2) := \lim_{\varepsilon \to 0^+} \frac{	\vol_n(K_1\medcapplus_p\varepsilon \circ_p K_2) -\vol_n(K_1)}{\varepsilon}.
	\end{equation}
    \ed

The next result is the Minkowski's first inequality for the $L_p$ fiber sum.

	\bt\label{Lp-fiberMinkowski1}
		Let $L$ and $M$ be complementary orthogonal subspaces of $\R^n$ with $\dim L=:\ell$, and let $p\geq 1$. For all $K_1,K_2\in\mathcal{K}_o^n$,  
			\begin{equation}\label{desired-ineq}
			S_p^\capplus(K_1, K_2) \geq \frac{\ell}{p}\cdot\vol_n(K_1)^{\frac{\ell-p}{\ell}}\vol_n(K_2)^{\frac{p}{\ell}}.
			\end{equation}
			If, furthermore, $\vol_n(K_1) = \vol_n(K_2)$, then
			\[
			S_p^\capplus(K_1, K_2) \geq \frac{\ell}{p}\cdot\vol_n(K_1).
			\] 
		\et

\begin{proof}
Inequality \eqref{desired-ineq} will be derived from Theorem \ref{main-BMI} by modifying the standard arguments used to prove the classical Minkowski's first inequality (see, e.g., \cite{Gardner-BMI} or \cite{Sch}).  We include the details for the reader's convenience. Substituting $\varepsilon=t/(1-t)$ into \eqref{fiberpl}, and using Proposition \ref{Lp-fiber-props}(v) and Lemma \ref{Lp-sum-homogeneity}, we derive that
            \begin{align*}
                S_p^\capplus(K_1,K_2) &=\lim_{t\to 0^+}\frac{\vol_n\left(\frac{1}{1-t}\circ_p[(1-t)\circ_p K_1\medcapplus_p t\circ_p K_2]\right)-\vol_n(K_1)}{t/(1-t)} \\
                &=\lim_{t\to 0^+}\frac{(1-t)^{-\ell/p}\vol_n\left((1-t)\circ_p K_1\medcapplus_p t\circ_p K_2\right)-\vol_n(K_1)}{t/(1-t)} \\
                 &=\lim_{t\to 0^+}\frac{\vol_n\left((1-t)\circ_p K_1\medcapplus_p t\circ_p K_2\right)-(1-t)^{\ell/p}\vol_n(K_1)}{t(1-t)^{\frac{\ell-p}{p}}} \\
                 &=\lim_{t\to 0^+}\frac{\vol_n((1-t)\circ_p K_1\medcapplus_p t\circ_p K_2)-\vol_n(K_1)}{t}+\lim_{t\to 0^+}\frac{\left[1-(1-t)^{\ell/p}\right]\vol_n(K_1)}{t}\\
                &=\lim_{t\to 0^+}\frac{\vol_n((1-t)\circ_p K_1\medcapplus_p t\circ_p K_2)-\vol_n(K_1)}{t}+\frac{\ell}{p}\cdot\vol_n(K_1).
            \end{align*}
    We now set $f_{p,\ell}(t):=g_{p,\ell}(t)^{p/\ell}$, where $g_{p,\ell}(t):=\vol_n((1-t)\circ_p K_1\medcapplus_p t\circ_p K_2)$.   
    The preceding computation shows that
\begin{align*}
    f_{p,\ell}'(0) &=\frac{d^+}{dt}[g_{p,\ell}(t)^{p/\ell}]_{t=0}\\
    &=\frac{p}{\ell}\cdot\vol_n(K_1)^{\frac{p-\ell}{\ell}}\cdot\lim_{t\to 0^+}\frac{\vol_n((1-t)\circ_p K_1\medcapplus_p t\circ_p K_2)-\vol_n(K_1)}{t}\\
    &=\frac{p}{\ell}\cdot\vol_n(K_1)^{\frac{p-\ell}{\ell}}\left[S_{p}^{\capplus}(K_1,K_2)-\frac{\ell}{p}\cdot \vol_n(K_1)\right]\\
    &=\frac{p}{\ell}\cdot\vol_n(K_1)^{\frac{p-\ell}{\ell}}S_p^\capplus(K_1,K_2)-\vol_n(K_1)^{\frac{p}{\ell}}\\
    &=\frac{pS_p^\capplus(K_1,K_2)-\ell\vol_n(K_1)}{\ell\vol_n(K_1)^{\frac{\ell-p}{\ell}}}.
\end{align*}
Note also that $f_p(0)=\vol_n(K_1)^{p/\ell}$ and $f_p(1)=\vol_n(K_2)^{p/\ell}$. Thus, the desired inequality \eqref{desired-ineq} is equivalent to $f_p'(0)\geq f_p(1)-f_p(0)$.  Now Theorem \ref{main-BMI} says that $f_p(t)$ is concave, so the latter inequality holds true, and hence \eqref{desired-ineq} holds true. 
\end{proof}

\section{Graph combinations of convex bodies}\label{graph-section}

Rather than defining a sum of convex bodies with respect to their fibers,  we may instead define a sum of convex bodies with respect to their overgraphs and undergraphs. Let $u\in\Sp$ and $K\in\cK^n$. If $f$ and $g$ denote the overgraph and undergraph of $K$, respectively, with respect to $u^\perp$, then we write $K\approx(f,-g)_u$. 

\begin{defn}\label{graph-sum-defn}
    Let $u\in\Sp$ and $a,b\in\R$. Given convex bodies $K_1,K_2\in\cK^n$ with  $K_1|u^\perp=K_2|u^\perp$ and $K_i\approx(f_i,-g_i)_u$, $i=1,2$, we define the \emph{graph combination} $a\diamond_u K_1\diamondplus_u b\diamond_u K_2$ by the relation
    \begin{align*}
        a\diamond_u K_1\diamondplus_u b\diamond_u K_2\approx\left(af_1+bf_2,-(ag_1+bg_2)\right)_u.
    \end{align*}
    We also define the \emph{graph dilation} by
    \[
    a\diamond_u K_1 \approx (af_1,-ag_1)_u.
    \]
\end{defn}
Observe that if $K\approx(f,-g)_u$, then $R_u(K)\approx(g,-f)_u$. Hence, $S_u(K)\approx(\tfrac{1}{2}(f+g),-\tfrac{1}{2}(f+g))_u$, which implies that
\begin{equation}
    \frac{1}{2}\diamond_u K\diamondplus_u \frac{1}{2}\diamond_u R_u(K)=S_u(K).
\end{equation}Also note that $1\diamond_u K=K$.

\subsection{Examples of graph sums of convex bodies}

Consider again the planar convex bodies $K_1=[-1,1]\times[-1,1]$ and $K_2=B_2^2$. Let $u=e_2$ and note that $K_1|u^\perp=K_2|u^\perp=[-1,1]$. For $x_1\in[-1,1]$, we have $f_1(x_1)=g_1(x_1)=1$, and  $f_2(x_1)=1$ and $g_2(x_1)=2-x_1^2$. It follows that
\[
K_1\diamondplus_u K_2=\left\{(x_1,x_2)\in\R^2:\, -1\leq x_1\leq 1,\,-\left(1+\sqrt{1-x_1^2}\right)\leq x_2\leq 1+\sqrt{1-x_1^2}\right\}.
\]
Hence, $K_1\diamondplus_u K_2=[-e_2,e_2]+B_2^2$. Recalling that $K_1\medcapplus_1 K_2=[-e_1,e_1]+B_2^2$, we deduce that $K_1\diamondplus_u K_2$ is different from $K_1\medcapplus_1 K_2$. However, in this case, $K_1\diamondplus_u K_2$ and $K_1\medcapplus_1 K_2$ are rotations of each other. For another example in which this is not the case, consider the rectangles $R_1=[-2,2]\times[-1,1]$ and $R_2=[-2,2]\times[-1/2,1/2]$. Then  with $u=e_2$, we have $R_1\diamondplus_u R_2=[-2,2]\times [-3/2,3/2]$ and $R_1\medcapplus_1 R_2=[-4,4]\times [-1/2,1/2]$. The resulting sets are rectangles with different aspect ratios, so they are not rotations or reflections of each other.

Note also that 
\[
h_{K_1\diamondplus_u K_2}(x_1,x_2)=\sup_{(x,y)\in K_1\diamondplus_u K_2}\{xx_1+yx_2\}=\sup_{x\in[-1,1]}\left\{xx_1+(1+\sqrt{1-x^2})x_2\right\}.
\]
Setting $\frac{d}{dx}\left(xx_1+(1+\sqrt{1-x^2})x_2\right)$ equal to 0, we get  $x=x_1/\sqrt{x_1^2+x_2^2}$. Substituting, we obtain $h_{K_1\diamondplus_u K_2}(x_1,x_2)=|x_2|+\sqrt{x_1^2+x_2^2}$. Comparing this with the support function of $K_1\medcapplus_2 K_2$ in \eqref{example-support-fn-2}, we deduce that $K_1\diamondplus_u K_2$ is also different from  $K_1\medcapplus_2 K_2$. Hence, in general, the graph sum and $L_p$ fiber sum of convex bodies are distinct operations on $\cK^2_o$. These examples are illustrated in Figure 2 below.

For another example (which will be used later), let $K_3=\{(x_1,x_2)\in\R^2:\, -1\leq x_1\leq 1,\, x_1^2-1\leq x_2\leq 1\}$. Then $K_3\in\cK^2$ and $K_1|u^\perp=K_3|u^\perp=[-1,1]$. The graph sum of $K_1$ and $K_3$ is
\[
K_1\diamondplus_u K_3=\left\{(x_1,x_2)\in\R^2:\,-1\leq x_1\leq 1,\,x_1^2-2\leq x_2\leq 2\right\}.
\]

\begin{center}
\begin{tikzpicture}
    \draw[->] (-2.5, 0) -- (2.5, 0) node[right] {$x_1$};
    \draw[->] (0, -2.5) -- (0, 2.5) node[above] {$x_2$};


       \fill (1,0) circle (1pt) node[below right, font=\small] {$(1,0)$};
    \fill (-1,0) circle (1pt) node[below left, font=\small] {$(-1,0)$};
    \fill (0,2) circle (1pt) node[above right, font=\small] {$(0,2)$};
    \fill (0,-2) circle (1pt) node[below right, font=\small] {$(0,-2)$};

    \draw[blue, thick, fill=blue!10, opacity=0.7] plot[domain=-1:1, samples=100] (\x, {1 + sqrt(1 - \x*\x)}) -- plot[domain=1:-1, samples=100] (\x, {-(1 + sqrt(1 - \x*\x))}) -- cycle;


    \node[align=center] at (2, 1) {{\color{blue}$K_1\diamondplus_u K_2$}};

\end{tikzpicture}
\begin{tikzpicture}[scale=0.8]
    \draw[->] (-5, 0) -- (5, 0) node[right] {$x_1$};
    \draw[->] (0, -3.5) -- (0, 3.5) node[above] {$x_2$};

    \draw[dashed, black] (2, -3.5) -- (2, 3.5);
    \node[below, black] at (2, -3.5) {\small $2$};
    \draw[dashed, black] (-2, -3.5) -- (-2, 3.5);
    \node[below, black] at (-2, -3.5) {\small $-2$};
    \draw[dashed, black] (4, -3.5) -- (4, 3.5);
    \node[below, black] at (4, -3.5) {$4$};
    \draw[dashed, black] (-4, -3.5) -- (-4, 3.5);
    \node[below, black] at (-4, -3.5) {$-4$};

    \draw[dashed, black] (-5, -1.5) -- (5, -1.5) node[right, black] {\small $-3/2$};
    \draw[dashed, black] (-5, 1.5) -- (5, 1.5) node[right, black] {\small $3/2$};
    \draw[dashed, black] (-5, -0.5) -- (5, -0.5) node[right, black] {\small $-1/2$};
    \draw[dashed, black] (-5, 0.5) -- (5, 0.5) node[right, black] {\small $1/2$};

    \draw[red, thick, fill=red!10, opacity=0.7] (-2, -1.5) rectangle (2, 1.5);
    \node[red, above right] at (1.8, 1.5) {\small $R_1\diamondplus_u R_2$};

    \draw[blue, thick, fill=blue!10, opacity=0.7] (-4, -0.5) rectangle (4, 0.5);
    \node[blue, below right] at (1.8, -0.6) {\small $R_1\medcapplus_1 R_2$};

\end{tikzpicture}
\end{center}
\noindent{\footnotesize{\bf Figure 2:} The graph sum $K_1\diamondplus_u K_2$ is illustrated in the left figure. In the right figure, the sets $R_1\diamondplus_u R_2$ and $R_1\medcapplus_1 R_2$ are depicted.}

\subsection{Properties of graph combinations}

Next, we highlight some  basic properties of  graph combinations of convex bodies.
	
	\begin{proposition}\label{graph-proposition}  Let $u\in\Sp$ and $a,b\geq 0$. Let  $K_1,K_2\in\cK^n$ be such that $K_i\approx(f_i,-g_i)_u$, $i=1,2$, and assume that $K_1|u^\perp=K_2|u^\perp$.

		\begin{itemize}
			\item [(i)] (Shadow property)  We have
			\[
   (a\diamond_u K_1\diamondplus_u b\diamond_u K_2)|u^\perp=K_1|u^\perp=K_2|u^\perp.
   \]

   \item[(ii)] 	(Commutativity)  It holds that  \[K_1\diamondplus_u K_2=K_2\diamondplus_u K_1.\]
			
			\item [(iii)] (Associativity of sum) Let $K_3$ also be a convex body in $\mathcal{K}^n$ with $K_1|u^\perp=K_2|u^\perp=K_3|u^\perp$. Then
			$$(K_1\diamondplus_u K_2)\diamondplus_u K_3=K_1\diamondplus_u (K_2\diamondplus_u  K_3).$$ 	
			
			\item [(iv)] (Associativity of dilation) It holds that  \[ab\diamond_u K=	a\diamond_u	(b\diamond_u K).\]
             
            \item [(v)] (Distributivity) We have
            \begin{align*}
                a\diamond_u (K_1\diamondplus_u K_2)&=a\diamond_u K_1\diamondplus_u a\diamond_u K_2
            \end{align*}
            and
            \[
            (a+b)\diamond_u K_1=a\diamond_u K_1\diamondplus_u b\diamond_u K_1.
            \]
			
			\item[(vi)] (Monotonicity) Let $L_1,L_2\in\mathcal{K}^n$ be such that $K_1\subset L_1$ and $K_2\subset L_2$. Then 
			$$a\diamond_u K_1\diamondplus_u b\diamond_u K_2\subset a\diamond_u L_1\diamondplus_u b\diamond_u L_2.$$   

		\end{itemize}
	\end{proposition}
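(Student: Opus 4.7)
The plan is to deduce all six properties directly from Definition~\ref{graph-sum-defn} by reducing each claim to a pointwise identity or inequality for the overgraph and undergraph functions. Write $K_i\approx(f_i,-g_i)_u$ on the common shadow $\Pi:=K_1|u^\perp=K_2|u^\perp$, where $f_i$ is concave and $g_i$ is convex on $\Pi$. Before checking any individual property, I would first confirm that $a\diamond_u K_1\diamondplus_u b\diamond_u K_2\in\cK_o^n$ whenever $a+b>0$: since $a,b\geq 0$, both $af_1+bf_2$ and $-(ag_1+bg_2)$ are concave on $\Pi$, so the graph combination is a compact convex subset of $\R^n$ whose interior is nonempty and contains a neighborhood of $o$ (because $g_i(0)<0<f_i(0)$ and $0\in\intt(\Pi)$, as $o\in\intt(K_i)$). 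This verification is the only mildly delicate step in the whole proposition.

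With this in hand, (i)--(v) become immediate consequences of the definition. For (i), the graph combination is by construction supported over $\Pi$, so projecting to $u^\perp$ returns $\Pi$. Claims (ii) and (iii) reduce to commutativity and associativity of pointwise addition: $af_1+bf_2=bf_2+af_1$ and $(f_1+f_2)+f_3=f_1+(f_2+f_3)$, with identical identities for the undergraphs. For (iv), both $(ab)\diamond_u K$ and $a\diamond_u(b\diamond_u K)$ correspond to the pair $(abf,-abg)_u$ by associativity of scalar multiplication. For (v), the first identity comes from $a(f_1+f_2)=af_1+af_2$ and $a(g_1+g_2)=ag_1+ag_2$, while the second comes from $(a+b)f_1=af_1+bf_1$ and the analogous identity for $g_1$.

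For claim (vi), I would assume additionally that $L_1|u^\perp=L_2|u^\perp$ so that the right-hand graph combination is defined, and write $L_i\approx(F_i,-G_i)_u$. Then $\Pi=K_1|u^\perp\subset L_1|u^\perp$, and the inclusions $K_i\subset L_i$ yield the pointwise inequalities $f_i\leq F_i$ and $g_i\geq G_i$ on $\Pi$. Taking nonnegative linear combinations preserves these inequalities, so over $\Pi$ the graph of $a\diamond_u K_1\diamondplus_u b\diamond_u K_2$ lies between the overgraph and undergraph of $a\diamond_u L_1\diamondplus_u b\diamond_u L_2$, which gives the containment in $\R^n$. The only real bookkeeping issue in the entire proposition is the mismatch of shadows appearing here; every other property is a structural consequence of the definition and amounts to no more than a one-line calculation.
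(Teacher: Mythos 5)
Your proof takes essentially the same route as the paper: every item reduces to the corresponding pointwise identity or inequality for the overgraph and undergraph functions, which is exactly what the paper does (items (i)--(v) are a line each; (vi) follows from $f_i\le h_i$, $g_i\le l_i$ where $L_i\approx(h_i,-l_i)_u$). Your preliminary observation that the combination should be checked to lie in $\cK_o^n$ is good hygiene and is what the paper carries out separately in Theorem~\ref{p-sum-convex-2}.

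One small slip in (vi): with the paper's convention $K_i\approx(f_i,-g_i)_u$, the undergraph of $K_i$ is $-g_i$, so $g_i$ is \emph{concave}, not convex as you wrote, and the inclusion $K_i\subset L_i$ gives $g_i\le G_i$ rather than $g_i\ge G_i$. (You appear to have silently switched to treating $g_i$ as the undergraph itself.) This does not affect the conclusion -- either sign convention yields $ag_1+bg_2$ and $aG_1+bG_2$ ordered the right way for the undergraph containment -- but it is worth keeping the convention straight, since both $f_i$ and $g_i$ are concave in this paper and the later proofs (e.g.\ of Theorem~\ref{BMaffine}) rely on that fact. Your remark that $L_1|u^\perp=L_2|u^\perp$ is needed for the right-hand side of (vi) to be defined is a correct and useful observation that the paper's statement leaves implicit.
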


\begin{proof}
    Part (i) follows immediately from Definition \ref{graph-sum-defn}. For (ii), we have
    \begin{align*}
        K_1\diamond_u K_2\approx (f_1+f_2,-(g_1+g_2))_u=(f_2+f_1,-(g_2+g_1))_u=K_2\diamond_u K_1.
    \end{align*}
    Part (iii) is similar. For (iv), we have
    \[
    (ab)\diamond_u K_1\approx ((ab)f,-(ab)g)_u=(a(bf),-a(bg))_u\approx a\diamond_u(b\diamond_u K_1).
    \]
    Similarly, part (v) follows from Definition \ref{graph-sum-defn}:
    \begin{align*}
        a\diamond_u K_1\diamondplus a\diamond_u K_2&\approx (af_1+af_2,-(ag_1+ag_2))_u \\
        &=(a(f_1+f_2),-a(g_1+g_2))_u\approx a\diamond_u(K_1\diamondplus_u K_2).
    \end{align*}
    The other distributive property in (v) is shown in the same way. Finally, for (vi) note that for $i=1,2$, the condition $K_i\subset L_i$ implies $f_i \leq h_i$ and $g_i\leq l_i$, where $h_i$ and $l_i$ are the overgraph and undergraph of $L_i$, respectively. The result now follows from Definition \ref{graph-sum-defn}. 
\end{proof}

\begin{remark}
    Observe that  if $a,b\geq 0$ and $K_1,K_2\in\cK^n$ satisfy $K_1|u^\perp=K_2|u^\perp$, then
    \begin{align*}
\vol_n(a\diamond_u K_1\diamondplus_u b\diamond_u K_2) &= \int_{(a\diamond_u K_1\diamondplus_u b\diamond_u K_2)|u^\perp}\ell_{a\diamond_u K_1\diamondplus_u b\diamond_u K_2}(u;x')\,dx'\\
&=\int_{K_1|u^\perp}\left[a(f_1(x')+g_1(x'))+b(f_2(x')+g_2(x'))\right]dx'\\
&=a\int_{K_1|u^\perp}\ell_{K_1}(u;x')\,dx'+b\int_{K_2|u^\perp}\ell_{K_2}(u;x')\,dx'\\
&=a\vol_n(K_1)+b\vol_n(K_2).
    \end{align*}
\end{remark}

The next result shows that the  graph sum is a binary operation on $\cK^n$.

 \begin{theorem}\label{p-sum-convex-2}
     Let $u\in\Sp$ and $a,b\geq 0$ be given, with $a,b$ not both zero. Suppose that  $K_1,K_2\in\mathcal{K}^n$ satisfy $K_1|u^\perp=K_2|u^\perp$. Then   $a\diamond_{u} K_1\diamondplus_{u} b\diamond_{u} K_2\in\cK^n$. If, furthermore, $K_1,K_2\in\cK_o^n$, then $a\diamond_u K_1\diamondplus_u b\diamond_u K_2\in\cK_o^n$.
 \end{theorem}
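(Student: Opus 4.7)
The plan is to verify that the set $E := a\diamond_u K_1 \diamondplus_u b\diamond_u K_2$ satisfies the three defining properties of a convex body in $\cK_o^n$: convexity, compactness, and containment of the origin in its interior. Write $P := K_1|u^\perp = K_2|u^\perp$, and put $F := af_1 + bf_2$ and $G := ag_1 + bg_2$ on $P$, so that by Definition \ref{graph-sum-defn} we have
\[
E = \{(x', t) \in \R^n : x' \in P,\ G(x') \le t \le F(x')\}.
\]

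First I would establish convexity. Since each $K_i \in \cK_o^n$, the overgraph function $f_i$ is concave and the undergraph function $g_i$ is convex on $P$. As $a, b \ge 0$, it follows that $F$ is concave and $G$ is convex on $P$. Given $(x'_1, t_1), (x'_2, t_2) \in E$ and $\lambda \in [0,1]$, the point $\lambda x'_1 + (1-\lambda) x'_2$ belongs to the convex set $P$, and the concavity of $F$ together with the convexity of $G$ yield
\[
G(\lambda x'_1 + (1-\lambda) x'_2) \le \lambda t_1 + (1-\lambda) t_2 \le F(\lambda x'_1 + (1-\lambda) x'_2),
\]
so $\lambda(x'_1, t_1) + (1-\lambda)(x'_2, t_2) \in E$.

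Next, for compactness, $P$ is compact and both $F$ and $G$ are bounded on $P$ (being upper- and lower-semicontinuous, respectively, on a compact set), so $E$ is bounded. For closedness I would let $(x'_n, t_n) \in E$ converge to $(x', t)$; then $x' \in P$, and semicontinuity forces $G(x') \le \liminf_n G(x'_n) \le t \le \limsup_n F(x'_n) \le F(x')$, placing $(x', t) \in E$.

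Finally, let $0'$ denote the origin of $u^\perp$. Since $o \in \intt(K_i)$, the origin $0'$ lies in $\intt(P)$ and $g_i(0') < 0 < f_i(0')$ for $i=1,2$. Because $a$ and $b$ are not both zero, this forces $G(0') < 0 < F(0')$. Concave and convex functions are continuous on the interior of their convex domain, so $F$ and $G$ are continuous on $\intt(P)$; hence there exist $\rho, \delta > 0$ with $B_\rho(0') \subset \intt(P)$ and $F > \delta$, $G < -\delta$ on $B_\rho(0')$. This furnishes the open neighborhood $B_\rho(0') \times (-\delta, \delta) \subset E$ of $o$. The one subtlety I expect is in the compactness argument on $\partial P$, where $f_i$ and $g_i$ need not be continuous; it is precisely the upper (resp.\ lower) semicontinuity coming from the closedness of $K_i$ that saves the argument.
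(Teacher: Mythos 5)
Your proof is correct and follows the same approach as the paper's---concavity of $F$ and convexity of $G$ give convexity of the graph combination, and compactness and origin-in-interior follow from the hypotheses, the graph structure, and Heine--Borel---while supplying the semicontinuity-at-$\partial P$ and interior-continuity details that the paper only gestures at. One small notational caveat: your $g_i$ denotes the undergraph itself (convex), whereas in Definition~\ref{graph-sum-defn} the paper writes $K_i\approx(f_i,-g_i)_u$, so that the paper's $g_i$ is the \emph{negative} of the undergraph and hence concave; your argument is internally consistent but uses the opposite sign convention, so be careful when citing the definition directly.
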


 \begin{proof}
Since  $K_1,K_2\in\cK^n$ and $K_1|u^\perp=K_2|u^\perp$, the functions $f_1,f_2,g_1,g_2$ are concave on $K_i|u^\perp$. Thus, since $a,b\geq 0$, the overgraph function $af_1+bf_2$ and the undergraph function $ag_1+bg_2$ are concave on $K_i|u^\perp$. Since $(a\diamond_u K_1\diamondplus_u b\diamond_u K_2)|u^\perp=K_i|u^\perp$, by definition of the graph combination this implies that $a\diamond_u K_1\diamondplus_u b\diamond_u K_2$ is convex. The fact that $a\diamond_u K_1\diamondplus_u b\diamond_u K_2$ is compact and has nonempty interior follows from the assumptions that $K_1,K_2\in\cK^n$ and $a,b\geq 0$ are not both zero, together with the Heine--Borel theorem, the definition of the graph combination, and the fact that $K_1$ and $K_2$ have nonempty interiors. If $K_1$ and $K_2$ contain the origin in their interiors, then $o\in\relint(K_1|u^\perp)\cap\relint(K_2|u^\perp)=\relint((a\diamond_u K_1\diamondplus_u b\diamond_u K_2)|u^\perp)$, and hence $o\in\intt(a\diamond_u K_1\diamondplus_u b\diamond_u K_2)$. Thus,  $a\diamond_u K_1\diamondplus_u b\diamond_u K_2\in\cK_o^n$.
    \end{proof}

    Additionally, the graph sum of two convex polytopes is a convex polytope. More precisely, we have the following

\begin{proposition}
    Let $P_1,P_2\in\cK^n$ be polytopes which satisfy $P_1|u^\perp=P_2|u^\perp$. Then for any $a,b\geq 0$, 
    the graph combination $a\diamond_u P_1\diamondplus_u b\diamond_u P_2$ is a polytope in $\R^n$. If $a,b$ are not both zero, then $a\diamond_u P_1\diamondplus_u b\diamond_u P_2\in\cK^n$.
\end{proposition}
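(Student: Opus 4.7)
The plan is to exhibit the graph combination as a bounded intersection of finitely many closed halfspaces in $\R^n$, which will make it a polytope; the $\cK_o^n$ claim in the second assertion will then follow immediately from Theorem \ref{p-sum-convex-2}. Set $Q := P_1|u^\perp = P_2|u^\perp$, which is itself a polytope in $u^\perp$. The key observation is that since each $P_i$ is a polytope, its overgraph $f_i$ (respectively, undergraph $g_i$) is a concave (respectively, convex) piecewise linear function on $Q$. Concretely, reading off the halfspace representation of $P_i$ and separating those halfspaces whose outer normals have positive, negative, or zero $u$-component, one obtains finite families of affine functions $\{A_j^{(i)}\}_j$ and $\{B_k^{(i)}\}_k$ on $u^\perp$ such that
\[
f_i(x') = \min_j A_j^{(i)}(x'), \qquad g_i(x') = \max_k B_k^{(i)}(x'), \qquad x' \in Q.
\]

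Next, I would use $a, b \geq 0$ to observe that the positive combinations $F := af_1 + bf_2$ and $G := ag_1 + bg_2$ are themselves a minimum and a maximum, respectively, of finitely many affine functions: by distributing over the index-set products,
\[
F(x') = \min_{(j_1,j_2)}\bigl[aA_{j_1}^{(1)}(x') + bA_{j_2}^{(2)}(x')\bigr], \qquad G(x') = \max_{(k_1,k_2)}\bigl[aB_{k_1}^{(1)}(x') + bB_{k_2}^{(2)}(x')\bigr].
\]
By Definition \ref{graph-sum-defn},
\[
a\diamond_u P_1 \diamondplus_u b\diamond_u P_2 = \{(x',t) \in \R^n : x' \in Q,\ G(x') \leq t \leq F(x')\},
\]
which, upon substituting the min/max expressions, becomes the set of $(x',t)$ satisfying finitely many affine inequalities (together with the halfspace inequalities defining $Q$, viewed in $\R^n$ as constraints on $x'$ that ignore the $t$-coordinate). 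This realizes the graph combination as an intersection of finitely many closed halfspaces in $\R^n$, and since $f_i, g_i$ are bounded on the compact set $Q$, the intersection is itself bounded, hence a polytope.

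For the second assertion, if $a, b$ are not both zero, then Theorem \ref{p-sum-convex-2} applies directly to give $a\diamond_u P_1 \diamondplus_u b\diamond_u P_2 \in \cK_o^n$; combined with the polytopal structure just established, this completes the proof. I do not anticipate a serious obstacle here, as the argument is essentially a routine unpacking of the halfspace representation of a polytope. The only mildly subtle point is the identification of the overgraph and undergraph of a polytope as minima and maxima of finitely many affine functions, which is a standard consequence of the $H$-representation.
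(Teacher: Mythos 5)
Your proof is correct and follows essentially the same route as the paper's: identify the overgraph and undergraph of each polytope as piecewise-affine concave/convex functions, note that positive linear combinations preserve this structure, and conclude via Theorem~\ref{p-sum-convex-2} for the $\cK_o^n$ claim. You spell out what the paper leaves implicit — namely that ``piecewise-affine concave/convex over a polytopal base'' translates into a finite $H$-representation in $\R^n$ — by separating the halfspaces of each $P_i$ according to the sign of the $u$-component of their outer normals and distributing $\min$/$\max$ over the index products; this is a valid and somewhat more self-contained way to close the argument, at the minor cost of a sign-convention mismatch (your $g_i$ is the convex undergraph itself, whereas in the paper's $K\approx(f,-g)_u$ notation $g$ denotes its negative, which is concave), though your usage is internally consistent and does not affect correctness.
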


\begin{proof}
Let $P_1\approx(f_1,-g_1)$ and $P_2\approx(f_2,-g_2)$. Since $P_1,P_2$ are convex polytopes, the functions $f_1,f_2,g_1,g_2$ are piecewise affine and concave on their common domain $P_1|u^\perp=P_2|u^\perp$. Thus, for any $a,b\geq 0$ which are not both zero, the functions $af_1+bf_2$ and $ag_1+bg_2$ are piecewise affine and concave with the same domain. Since $a\diamond_u P_1\diamondplus_u b\diamond_u P_2\approx (af_1+bf_2,-(ag_1+bg_2))_u$ and $(a\diamond_u P_1\diamondplus_u b\diamond_u P_2)|u^\perp=P_1|u^\perp=P_2|u^\perp$, this implies that  $a\diamond_u P_1\diamondplus_u b\diamond_u P_2$ is a polytope in $\R^n$. If $a,b$ are not both zero, then $a\diamond_u P_1\diamondplus_u b\diamond_u P_2$ has nonempty interior, and hence $a\diamond_u P_1\diamondplus_u b\diamond_u P_2\in\cK^n$.
\end{proof}


\subsection{General affine surface areas of graph combinations}\label{section4}
	
	In this subsection, we show that the general $L_\phi$ (respectively, $L_\psi$) affine surface areas are concave (respectively, convex) with respect to the graph sum. These inequalities  generalize fundamental results due to Ye  \cite{Ye2013} on the monotone  behavior of the general affine surface areas under Steiner symmetrizations.  To formulate and prove our results, we will need a few more  ingredients. 
 
 For a (smooth enough) function $f:
	\bbR^{n-1}\rightarrow \bbR$, we set $\langle f(x)\rangle:=f(x)-\langle
	x, \nabla f(x)\rangle$, where  $\nabla f$ is the gradient of $f$. Note that for any two (smooth enough) functions $f, g:
	\bbR^{n-1}\rightarrow \bbR$ and all $a,b\in\R$,  we have
	$\langle af(x)+bg(x)\rangle=a \langle f(x)\rangle+b\langle
	g(x)\rangle$. The Hessian matrix of $f(x)$ is denoted by  $\hess(f(x))$, and  we use $\det (T)$ to denote 
	the determinant of a linear transformation $T$. 

 Let  $\fconc$ denote the class of functions $\phi: (0,\infty)\rightarrow
	(0,\infty)$ such that either
	$\phi$ is a nonzero constant function or $\phi$ is concave with $\lim _{t\rightarrow 0^+} \phi(t)=0$ and  $\lim
	_{t\rightarrow \infty}\frac{\phi(t)}{t}=0$, and set $\phi(0)=0$. For $K\in\cK_o^n$, the \emph{$L_\phi$ affine surface area} $\as_\phi(K)$ of $K$ is defined by	
	\begin{equation}\label{as-phi-def}
		\as_\phi(K)=\int _{\partial K} \phi
		\left(\frac{\k_K (x)}{\langle x, N_K(x)\rangle^{n+1}}\right)
		\langle x, N_K(x)\rangle\,dS_K(x),
	\end{equation}
	provided the  integral exists. Here $\partial K$ denotes the boundary of $K$, $\kappa_K(x)$ is the Gaussian curvature of $K$ at $x\in\partial K$, $N_K(x)$ is the outer unit normal of $K$ at $x$ and $S_K$ is the classical surface area measure of $K$. 
 
	The  $L_\phi$ affine surface area originated in valuation theory, where it was used to prove a fundamental result on the classification of upper semicontinuous ${\rm SL}(n)$-invariant valuations on $\mathcal{K}_o^n$.  More specifically, Ludwig and Reitzner \cite{LudR} proved that a functional $\Phi:\mathcal{K}_o^n\to\R$ is an upper semicontinuous, ${\rm SL}(n)$-invariant valuation which vanishes on polytopes if and only if there exists a concave function $\phi:[0,\infty)\to[0,\infty)$ with $\lim_{t\to 0^+}\phi(t)=\lim_{t\to\infty}\phi(t)/t=0$ such that $\Phi(K)=\as_\phi(K)$.  Here the upper semicontinuity of the $L_\phi$ affine surface area means that for every sequence of convex bodies $\{K_i\}_{i=1}^\infty\subset\cK_o^n$ which converges to $K$ in the Hausdorff metric, we have
 \[
 \limsup_{i\rightarrow \infty} \as_{\phi} (K_i)\leq \as_{\phi} (K).
 \]

	The next result is due to Ye \cite{Ye2013}. It connects the $L_\phi$ affine surface area of a convex body to its overgraph and undergraph.
	
	\bl\label{Equivalent:definition:affine:surface:area}\cite[Lemma 2.1]{Ye2013}  Let
	$K\in\cK_o^n$.  For all $\phi\in \fconc$, it holds that 
	$$\as_{\phi}(K)=\int _{K|e_n^{\perp}}\left\{\phi\left(\frac{|\det(\hess(f_{e_n}(x')))|}{\langle f_{e_n}(x')\rangle^{n+1}}\right) \langle f_{e_n}(x')\rangle
	+\phi\left(\frac{|\det( \hess(g_{e_n}(x')))|}{\langle
		g_{e_n}(x')\rangle^{n+1}}\right) \langle g_{e_n}(x')\rangle \right\}\,dx'.$$
	\el

      Let $\fconv$ denote the class of functions $\psi: (0,\infty)\rightarrow (0,\infty)$ such that either $\psi$ is a nonzero constant function, or $\psi$ is convex with $\lim_{t\rightarrow 0^+} \psi(t)=\infty$ and $ \lim_{t\rightarrow \infty} \psi(t)=0$ (in this case, we set $\psi(0)=\infty$). For a convex body $K\in\mathcal{K}_o^n$ and $\psi\in\fconv$, the \emph{$L_\psi$ affine surface area} $\as_\psi(K)$ was defined by Ludwig  \cite{Ludwig2009} to be
	\[
	\as_\psi(K)=\int _{\partial K} \psi
	\left(\frac{\k_K (x)}{\langle x, N_K(x)\rangle^{n+1}}\right)
	\langle x, N_K(x)\rangle\,dS_K(x),
	\]
provided the integral exists. Ludwig \cite[Theorem 6]{Ludwig2009} proved that if  $\psi\in\fconv$, then the $L_{\psi}$ affine surface area  is  lower  semicontinuous, that is,
			for every sequence of convex bodies $\{K_i\}_{i=1}^\infty\subset\cK_o^n$ converging to $K$ in the
			Hausdorff metric, 
   \[
   \liminf_{i\rightarrow \infty} \as_{\psi} (K_i)\geq \as_{\psi} (K).
   \]
   
The following result is also due to Ye \cite{Ye2013}.  
\begin{lemma}\label{as-psi-deping-lemma}\cite[Lemma 2.2]{Ye2013}
    For every 	$K\in\cK_o^n$ which has positive Gaussian curvature almost everywhere and every $\psi\in\fconv$, it holds that
	$$\as_{\psi}(K)=\int _{K|e_n^{\perp}}\left\{\psi\left(\frac{|\det(\hess(f_{e_n}(x')))|}{\langle f_{e_n}(x')\rangle^{n+1}}\right) \langle f_{e_n}(x')\rangle
	+\psi\left(\frac{|\det(\hess(g_{e_n}(x')))|}{\langle
		g_{e_n}(x')\rangle^{n+1}}\right) \langle g_{e_n}(x')\rangle \right\}\,dx'.$$
\end{lemma}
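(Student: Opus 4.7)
The plan is to mirror the strategy used to prove Lemma~\ref{Equivalent:definition:affine:surface:area}, since the right-hand side of the desired identity is formally identical to that formula with $\phi$ replaced by $\psi$. The only genuinely new issue will be accommodating the singularity $\psi(0)=\infty$, which is precisely where the hypothesis of positive Gaussian curvature almost everywhere enters.

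First, I would decompose $\partial K$ (up to an $\mathcal{H}^{n-1}$-null ``side'' set consisting of points projecting to the relative boundary of $K|e_n^\perp$) into the upper piece $\partial^{+}K=\{(x',f_{e_n}(x')):x'\in K|e_n^\perp\}$ and the lower piece $\partial^{-}K=\{(x',g_{e_n}(x')):x'\in K|e_n^\perp\}$. By Alexandrov's theorem, $f_{e_n}$ and $g_{e_n}$ are twice differentiable a.e.\ on $\relint(K|e_n^\perp)$, so the standard graph-parametrization formulas apply almost everywhere. On $\partial^{+}K$, the outer unit normal is $N=(-\nabla f_{e_n},1)/\sqrt{1+|\nabla f_{e_n}|^2}$, the surface area element is $dS=\sqrt{1+|\nabla f_{e_n}|^2}\,dx'$, the Gaussian curvature is $\kappa_K=|\det\hess f_{e_n}|/(1+|\nabla f_{e_n}|^2)^{(n+1)/2}$, and one computes $\langle x,N\rangle=\langle f_{e_n}(x')\rangle/\sqrt{1+|\nabla f_{e_n}|^2}$. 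Substituting these into the defining integrand of \eqref{as-phi-def}, all the $\sqrt{1+|\nabla f_{e_n}|^2}$ factors cancel inside $\kappa_K/\langle x,N\rangle^{n+1}$, and the remaining radical in $\langle x,N\rangle\,dS$ also cancels, so the contribution from $\partial^{+}K$ reduces to the integral $\int_{K|e_n^\perp}\psi\bigl(|\det\hess f_{e_n}|/\langle f_{e_n}\rangle^{n+1}\bigr)\langle f_{e_n}\rangle\,dx'$. Repeating the computation on $\partial^{-}K$ (whose outer normal now points in the $-e_n$ direction) produces the $g_{e_n}$-summand after the analogous cancellations.

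The last step is to justify the integrability. Because $\psi(0)=\infty$, the integrand is a priori singular wherever $\kappa_K=0$. The hypothesis that $\kappa_K>0$ almost everywhere on $\partial K$ translates, via the graph formula for $\kappa_K$, into $|\det\hess f_{e_n}|>0$ and $|\det\hess g_{e_n}|>0$ a.e.\ on $K|e_n^\perp$, which is exactly what is needed so that $\psi$ is evaluated at strictly positive arguments almost everywhere on the fiber domain. The hard part — and essentially the only real difference from the proof of Lemma~\ref{Equivalent:definition:affine:surface:area} — will be this careful handling of the singularity of $\psi$ at $0$, which does not arise in the concave setting where $\phi(0)=0$; everything else is a routine change of variables from $\partial K$ to $K|e_n^\perp$ using the graph parametrization.
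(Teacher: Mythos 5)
The paper does not prove this lemma; it is quoted as a known result from Ye \cite[Lemma 2.2]{Ye2013}, so there is no in-paper proof against which to compare your argument. Your sketch is nevertheless a reasonable outline of the standard graph-parametrization proof, but two points are off and would need to be fixed before it is airtight.

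First, a sign issue. Parametrizing $\partial^{-}K$ by $\{(x',g_{e_n}(x'))\}$ with $g_{e_n}$ the convex minimum-of-the-fiber function from Section~1 gives outer normal $N=(\nabla g_{e_n},-1)/\sqrt{1+|\nabla g_{e_n}|^2}$ and hence $\langle x,N_K(x)\rangle=-\langle g_{e_n}(x')\rangle/\sqrt{1+|\nabla g_{e_n}|^2}$, so $\langle g_{e_n}(x')\rangle<0$; but the lemma's formula needs $\langle g_{e_n}(x')\rangle>0$. The $g_{e_n}$ in the statement is therefore the negative of the Section~1 undergraph (the $K\approx(f,-g)_u$ convention of Section~3). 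This is only notation, but without the sign flip your undergraph summand comes out wrong. Second, you locate the role of the positive-Gaussian-curvature hypothesis in avoiding evaluations of $\psi$ at $0$ on the right-hand side, but that is not really the danger: if $\kappa_K=0$ on a set of positive $S_K$-measure that the two graph charts actually cover, then both sides are $+\infty$ and the identity still holds. The genuine obstruction is the ``side'' of $\partial K$ — the set where $N_K(x)\perp e_n$ — which is invisible to both graph parametrizations yet can carry positive $S_K$-measure (e.g.\ a cylinder). There $\kappa_K=0$ and $\psi(0)=\infty$, so the defining boundary integral picks up an infinite contribution that the right-hand side never sees, and the identity fails. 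Positive Gaussian curvature a.e.\ forces the Gauss-map preimage of the equator, i.e.\ the side set, to be $S_K$-null, which is exactly why this hypothesis appears here but not in Lemma~\ref{Equivalent:definition:affine:surface:area}, where $\phi(0)=0$ annihilates any side contribution automatically.
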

  
  	For more background on the general affine surface areas and their numerous applications, we refer the reader to, for example, \cite{A1, A2,CaglarYe2016, GaZ, HaberlFranz2009, Hoehner2022, Hug1995, Lei1986,LudR,Lu1, MW2,SW5,WG2007,Werner2012a,werner2008new,WernerYe2010,Ye2012,Ye2015b,Ye2016a,Zhang2007}. 

   \vspace{2mm}

	The next lemma is  a general form of Minkowski's determinant inequality (see, for example, \cite[Lemma 8.C.1]{BFR-book}).
	
	\bl\label{det-lemma} Let $A$ and $B$ be two $k\times k$ symmetric, positive semidefinite matrices. Then for any $a,b\geq 0$ which are not both zero and all $m\geq k$,
 \[
\det(aA+bB)^{\frac{1}{m}} \geq (a+b)^{\frac{k}{m}-1}\left(a\det(A)^{\frac{1}{m}}+b\det(B)^{\frac{1}{m}}\right).
 \]
If $A$ and $B$ are positive definite, then equality holds if and only if $A=B$. 
 \el

 \begin{proof}
  We adapt the proof of Lemma 8.C.1 in \cite{BFR-book}. We may assume that $m=k$, and that $A=(a_{ij})$ and $B=(b_{ij})$ are positive definite, and hence are diagonal matrices with positive eigenvalues. We first show that for any $\lambda\in[0,1]$, 
  \begin{align}\label{matrix-ineq}
     \lambda\det(A)^{\frac{1}{m}}+(1-\lambda)\det(B)^{\frac{1}{m}} &=\lambda\left(\prod_{i=1}^m a_{ii}\right)^{\frac{1}{m}}+(1-\lambda)\left(\prod_{i=1}^m b_{ii}\right)^{\frac{1}{m}} \nonumber \\
     &\leq \left(\prod_{i=1}^m(\lambda a_{ii}+(1-\lambda) b_{ii})\right)^{\frac{1}{m}}=\det(\lambda A+(1-\lambda)B)^{\frac{1}{m}}.
  \end{align}
  By the AM-GM inequality,
  \begin{align*}
      &\lambda\left(\prod_{i=1}^m\frac{a_{ii}}{\lambda a_{ii}+(1-\lambda)b_{ii}}\right)^{\frac{1}{m}}+(1-\lambda)\left(\prod_{i=1}^m\frac{b_{ii}}{\lambda a_{ii}+(1-\lambda)b_{ii}}\right)^{\frac{1}{m}}\\
      &\leq\frac{\lambda}{m}\sum_{i=1}^m\frac{a_{ii}}{\lambda a_{ii}+(1-\lambda)b_{ii}}+\frac{1-\lambda}{m}\sum_{i=1}^m\frac{b_{ii}}{\lambda a_{ii}+(1-\lambda)b_{ii}}=1.
  \end{align*}
This proves \eqref{matrix-ineq}. 

For the general case, let $\lambda=a/(a+b)$. Since $a,b\geq 0$ are not both zero, we have $\lambda\in[0,1]$. Thus, by \eqref{matrix-ineq}  we get
\begin{align*}
    \det(aA+bB)^{\frac{1}{m}}
    &=\det\left((a+b)\left(\frac{a}{a+b}A+\frac{b}{a+b}B\right)\right)^{\frac{1}{m}}\\
    &=(a+b)^{\frac{k}{m}}\det\left(\frac{a}{a+b}A+\frac{b}{a+b}B\right)^{\frac{1}{m}}\\
    &\geq(a+b)^{\frac{k}{m}}\left(\frac{a}{a+b}\det(A)^{\frac{1}{m}}+\frac{b}{a+b}\det(B)^{\frac{1}{m}}\right)\\
    &=(a+b)^{\frac{k}{m}-1}\left(a\det(A)^{\frac{1}{m}}+b\det(B)^{\frac{1}{m}}\right).
\end{align*}
 \end{proof}

 \subsubsection{Results for the $L_\phi$ general affine surface areas}
	
	The main result of this section is the following concavity property of the $L_\phi$ surface area with respect to the graph sum.

 	\bt\label{BMaffine}
 Let $\phi\in\fconc$ and $a,b\geq 0$, with $a,b$ not both zero. 
   Suppose that the function $G(t)=\phi(t^{n+1})$ is concave for $t\in(0,\infty)$. Let $K_{1},K_{2}\in\mathcal{K}_o^n$  be such that $K_1|u^\perp=K_2|u^\perp$. Then we have
			\[
   \as_\phi\left(a\diamond_u K_1\diamondplus_u b\diamond_u K_2\right) \geq \frac{a}{a+b}\cdot\as_\phi((a+b)\diamond_u K_1)+\frac{b}{a+b}\cdot\as_\phi((a+b)\diamond_u K_2).\]
	\et

 \begin{proof} 
The argument follows along the same lines as Ye's original proof of \cite[Theorem 3.5]{Ye2013}. To begin, recall that $\as_\phi$ is affine invariant, i.e., for all linear transformations $T$ with $|\det(T)|=1$, we have $\as_\phi(TK)=\as_\phi(K)$. Thus, without loss of generality, we may assume that $u=e_n$.

For $i=1,2$, let $K_i\approx(f_i,-g_i)_{e_n}$. For $x'\in K_1|e_n^\perp=K_2|e_n^\perp$, denote by
		\begin{align*}
		\widetilde{f}_{e_n}(x')&:=
		af_{1}(x')+bf_{2}(x')\\
		\widetilde{g}_{e_n}(x')&:=ag_{1}(x')+bg_{2}(x')
		\end{align*} 
         the overgraph and undergraph functions of $a\diamond_{e_n} K_1\diamondplus_{e_n} b\diamond_{e_n} K_2$, respectively.  Note that  $\langle\widetilde{f}_{e_n}(x)\rangle=a\langle f_1(x)\rangle+b\langle f_2(x)\rangle$ and $\langle\widetilde{g}_{e_n}(x)\rangle=a\langle g_1(x)\rangle+b\langle g_2(x)\rangle$.
		By Lemma \ref{det-lemma} with $k=n-1$ and $m=n+1$, for almost all $x'\in K_1|e_n^\perp$,
		\begin{align}\label{det-ineq1}
		\left|\det\left(\hess(\widetilde{f}_{e_n}(x'))\right)\right|^{\frac{1}{n+1}} &= \left|\det\left(a\hess(f_1(x'))+b\hess(f_2(x'))\right)\right|^{\frac{1}{n+1}} \nonumber\\
  &\geq (a+b)^{-\frac{2}{n+1}}\left[a\left|\det\left(\hess(f_1(x'))\right)\right|^{\frac{1}{n+1}}+b\left|\det\left(\hess(f_2(x'))\right)\right|^{\frac{1}{n+1}}\right].
		\end{align}  
  By the definition of $G$, 
  \begin{equation}\label{det-identity1}
\phi\left(\frac{|\det(\hess(\widetilde{f}_{e_n}(x')))|}{\langle\widetilde{f}_{e_n}(x')\rangle^{n+1}}\right)=G\left(\frac{|\det(\hess(\widetilde{f}_{e_n}(x')))|^{\frac{1}{n+1}}}{\langle\widetilde{f}_{e_n}(x')\rangle}\right)
=G\left(\frac{|\det(\hess(\widetilde{f}_{e_n}(x')))|^{\frac{1}{n+1}}}{a\langle f_1(x')\rangle+b\langle f_2(x')\rangle}\right).
  \end{equation}
		Since $\phi$ is an increasing function on $(0,\infty)$, so too is $G$. Thus, by \eqref{det-identity1}, \eqref{det-ineq1} and the concavity of $G(t)$ on $(0,\infty)$, we get 
		\begin{align*} \nonumber
				\phi\left(\frac{|\det(\hess(\widetilde{f}_{e_n}(x')))|}{\langle \widetilde{f}_{e_n}(x')\rangle^{n+1}}\right)
   &\geq G\left(\frac{(a+b)^{-\frac{2}{n+1}}\left[a\left|\det(\hess(f_1(x')))\right|^{\frac{1}{n+1}}+b\left|\det(\hess(f_2(x')))\right|^{\frac{1}{n+1}}\right]}{a\langle f_1(x')\rangle+b\langle f_2(x')\rangle}\right)\\
		&\geq \frac{a\langle f_1(x')\rangle}{a\langle f_1(x')\rangle+b\langle f_2(x')\rangle}\cdot G\left(\frac{(a+b)^{-\frac{2}{n+1}}|\det(\hess(f_1(x')))|^{\frac{1}{n+1}}}{\langle f_1(x')\rangle}\right)\\
  &+\frac{b\langle f_2(x')\rangle}{a\langle f_1(x')\rangle+b\langle f_2(x')\rangle}\cdot G\left(\frac{(a+b)^{-\frac{2}{n+1}}|\det(\hess(f_2(x')))|^{\frac{1}{n+1}}}{\langle f_2(x')\rangle}\right)\\
  &= \frac{a\langle f_1(x')\rangle}{a\langle f_1(x')\rangle+b\langle f_2(x')\rangle}\cdot G\left(\frac{|\det(\hess((a+b)f_1(x')))|^{\frac{1}{n+1}}}{\langle(a+b) f_1(x')\rangle}\right)\\
  &+\frac{b\langle f_2(x')\rangle}{a\langle f_1(x')\rangle+b\langle f_2(x')\rangle}\cdot G\left(\frac{|\det(\hess((a+b)f_2(x')))|^{\frac{1}{n+1}}}{\langle (a+b)f_2(x')\rangle}\right).
		\end{align*}
        
        Similarly,
		\begin{align*} \nonumber
\phi\left(\frac{|\det(\hess(\widetilde{g}_{e_n}(x')))|}{\langle \widetilde{g}_{e_n}(x')\rangle^{n+1}}\right)
			&\geq \frac{a\langle g_1(x')\rangle}{a\langle g_1(x')\rangle+b\langle g_2(x')\rangle}\cdot G\left(\frac{|\det(\hess((a+b)g_1(x')))|^{\frac{1}{n+1}}}{\langle (a+b)g_1(x')\rangle}\right)\\
  &+\frac{b\langle g_2(x')\rangle}{a\langle g_1(x')\rangle+b\langle g_2(x')\rangle}\cdot G\left(\frac{|\det(\hess((a+b)g_2(x')))|^{\frac{1}{n+1}}}{\langle (a+b)g_2(x')\rangle}\right).
		\end{align*}
		Therefore, using also Lemma \ref{Equivalent:definition:affine:surface:area} and Proposition \ref{proposition}(i),  and using $K_1|e_n^\perp=K_2|e_n^\perp$, we obtain 
		\begin{align*} 
			&\phantom{=}\as_{\phi}\left(a\diamond_{e_n} K_1\diamondplus_{e_n} b\diamond_{e_n} K_2\right)\\
			&=\int_{(a\diamond_{e_n} K_1\diamondplus_{e_n} b\diamond_{e_n} K_2)|e_n^\perp}\left[\phi\left(\frac{|\det(\hess(\widetilde{f}_{e_n}(x')))|}{\langle \widetilde{f}_{e_n}(x')\rangle^{n+1}}\right)\langle \widetilde{f}_{e_n} (x')\rangle
			+\phi\left(\frac{|\det(\hess(\widetilde{g}_{e_n}(x')))|}{\langle  \widetilde{g}_{e_n}(x')\rangle^{n+1}}\right)\langle  \widetilde{g}_{e_n} (x')\rangle\right] dx'\\
            &=\int_{(K_1|e_n^\perp)\cap(K_2|e_n^\perp)}\phi\left(\frac{|\det(\hess(\widetilde{f}_{e_n}(x')))|}{\langle \widetilde{f}_{e_n}(x')\rangle^{n+1}}\right)\langle \widetilde{f}_{e_n} (x')\rangle\, dx'\\
			&\hspace{4cm}+\int_{(K_1|e_n^\perp)\cap(K_2|e_n^\perp)}\phi\left(\frac{|\det(\hess( \widetilde{g}_{e_n}(x')))|}{\langle  \widetilde{g}_{e_n}(x')\rangle^{n+1}}\right)\langle  \widetilde{g}_{e_n} (x')\rangle\, dx'\\
			&\geq a\int_{K_1|e_n^\perp}\bigg[\langle f_1(x')\rangle\cdot G\left(\frac{|\det(\hess((a+b)f_1(x')))|^{\frac{1}{n+1}}}{\langle (a+b)f_1(x')\rangle}\right)\\
   &\hspace{4cm}+\langle g_1(x')\rangle\cdot G\left(\frac{|\det(\hess((a+b)g_1(x')))|^{\frac{1}{n+1}}}{\langle (a+b)g_1(x')\rangle}\right)\bigg]\,dx'\\
   &+b\int_{K_2|e_n^\perp}\bigg[\langle f_2(x')\rangle\cdot G\left(\frac{|\det(\hess((a+b)f_2(x')))|^{\frac{1}{n+1}}}{\langle(a+b) f_2(x')\rangle}\right)\\
   &\hspace{4cm}+\langle g_2(x')\rangle\cdot G\left(\frac{|\det(\hess((a+b)g_2(x')))|^{\frac{1}{n+1}}}{\langle (a+b)g_2(x')\rangle}\right)\bigg]\,dx'\\
   &=\frac{a}{a+b}\cdot\as_\phi((a+b)\diamond_{e_n} K_1)+\frac{b}{a+b}\cdot\as_\phi((a+b)\diamond_{e_n} K_2).
		\end{align*}
    		\end{proof}

In the special case $a+b=1$, we obtain the following

  \bc\label{as-cor-1}
Let $\phi\in{\rm Conc}(0,\infty)$ and $\lambda\in[0,1]$ be given. Suppose that the function $G(t)=\phi(t^{n+1})$ is concave for $t\in(0,\infty)$. Let $K_1,K_2\in\cK_o^n$ be such that $K_1|e_n^\perp=K_2|e_n^\perp$. Then we have
\[
\as_\phi(\lambda\diamond_{e_n} K_1\diamondplus_{e_n}(1-\lambda)\diamond_{e_n} K_2)\geq \lambda\as_\phi(K_1)+(1-\lambda)\as_\phi(K_2).
\]
  \ec

	\begin{remark}
	Let $K\in\mathcal{K}_o^n$ and  $\phi\in\fconc$, and suppose that the function $G(t)=\phi(t^{n+1})$  is concave for $t\in
		(0, \infty)$. Ye \cite[Theorem 3.5]{Ye2013} proved that the $L_\phi$ affine surface area is monotone with respect to Steiner symmetrization: 
		\begin{equation*}\as_{\phi} (K)\leq \as_{\phi}(S_{e_n}(K)).
		\end{equation*}  
  	Recalling that $\frac{1}{2} \diamond_{e_n}  K\diamondplus_{e_n}  \frac{1}{2}\diamond_{e_n}  R_{e_n}(K)=S_{e_n}(K)$, we see that Theorem \ref{BMaffine} extends \cite[Theorem 3.5]{Ye2013} from the Steiner symmetral of a single convex body to the graph sum of two convex bodies. 
	\end{remark}
  
		A convex body $K\in\cK_o^n$ is said to have {\it curvature function} $f_K: \Sp\rightarrow \R$ if 
		\[ 
        V(L, \underbrace{K, \ldots, K}_{n-1})=\frac{1}{n}\int_{\Sp}h_L(u)f_K(u)\,d\sigma (u),\]  
        where 
		$ V(L, {K, \ldots, K})$ is the mixed volume and $\sigma$ is the classical spherical measure on $\Sp$. 	If  $K\in\cK_o^n$ has curvature function $f_K$  and  $\phi\in \fconc$,  then the
		\emph{$L_{\phi}^*$ affine surface area} $\as_{\phi}^*(K)$ (see  \cite{Ludwig2009}) has the form
		\[
  \as_{\phi}^*(K)=\int_{\Sp}\phi(f_{-n}(K,u))h_K(u)^{-n}\,d\sigma(u),
  \]
		where $f_{-n}(K,u)=h_K(u)^{n+1}f_K(u)$ is the $L_p$ curvature
		function of $K$  for $p=-n$ (see \cite{Lu1}). Ludwig  \cite{Ludwig2009} proved that if $K\in\cK_o^n$ has a curvature function, then 
	\begin{equation}\label{duality:concave}
			\as_{\phi}^*(K)=\as_{\phi}(K^\circ),
   \end{equation} 
   where $K^\circ=\{x\in\R^n:\,\forall y\in K,\,\langle x,y\rangle\leq 1\}$ is the polar body of $K$. 
   For $\psi\in\fconv$, the corresponding formulation and result holds for the $L_\psi^*$ affine surface area $\as_\psi^*(K)$ as well \cite{Ludwig2009}. We thus obtain the following 

		\bc\label{uniqueness-1-1-1}
		Let $\phi\in \fconc$  be given. Let $K_{1},K_{2}\in\mathcal{K}^n_o$ be convex bodies with curvature functions  and which satisfy $K_1|e_n^\perp=K_2|e_n^\perp$, and suppose that $[\lambda\diamond_{e_n} K_1^\circ\diamondplus_{e_n}(1-\lambda)\diamond_{e_n} K_2^\circ]^\circ$ has a curvature function.  If the function $G(t)=\phi(t^{n+1})$ is concave  for $t\in(0,\infty)$, then for every $\lambda\in[0,1]$ we have
			\[
			\lambda\as_\phi^*(K_1)+(1-\lambda)\as_\phi^*(K_2)\leq \as_\phi^*([\lambda\diamond_{e_n} K_1^\circ\diamondplus_{e_n}(1-\lambda)\diamond_{e_n} K_2^\circ]^\circ).
			\]
   \ec
		
		\begin{remark}
		Let $\phi\in\fconc$ and suppose that
			$G(t)=\phi(t^{n+1})$ is concave for $t\in (0, \infty)$. Ye \cite[Corollary 3.6]{Ye2013} proved that  for every $K\in\mathcal{K}_o^n$ with a curvature function and for which $[S_{e_n}(K^\circ)]^\circ$ has a curvature function, we have 
			\[
			\as_{\phi}^{*}(K) \leq \as_{\phi}^{*}\left([S_{e_n}(K^{\circ})]^{\circ}\right).
			\]Thus, in the special case  $K_2=R_{e_n}(K_1)$ and $\lambda=1/2$,  Corollary \ref{uniqueness-1-1-1} reduces to  \cite[Corollary 3.6]{Ye2013}.
   \end{remark}

   
\subsubsection{Results for the $L_\psi$ general affine surface areas}

   In the case of the $L_\psi$ general affine surface areas, we deduce the following results. Their proofs follow in completely analogous fashion to those above for the $L_\phi$ general surface areas, and hence are omitted.

   	\bt\label{BMaffine-2}
 Let $\psi\in\fconv$ and $a,b\geq 0$, with $a,b$ not both zero. 
   Suppose that the function $G(t)=\psi(t^{n+1})$ is convex for $t\in(0,\infty)$. Let $K_{1},K_{2}\in\mathcal{K}_o^n$ be such that $K_1|e_n^\perp=K_2|e_n^\perp$, and assume that $K_1,K_2$ have positive Gaussian curvature almost everywhere. Then 
			\[
 \as_\psi\left(a\diamond_{e_n} K_1\diamondplus_{e_n} b\diamond_{e_n} K_2\right) \leq \frac{a}{a+b}\cdot\as_\psi((a+b)\diamond_{e_n} K_1)+\frac{b}{a+b}\cdot\as_\psi((a+b)\diamond_{e_n} K_2).
			\]
	\et

In the special case $a+b=1$, we obtain the following
      \bc\label{as-phi-cor-1}
Let $\psi\in\fconv$ and $\lambda\in[0,1]$ be given. Suppose that the function $G(t)=\psi(t^{n+1})$ is convex for $t\in(0,\infty)$. Let $K_1,K_2\in\cK_o^n$ be such that $K_1|e_n^\perp=K_2|e_n^\perp$, and assume that $K_1,K_2$ have positive Gaussian curvature almost everywhere. Then we have
\[
\as_\psi(\lambda\diamond_{e_n} K_1\diamondplus_{e_n}(1-\lambda)\diamond_{e_n} K_2)\leq \lambda\as_\psi(K_1)+(1-\lambda)\as_\psi(K_2).
\]
  \ec

 	\begin{remark}
	Let $K\in\mathcal{K}_o^n$ and  $\psi\in\fconv$, and suppose that the function $G(t)=\phi(t^{n+1})$  is convex for $t\in
		(0, \infty)$. Ye \cite[Theorem 3.10]{Ye2013} proved that if $K$ has positive Gaussian curvature almost everywhere, then
		\begin{equation*}\as_{\psi} (K)\geq \as_{\psi}(S_{e_n}(K)).
		\end{equation*}  
  	Again recalling that $\frac{1}{2} \diamond_{e_n}  K\diamondplus_{e_n}  \frac{1}{2}\diamond_{e_n}  R_{e_n}(K_1)=S_{e_n}(K)$, we see that Corollary \ref{as-phi-cor-1} extends \cite[Theorem 3.10]{Ye2013} from the case of the Steiner symmetral of a single convex body to the graph sum of two convex bodies. 
	\end{remark}

		\bc\label{uniqueness-1-1-2}
		Let $\psi\in \fconv$  be given. Let $K_{1},K_{2}\in\mathcal{K}^n_o$ be convex bodies with curvature functions  and which satisfy $K_1|e_n^\perp=K_2|e_n^\perp$, and suppose that $[\lambda\diamond_{e_n} K_1^\circ\diamondplus_{e_n}(1-\lambda)\diamond_{e_n} K_2^\circ]^\circ$ has a curvature function. If the function $G(t)=\psi(t^{n+1})$ is convex  for $t\in(0,\infty)$, then 
			\[
			\lambda\as_\psi^*(K_1)+(1-\lambda)\as_\psi^*(K_2)\geq \as_\psi^*([\lambda\diamond_{e_n} K_1^\circ\diamondplus_{e_n}(1-\lambda)\diamond_{e_n} K_2^\circ]^\circ).
			\]
   \ec

		\begin{remark}
		Let $\psi\in\fconv$ and suppose that
			$G(t)=\psi(t^{n+1})$ is convex for $t\in (0, \infty)$. Ye \cite[Corollary 3.11]{Ye2013} proved that for every $K\in\mathcal{K}_o^n$ with a curvature function and for which $[S_{e_n}(K^\circ)]^\circ$ has a curvature function, we have 
			\[
			\as_{\psi}^{*}(K) \geq \as_{\psi}^{*}\left([S_{e_n}(K^{\circ})]^{\circ}\right).
			\]Thus, in the special case  $K_2=R_{e_n}(K_1)$,  Corollary \ref{uniqueness-1-1-2} reduces to   \cite[Corollary 3.11]{Ye2013}.
   \end{remark}
\subsection{Minkowski's first inequality for the $L_p$ affine surface area of a graph sum}

For $K\in\cK_o^n$ and $p\neq -n$, the \emph{$L_p$ affine surface area} $\as_p(K)$	is defined by
     \begin{equation}
         \as_p(K)=\int_{\partial K}\frac{\kappa_K(x)^{\frac{p}{n+p}}}{\langle x,N_K(x)\rangle^{\frac{n(p-1)}{n+p}}}\,d\mu_{\partial K}(x),
     \end{equation}
provided the integral exists. The $L_p$ affine surface area is a key affine invariant in the $L_p$ Brunn--Minkowski theory. It was introduced by Lutwak \cite{Lu1} for $p> 1$, and by Sch\"utt and Werner \cite{SW5} for $p<1$. In the special case $p=1$, the functional $\as_1$ is the classical affine surface area, originally due to Blaschke \cite{Bl1} for sufficiently smooth convex bodies. In addition to its fundamental importance in affine differential geometry, the classical affine surface area has found numerous applications, including to, e.g.,  valuation theory \cite{A1,A2} and the approximation of convex bodies by polytopes \cite{Boe-2000,glasgrub,ludwig1999}. It was also studied in the affine plateau problem, solved in $\R^3$ by Trudinger and Wang \cite{TW2004}.

     The $L_p$ affine surface area is  invariant under affine transformations, and it is homogeneous of degree $n(n-p)/(n+p)$, that is,
     \[
\forall t>0,\quad \as_p(tK)=t^{\frac{n(n-p)}{n+p}}\as_p(K).
     \]We remark also that the $L_p$ affine surface area itself has found numerous applications, including to, e.g., the  information theory of convex bodies \cite{JW,PW,SW5,Werner2012a} and $L_p$ affine isoperimetric inequalities \cite{lutwak2000lp,werner2008new,WernerYe2010}. 

Given $u\in\Sp$ and any $K_1,K_2\in\cK_o^n$  satisfying $K_1|u^\perp=K_2|u^\perp$, we set
		\begin{equation}\label{graph-mixed-def}
		S_{u,p}^\diamondplus(K_1,K_2) :=\lim_{\varepsilon \to 0^+} \frac{\as_p(K_1\diamondplus_{u}\varepsilon \diamond_{u} K_2) -\as_p(K_1)}{\varepsilon}.
		\end{equation}
        
     \noindent  The homogeneity of the $L_p$ affine surface area allows us to deduce the following Minkowski's first inequality for a graph combination of convex bodies.  
     	\bt 
		Let  $p\in (0,1)$. For all  $K_1,K_2\in\cK_o^n$ satisfying  $K_1|u^\perp=K_2|u^\perp$, we have   
			\begin{equation}\label{MFI-asp}
			S_{u,p}^\diamondplus(K_1, K_2) \geq \as_p(K_2)+\frac{(n-p)(n-1)}{n+p}\cdot\as_p(K_1).
			\end{equation}
			If, furthermore, $\as_p(K_1) = \as_p(K_2)>0$, then
			\[
			S_{u,p}^\diamondplus(K_1,K_2) \geq \frac{n(n-p)}{n+p}\cdot\as_p(K_1).
			\] 
            If $p\in (-n,0)$  and $K_1,K_2$ have positive curvature almost everywhere, then the previous inequalities reverse.
		\et

        \begin{proof}
            Let $p\in(0,1)$,  and for $t\in(0,\infty)$, set  $\phi_p(t):=t^{\frac{p}{n+p}}$. Note that $\phi_p\in{\rm Conc}(0,\infty)$. Since $0<(np+p)/(n+p)<1$, the function $G_p(t)=\phi_p(t^{n+1})$ is strictly concave for $t\in(0,\infty)$. Thus, the hypotheses of Theorem \ref{BMaffine} are satisfied by the function $\phi_p$.

            Next, we again adapt the standard arguments to prove the Minkowski's first inequality, modified for the setting of the graph sum.  Substituting $\varepsilon=t/(1-t)$ into \eqref{graph-mixed-def}, and using Proposition \ref{graph-proposition} and the homogeneity of the $L_p$ affine surface area, we derive that
            \begin{align*}
                S_{u,p}^\diamondplus(K_1,K_2) &=\lim_{t\to 0^+}\frac{\as_p\left(\frac{1}{1-t}\diamond_u[(1-t)\diamond_u K_1\diamondplus_u t\diamond_u K_2]\right)-\as_p(K_1)}{t/(1-t)} \\
                &=\lim_{t\to 0^+}\frac{(1-t)^{-\frac{n(n-p)}{n+p}}\as_p\left((1-t)\diamond_u K_1\diamondplus_u t\diamond_u K_2\right)-\as_p(K_1)}{t/(1-t)} \\
                 &=\lim_{t\to 0^+}\frac{\as_p\left((1-t)\diamond_u K_1\diamondplus_u t\diamond_u K_2\right)-(1-t)^{\frac{n(n-p)}{n+p}}\as_p(K_1)}{t(1-t)^{\frac{(n-p)(n-1)}{n+p}}} \\
                 &=\lim_{t\to 0^+}\frac{\as_p((1-t)\diamond_u K_1\diamondplus_u t\diamond_u K_2)-\as_p(K_1)}{t}+\lim_{t\to 0^+}\frac{\left[1-(1-t)^{\frac{n(n-p)}{n+p}}\right]\as_p(K_1)}{t}\\
                &=\lim_{t\to 0^+}\frac{\as_p((1-t)\diamond_u K_1\diamondplus_u t\diamond_u K_2)-\as_p(K_1)}{t}+\frac{n(n-p)}{n+p}\cdot\as_p(K_1).
            \end{align*}
    We now set $f_p(t):=\as_p((1-t)\diamond_u K_1\diamondplus_u t\diamond_u K_2)$. Note that $f_p(0)=\as_p(K_1)$ and $f_p(1)=\as_p(K_2)$.  The desired inequality \eqref{MFI-asp} is equivalent to $f_p'(0)\geq f_p'(1)-f_p'(0)$, and this latter inequality holds by Theorem \ref{BMaffine}. Therefore,
    \[
S_{u,p}^\diamondplus(K_1,K_2)-\frac{n(n-p)}{n+p}\cdot\as_p(K_1)\geq\as_p(K_2)-\as_p(K_1),
    \]
    which is equivalent to \eqref{MFI-asp}. The result follows for the case $p\in(0,1)$.

            In the case $p\in(-n,0)$,  for $t\in(0,\infty)$ we set $\psi_p(t):=t^{\frac{p}{n+p}}$, and note that $\psi_p\in{\rm Conv}(0,\infty)$. Since $(np+p)/(n+p)<0$, the function $G_p(t)=\psi_p(t^{n+1})$ is convex on $(0,\infty)$. Thus, the hypotheses of Theorem \ref{BMaffine-2} are satisfied, and the rest of the proof follows in the same way as before.
        \end{proof}
 

\section{$L_p$ chord combinations of convex bodies}\label{Lp-chord-section}

Recall that  for $a,b,s,t>0$ and $p\in[-\infty,+\infty]$, the \emph{$p$-means} $M_p^{(a,b)}(s,t)$ are defined by
\begin{equation}\label{pmeans-def}
M_p^{(a,b)}(s,t)=\begin{cases}
(as^p+bt^p)^{1/p}, &\text{ if }p\in\R\setminus\{0\};\\
s^a t^b, &\text{ if }p=0;\\
\min\{s,t\}, &\text{ if }p=-\infty;\\
\max\{s,t\}, &\text{ if }p=+\infty.
\end{cases}
\end{equation}

\noindent The main concept of this section is the following

\begin{defn}\label{main-def}  
	Let $K_1$ and $K_2$ be convex bodies in $\R^n$, $a,b>0$, $p\in[-\infty,+\infty]$, and  $u\in\Sp$. The \emph{$L_p$ chord combination} $a\odot_{u,p} K_1\oplus_{u,p} b\odot_{u,p} K_2$ is defined so that the chord $(a\odot_{u,p}K_1\oplus_{u,p}b\odot_{u,p}K_2)\cap(x'+\R u)$  in the direction $u$ and passing through $x'\in u^\perp$  has length
 \begin{align*}
   \ell_{a\odot_{u,p}K_1\oplus_{u,p}b\odot_{u,p}K_2}(u;x')=M_p^{(a,b)}(\ell_{K_1}(u;x'),\ell_{K_2}(u;x'))
 \end{align*}
 and is bisected by $u^\perp$. The \emph{$L_p$ chord dilation} $a\odot_{u,p}K_1$ is defined so that the chord $(a\odot_{u,p}K_1)\cap(x'+\R u)$ has length 
 \[
\ell_{a\odot_{u,p}K_1}(u;x')=a^{1/p}\ell_{K_1}(u;x')
 \]
 and is bisected by $u^\perp$.
	\end{defn}

Put another way,
\begin{align*}
     &\phantom{:=}a\odot_{u,p} K_1\oplus_{u,p} b\odot_{u,p} K_2 \\
     &:=\bigg\{x'+tu:\, x'\in u^\perp,
     -\frac{1}{2}M_p^{(a,b)}(\ell_{K_1}(u;x'),\ell_{K_2}(u;x'))\leq t\leq \frac{1}{2}M_p^{(a,b)}(\ell_{K_1}(u;x'),\ell_{K_2}(u;x'))\bigg\}.
\end{align*} 
Thus, by construction, the $L_p$ chord combination and dilation are symmetric about $u^\perp$. 
We also denote $K_1\oplus_{u,p} K_2:=1\odot_{u,p} K_1\oplus_{u,p} 1\odot_{u,p} K_2$. In particular, choosing $p=1$, $a=\lambda\in(0,1)$, $b=1-\lambda$ and $K_1=K_2=K$, we recover the classical Steiner symmetral of a convex body $K$ with respect to $u^\perp$: 
  \begin{equation}\label{steiner-Lp-recovery}
  S_u(K)=\lambda\odot_{u,1} K\oplus_{u,1}(1-\lambda)\odot_{u,1} K.
  \end{equation}
  One may also obtain the Steiner symmetral by replacing one or both instances of $K$ by $R_u(K)$ in \eqref{steiner-Lp-recovery}.  

Using the convention $1/(\pm\infty)=0$, we  have $a\odot_{u,\pm\infty}K=K+tu$ for some $t\in\R$.  Since $K_1|u^\perp=K_2|u^\perp$, we also have $K_1\oplus_{u,-\infty}K_2 = (K_1\cap K_2)+su$ and $K_1\oplus_{u,-\infty}K_2 = (K_1\cup K_2)+tu$ for some $s,t\in\R$.   

\begin{remark}
    Let $K_1,K_2\in\cK^n$ be convex bodies with common projections  $K_1|u^\perp=K_2|u^\perp$, and assume that $K_1$ and $K_2$ are symmetric about $u^\perp$, i.e., $K_1=R_u(K_1)$ and $K_2=R_u(K_2)$. Then for any $a,b\geq 0$, we have
    \[
a\diamond_u K_1\diamondplus_u b\diamond_u K_2 = a\odot_{u,1}K_1\oplus_{u,1}b\odot_{u,1}K_2.
    \]
    Thus, analogues of the results in Section \ref{graph-section} for graph combinations of convex bodies also hold for the corresponding $L_1$ chord combinations, under the additional assumptions that $K_1$ and $K_2$ are symmetric about $u^\perp$. 
    
\end{remark}

\subsection{Examples of $L_p$ chord sums} 

Consider again the planar convex bodies  $K_1=[-1,1]\times[-1,1]$ and $K_2=B_2^2$, and let $u=e_2$. For $x_1\in[-1,1]$, we have
\[
\ell_{K_1\oplus_{u,p}K_2}(u;x_1) =\begin{cases}
    2\left[1+(1-x_1^2)^{\frac{p}{2}}\right]^{\frac{1}{p}}, &\text{if } p\in\R\setminus\{0\};\\
    4\sqrt{1-x_1^2}, &\text{if }p=0;\\
    2\sqrt{1-x_1^2},&\text{if }p=-\infty;\\
    2, &\text{if }p=+\infty.
\end{cases}
\]
Thus, for $p\in\R\setminus\{0\}$ we have
\begin{equation}\label{Lp-chord-sum-example}
K_1\oplus_{u,p}K_2 = \left\{(x_1,x_2)\in\R^2:\,-1\leq x_1\leq 1,\,-\left[1+(1-x_1^2)\right]^{\frac{p}{2}}\leq x_2\leq \left[1+(1-x_1^2)\right]^{\frac{p}{2}}\right\}.
\end{equation}
Moreover,
\begin{align*}
    K_1\oplus_{u,0}K_2 &= \left\{(x_1,x_2)\in\R^2:\,-1\leq x_1\leq 1,\,-2\sqrt{1-x_1^2}\leq x_2\leq 2\sqrt{1-x_1^2}\right\},
\end{align*}
which is an ellipse with semiaxes 1 along the $x_1$-axis and 2 along the $x_2$-axis. For $p=\pm\infty$, we have $K_1\oplus_{u,-\infty}K_2=K_2$ and $K_1\oplus_{u,+\infty}K_2=K_1$. 

Intuitively, the $L_p$ chord sum and $L_p$ fiber sum are distinct operations on $\cK^n$. On one hand, the $L_p$ chord sum operates locally on the chord lengths of the bodies to create a body which is symmetric about $u^\perp$.  On the other hand, the $L_p$ fiber sum operates globally over all directions via the infimal convolution of the support functions of the bodies. Let us demonstrate these observations with a concrete example.

For $p=1$, we have the support functions $h_{K_1\oplus_{u,1}K_2}(x_1,x_2)=|x_2|+\sqrt{x_1^2+x_2^2}$ and $h_{K_1\capplus_1 K_2}(x_1,x_2)=|x_1|+\sqrt{x_1^2+x_2^2}$. Hence, $h_{K_1\oplus_{u,1}K_2}(1,0)=1\neq 2=h_{K_1\capplus_1 K_2}(1,0)$, so $K_1\oplus_{u,1}K_2$ and $K_1\medcapplus_1 K_2$ are distinct bodies. For $p=2$, we have $h_{K_1\oplus_{u,2}K_2}(x_1,x_2)=\sqrt{2}\sqrt{x_1^2+x_2^2}$ and $h_{K_1\capplus_2 K_2}(x_1,x_2)$ is given by the piecewise function in \eqref{example-support-fn-2}. Hence, $K_1\oplus_{u,2}K_2$ and $K_1\medcapplus_2 K_2$ are distinct bodies. Therefore, in general the $L_p$ chord sum and $L_p$ fiber sum are distinct operations on $\cK^2$.

Next, we show that the $L_1$ chord sum and the graph sum are distinct operations on $\cK^2$. Intuitively, this is clear because the $L_1$ chord sum produces a body which is symmetric about $u^\perp$, whereas the graph sum does not, in general, have this property. To illustrate, consider $K_3=\{(x_1,x_2)\in\R^2:\, -1\leq x_1\leq 1,\, x_1^2-1\leq x_2\leq 1\}$. For $u=e_2$ and $x_1\in[-1,1]$, we have $\ell_{K_1\oplus_{u,1}K_3}(u;x_1)=\ell_{K_1}(u;x_1)+\ell_{K_3}(u;x_1)=4-x_1^2$. Hence,
\[
K_1\oplus_{u,1}K_3 = \left\{(x_1,x_2)\in\R^2:\,-1\leq x_1\leq 1,\,\frac{x_1^2}{2}-2\leq x_2\leq 2-\frac{x_1^2}{2}\right\}.
\]
Recall that the graph sum of $K_1$ and $K_3$ is
\[
K_1\diamondplus_u K_3=\left\{(x_1,x_2)\in\R^2:\,-1\leq x_1\leq 1,\, x_1^2-2\leq x_2\leq 2\right\}.
\]
Hence, $K_1\oplus_{u,1}K_3$ and $K_1\diamondplus_u K_3$ are distinct bodies (see Figure 3 below). 


\begin{center}
\begin{tikzpicture}[scale=1.1]
    \draw[->] (-2.5,0) -- (2.5,0) node[right] {$x_1$};
    \draw[->] (0,-3.5) -- (0,3.5) node[above] {$x_2$};

    \node at (0,0) [below left] {$o$};

    \draw[dashed, black] (1, -3.5) -- (1, 3.5);
    \node at (1, -3.6) [below, font=\small] {$x_1=1$};
    \draw[dashed, black] (-1, -3.5) -- (-1, 3.5);
    \node at (-1, -3.6) [below, font=\small] {$x_1=-1$};

    \draw[dashed, red] (-2.5, 2) -- (2.5, 2);
    \node at (-4, 2) [right, font=\small, red] {$x_2=2$};

    \draw[dashed, red, samples=100, domain=-1.5:1.5] plot (\x, {-(2-\x*\x)});
    \node at (-3.8, -0.4) [right, font=\small, red] {$x_2=-(2-x_1^2)$};

    \draw[dashed, blue, samples=100, domain=-1.5:1.5] plot (\x, {2-\x*\x/2});
    \node at (-4, 1) [right, font=\small, blue] {$x_2=2-x_1^2/2$};
    \draw[dashed, blue, samples=100, domain=-1.5:1.5] plot (\x, {-(2-\x*\x/2)});
    \node at (-4, -1.3) [right, font=\small, blue] {$x_2=-(2-x_1^2/2)$};


    \fill[blue!20, opacity=0.7]
        plot[domain=-1:1, samples=100] (\x, {2 - \x*\x/2}) --
        plot[domain=1:-1, samples=100] (\x, {- (2 - \x*\x/2)}) -- cycle;

    \draw[blue, thick, samples=100, domain=-1:1] plot (\x, {2 - \x*\x/2});
    \draw[blue, thick, samples=100, domain=-1:1] plot (\x, {- (2 - \x*\x/2)});
    \draw[blue, thick] (-1, {2 - (-1)*(-1)/2}) -- (-1, {- (2 - (-1)*(-1)/2)});
    \draw[blue, thick] (1, {2 - (1)*(1)/2}) -- (1, {- (2 - (1)*(1)/2)});

    \node[blue, anchor=south east] at (3.4, -2) {$K_1\oplus_{u,1}K_3$};


    \fill[red!20, opacity=0.5]
        (-1, 2) -- (1, 2) -- 
        plot[domain=1:-1, samples=100] (\x, {\x*\x - 2}) -- cycle;

    \draw[red, thick, samples=100, domain=-1:1] plot (\x, {\x*\x - 2});
    \draw[red, thick] (-1, 2) -- (1, 2); 
    \draw[red, thick] (-1, {(-1)*(-1) - 2}) -- (-1, 2);
    \draw[red, thick] (1, {(1)*(1) - 2}) -- (1, 2);

    \node[red, anchor=north east] at (3.2, 2.6) {$K_1\diamondplus_u K_3$};

\end{tikzpicture}
\end{center}
\noindent{\footnotesize{\bf Figure 3:} The graph sum $K_1\diamondplus_u K_3$ and the $L_1$ chord sum $K_1\oplus_{u,1}K_3$ are illustrated in this figure. Observe that $K_1\diamondplus_u K_3$ is not symmetric about $u^\perp$.}

 \subsection{Properties of $L_p$ chord combinations}
	
	Next, we highlight some  basic properties of  $L_p$ chord combinations of convex bodies.
	
	\begin{proposition}\label{proposition}  Let $K_1,K_2\in\mathcal{K}^n$, $p\in[-\infty,+\infty]$, $u\in\Sp$ and $a,b>0$.

		\begin{itemize}
			\item [(i)] (Shadow property)  We have 
			\[
   (a\odot_{u,p} K_1\oplus_{u,p} b\odot_{u,p} K_2)|u^\perp=(K_1|u^\perp)\cap(K_2|u^\perp).
   \]

   \item[(ii)] 	(Commutativity)  It holds that  \[K_1\oplus_{u,p} K_2=K_2\oplus_{u,p} K_1.\]
			
			\item [(iii)] (Associativity of sum) Let $K_3$ also be a convex body in $\mathcal{K}^n$. Then
			$$(K_1\oplus_{u,p} K_2)\oplus_{u,p} K_3=K_1\oplus_{u,p} (K_2\oplus_{u,p}  K_3).$$ 	
			
			\item [(iv)] (Associativity of dilation) It holds that  \[(ab)\odot_{u,p} K=	a\odot_{u,p} 	(b\odot_{u,p} K).\]
             
            \item [(v)] (Distributivity) We have
            \begin{align*}
                a\odot_{u,p} (K_1\oplus_{u,p} K_2)&=a\odot_{u,p} K_1\oplus_{u,p} a\odot_{u,p} K_2
            \end{align*}
            and
            \[
            (a+b)\odot_{u,p}K_1 = a\odot_{u,p}K_1\oplus_{u,p}b\odot_{u,p}K_1.
            \]
			
			\item[(vi)] (Monotonicity) Let $L_1,L_2\in\mathcal{K}^n$ be such that $K_1\subset L_1$ and $K_2\subset L_2$. Then for every $a,b>0$, we have
			$$a\odot_{u,p} K_1\oplus_{u,p} b\odot_{u,p} K_2\subset a\odot_{u,p} L_1\oplus_{u,p} b\odot_{u,p} L_2.$$   

		\end{itemize}
	\end{proposition}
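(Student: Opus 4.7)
The plan is to verify each of the six properties by reducing to elementary properties of the $p$-means $M_p^{(a,b)}$ and of the one-dimensional chord-length functions $\ell_K(u;\cdot)$. By Definition~\ref{main-def}, the $L_p$ chord combination and dilation are each constructed so that every fiber in the direction $u$ over $x'\in u^\perp$ is a segment bisected by $u^\perp$. Consequently, two such sets coincide precisely when their chord-length functions agree on $u^\perp$, and one is contained in another precisely when the first chord-length function is pointwise at most the second on the appropriate common projection. Every claim in the proposition therefore reduces to a one-dimensional scalar identity or inequality that must hold at each $x'$.

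For (i), the fiber of $a\odot_{u,p}K_1\oplus_{u,p}b\odot_{u,p}K_2$ over $x'$ is nondegenerate exactly when $\ell_{K_1}(u;x')>0$ and $\ell_{K_2}(u;x')>0$ (so that $M_p^{(a,b)}$ is defined and positive, using $a,b>0$), i.e.\ when $x'\in\relint(K_i|u^\perp)$ for both $i$; passing to closures yields the stated identity. Property (ii) is immediate from $M_p^{(1,1)}(s,t)=M_p^{(1,1)}(t,s)$. For (iii), using $M_p^{(1,1)}(s,t)=(s^p+t^p)^{1/p}$ for $p\in\R\setminus\{0\}$, the iterated chord over $x'$ on either side reduces to $(\ell_{K_1}(u;x')^p+\ell_{K_2}(u;x')^p+\ell_{K_3}(u;x')^p)^{1/p}$, and (iv) follows from $(ab)^{1/p}=a^{1/p}b^{1/p}$ applied to the chord-length formula of $a\odot_{u,p}(b\odot_{u,p}K)$.

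For (v), the two distributive identities are direct chord-length computations. The first reduces to
\[
a^{1/p}\bigl(\ell_{K_1}(u;x')^p+\ell_{K_2}(u;x')^p\bigr)^{1/p}=\bigl(a\ell_{K_1}(u;x')^p+a\ell_{K_2}(u;x')^p\bigr)^{1/p},
\]
which also equals the chord length of $a\odot_{u,p}K_1\oplus_{u,p}a\odot_{u,p}K_2$; the second reduces to $(a+b)^{1/p}\ell_{K_1}(u;x')=(a\ell_{K_1}(u;x')^p+b\ell_{K_1}(u;x')^p)^{1/p}$. For (vi), the hypotheses $K_i\subset L_i$ give $\ell_{K_i}(u;x')\leq\ell_{L_i}(u;x')$ and $K_i|u^\perp\subset L_i|u^\perp$, and since $M_p^{(a,b)}$ is nondecreasing in each argument, the chord over $x'$ on the left is no longer than on the right, both being bisected by $u^\perp$; set inclusion follows.

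The only place that requires extra care is the endpoint cases $p\in\{0,\pm\infty\}$ in (iii)-(vi), where the power formula is replaced by the weighted geometric mean, $\min$, or $\max$. In each case the needed identity or monotonicity follows from the corresponding property of the substitute binary operation on positive reals (associativity and monotonicity of $\cdot$, $\min$, and $\max$), so the argument is a routine case split rather than a real obstacle.
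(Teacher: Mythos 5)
Your proof takes essentially the same route as the paper's: both reduce each claim to a pointwise identity or inequality on the chord-length functions $\ell_K(u;\cdot)$, using that an $L_p$ chord combination is uniquely determined by its chord-length function and its symmetry about $u^\perp$, and then appeal to elementary properties of the $p$-means (commutativity, associativity, homogeneity, monotonicity). The paper also splits off the endpoint cases $p\in\{0,\pm\infty\}$ in the dilation and distributivity parts exactly as you do, so the argument is the same in both structure and substance.
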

 
	\begin{proof}
		Part (i) follows immediately from Definition \ref{main-def}. Property (ii) also follows directly from Definition \ref{main-def} and the commutativity of the $p$-means \eqref{pmeans-def}: $M_p^{(a,b)}(s,t)=M_p^{(b,a)}(t,s)$. 
		To prove (iii), note that for any $x'\in u^\perp$ we have
  \begin{align*}
    \frac{1}{2}\ell_{(K_1\oplus_{u,p}K_2)\oplus_{u,p}K_3}(u;x') &=M_p^{(1,1)}\left(\ell_{K_1\oplus_{u,p}K_2}(u;x'),\ell_{K_3}(u;x')\right)\\
     &=M_p^{(1,1)}\left(\ell_{K_1}(u;x'),\ell_{K_2\oplus_{u,p}K_3}(u;x')\right)\\
      &=\frac{1}{2}\ell_{K_1\oplus_{u,p}(K_2\oplus_{u,p}K_3)}(u;x').
  \end{align*}
  Since $x'$ was arbitrary and the $L_p$ chord sum is symmetric about $u^\perp$, the claim follows.
  
  To prove (iv), note that $ab\odot_u K=a\odot_u(b\odot_u K)$ and hence
  \begin{align*}
      (ab)\odot_{u,p}K=(ab)^{\frac{1}{p}}\odot_u K=a^{\frac{1}{p}}b^{\frac{1}{p}}\odot_u K=a^{\frac{1}{p}}\odot_u\left(b^{\frac{1}{p}}\odot_u K\right)=a\odot_{u,p}(b\odot_{u,p}K).
  \end{align*}
For $p=0$, we have 
\begin{align*}
    \ell_{(ab)\circ_{u,0}K}(u;x')=\ell_K(u;x')^{ab}=(\ell_K(u;x')^b)^a=\ell_{b\circ_{u,0}K}(u;x')^a=\ell_{a\circ_{u,0}(b\circ_{u,0}K)}(u;x').
\end{align*}
Now invoke the symmetry of the $L_p$ chord combination about $u^\perp$. The cases $p=\pm\infty$ are trivial.
 
   For	(v), assume first that $p\in\R\setminus\{0\}$. Then we have
  \begin{align*}
     \frac{1}{2}\ell_{a\odot_{u,p}(K_1\oplus_{u,p}K_2)}(u;x') 
     &=\frac{1}{2}a^{\frac{1}{p}}\ell_{K_1\oplus_{u,p}K_2}(u;x')\\
     &=a^{\frac{1}{p}}M_p^{(1,1)}(\ell_{K_1}(u;x'),\ell_{K_2}(u;x'))\\
     &=M_p^{(a,a)}(\ell_{K_1}(u;x'),\ell_{K_2}(u;x'))\\
&=\frac{1}{2}\ell_{a\odot_{u,p}K_1\oplus_{u,p}a\odot_{u,p}K_2}(u;x').
  \end{align*}
Now, using this relation and  the symmetry of each $L_p$ chord combination about $u^\perp$, we obtain the desired identity. The special cases $p=0,\pm\infty$ follow along the same lines. The second identity in (v) is established in a similar way.

  For (vi), since $K_i\subset L_i$, for every $x'\in u^\perp$ we have 
		\[
		\ell_{K_i}(u;x')\leq\ell_{L_i}(u;x')\quad\text{for}\quad i=1,2.
		\]
		Hence, for all $a,b>0$ and all $p\in[-\infty,+\infty]$, we have
		\[
M_p^{(a,b)}(\ell_{K_1}(u;x'),\ell_{K_2}(u;x'))\leq M_p^{(a,b)}(\ell_{L_1}(u;x'),\ell_{L_2}(u;x')).
		\]
		The conclusion now follows from Definition \ref{main-def}.
	\end{proof}

The next result states that the $L_p$ chord combination is a binary operation on $\cK^n$ when $0< p\leq 1$. 

 \begin{theorem}\label{p-sum-convex}
     Let $p\in(0,1]$  and $u\in\Sp$ be given.  Then for all $K_1,K_2\in\mathcal{K}^n$ with $K_1|u^\perp=K_2|u^\perp$ and all $a,b>0$, we have $a\odot_{u,p} K_1\oplus_{u,p} b\odot_{u,p} K_2\in\mathcal{K}^n$. If, furthermore, $K_1,K_2\in\mathcal{K}_o^n$, then $a\odot_{u,p} K_1\oplus_{u,p} b\odot_{u,p} K_2\in\cK_o^n$.
 \end{theorem}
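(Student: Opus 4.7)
The plan is to exploit the fact that the $L_p$ chord combination is symmetric about $u^\perp$ by construction. Writing $C := K_1|u^\perp = K_2|u^\perp$ and defining the half-chord function
\[
F(x') := \tfrac{1}{2} M_p^{(a,b)}\left(\ell_{K_1}(u;x'),\,\ell_{K_2}(u;x')\right), \qquad x' \in C,
\]
the body in question can be written explicitly as $\{x' + tu : x' \in C,\ |t| \leq F(x')\}$. The entire task then reduces to showing that $F$ is a concave, non-negative function on the convex set $C$; once that is in hand, membership in $\cK_o^n$ follows from fairly standard considerations.

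For the concavity of $F$, I would combine two ingredients. First, each chord length function $\ell_{K_i}(u;\cdot) = f_{u,i} - g_{u,i}$ is concave and non-negative on $C$, because $f_{u,i}$ is a concave overgraph function and $g_{u,i}$ is a convex undergraph function on the common projection. Second, for $p \in (0,1]$, the weighted $p$-mean $h(s,t) := M_p^{(a,b)}(s,t) = (as^p + bt^p)^{1/p}$ on $[0,\infty)^2$ is both concave and coordinate-wise non-decreasing. Monotonicity is clear, and the concavity on the positive orthant is equivalent, via positive $1$-homogeneity, to the superadditivity statement
\[
h(s_1+s_2,\,t_1+t_2) \geq h(s_1,t_1) + h(s_2,t_2),
\]
which is the classical reverse Minkowski inequality, valid exactly for $p \leq 1$. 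The function $F$ is then a composition of a concave coordinate-wise non-decreasing function with two concave non-negative functions, hence concave on $C$ by the standard composition rule.

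With $F$ concave, convexity of the body is immediate: for $(x_1',t_1),(x_2',t_2)$ in the body and $\lambda \in [0,1]$, one has $|\lambda t_1 + (1-\lambda) t_2| \leq \lambda F(x_1') + (1-\lambda) F(x_2') \leq F(\lambda x_1' + (1-\lambda) x_2')$, and $\lambda x_1' + (1-\lambda) x_2' \in C$ by convexity of $C$. Compactness follows from continuity of $F$ together with boundedness of $C$. To place the origin in the interior, I would note that each $K_i|u^\perp$ is a convex body in $u^\perp$ containing $o$ in its relative interior, whence so is their (equal) intersection $C$; moreover $F(o) > 0$ since $\ell_{K_i}(u;o) > 0$ for $i=1,2$. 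Combined with the symmetry of the body about $u^\perp$, this forces $o$ to lie in the interior of $a \odot_{u,p} K_1 \oplus_{u,p} b \odot_{u,p} K_2$.

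The only substantive analytic step is the concavity of the weighted $p$-mean on the positive orthant for $p \in (0,1]$, which is classical; the restriction $p \leq 1$ is essential here, as it is exactly the regime where Minkowski's inequality reverses, and this explains the hypothesis in the theorem. Everything else is bookkeeping involving the explicit description of the body as the region between the graphs of $\pm F$ over $C$.
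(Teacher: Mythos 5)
Your proposal is correct and tracks the paper's proof closely: both arguments rest on the concavity of the chord-length functions $\ell_{K_i}(u;\cdot)$ together with the superadditivity (equivalently, concavity via $1$-homogeneity) of the weighted $p$-mean $M_p^{(a,b)}$ for $p\le 1$, which is precisely the reverse Minkowski inequality the paper invokes as ``Minkowski's inequality for sums.'' The only cosmetic differences are that you obtain concavity of $\ell_{K_i}(u;\cdot)$ directly from the overgraph/undergraph decomposition $\ell_{K_i}=f_{u,i}-g_{u,i}$ whereas the paper's Lemma~\ref{chord-lemma} deduces it via a one-dimensional Brunn--Minkowski argument, and you package the convexity check as concavity of the half-chord profile $F$ plus the standard composition rule instead of unrolling the same inequalities at the level of an explicit convex combination of boundary points.
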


 We will use the following lemma.

 \begin{lemma}\label{chord-lemma}
     Let $K\in\cK^n$ and $u\in\Sp$. For any $\lambda\in[0,1]$ and any $x_1,x_2\in K|u^\perp$, we have
     \begin{equation}
         \ell_K(u;\lambda x_1+(1-\lambda)x_2) \geq \lambda\ell_K(u;x_1)+(1-\lambda)\ell_K(u;x_2).
     \end{equation}
 \end{lemma}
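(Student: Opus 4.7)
The plan is to deduce the concavity of the chord length function directly from the convexity of $K$, by exploiting the identity $\ell_K(u;x') = f_u(x') - g_u(x')$ and showing that $f_u$ is concave while $g_u$ is convex on $K|u^\perp$.

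First I would fix $x_1,x_2\in K|u^\perp$ and $\lambda\in[0,1]$, and write $x' := \lambda x_1 + (1-\lambda)x_2$. Consider the four points
\[
P_i := (x_i, f_u(x_i)) \in K \quad \text{and} \quad Q_i := (x_i, g_u(x_i)) \in K, \qquad i=1,2,
\]
which lie in $K$ by the very definition of the over- and undergraph functions. By the convexity of $K$, the convex combinations
\[
\lambda P_1 + (1-\lambda) P_2 = \bigl(x',\,\lambda f_u(x_1)+(1-\lambda)f_u(x_2)\bigr) \quad\text{and}\quad \lambda Q_1 + (1-\lambda) Q_2 = \bigl(x',\,\lambda g_u(x_1)+(1-\lambda)g_u(x_2)\bigr)
\]
also lie in $K$. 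Since $f_u(x')$ is the maximum (and $g_u(x')$ the minimum) of the $u$-coordinates of points in $K$ with projection $x'$, this forces
\[
f_u(x') \geq \lambda f_u(x_1)+(1-\lambda)f_u(x_2) \quad\text{and}\quad g_u(x') \leq \lambda g_u(x_1)+(1-\lambda)g_u(x_2).
\]

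Subtracting the second inequality from the first and using $\ell_K(u;y') = f_u(y')-g_u(y')$ would then yield
\[
\ell_K(u;x') \geq \lambda\bigl(f_u(x_1)-g_u(x_1)\bigr) + (1-\lambda)\bigl(f_u(x_2)-g_u(x_2)\bigr) = \lambda\ell_K(u;x_1)+(1-\lambda)\ell_K(u;x_2),
\]
which is exactly the claim. There is no serious obstacle here: the entire argument is a one-step application of convexity of $K$ combined with the extremal characterization of $f_u$ and $g_u$, and the proof should fit in a few lines.
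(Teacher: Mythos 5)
Your proof is correct, but it takes a genuinely different route from the paper's. The paper first establishes the set inclusion $K\cap\left[\lambda x_1+(1-\lambda)x_2+\R u\right] \supset \lambda\left[K\cap(x_1+\R u)\right]+(1-\lambda)\left[K\cap(x_2+\R u)\right]$ from convexity of $K$, and then applies monotonicity of $\vol_1$, the one-dimensional Brunn--Minkowski inequality, and homogeneity of $\vol_1$ to the resulting Minkowski combination of chords. You instead work directly with the overgraph and undergraph functions: taking the extreme points $(x_i,f_u(x_i))$ and $(x_i,g_u(x_i))$ of the chords (which lie in $K$ because $K$ is compact, so the max/min are attained), using convexity of $K$ to conclude $f_u$ is concave and $g_u$ is convex, and subtracting. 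Your route is arguably the more elementary and economical of the two: it makes no appeal to Brunn--Minkowski (which in dimension one, for intervals, is actually an equality anyway) and instead exhibits the concavity of the chord-length function $\ell_K(u;\cdot)=f_u-g_u$ as an immediate consequence of the concavity/convexity of the bounding graphs, a fact that is standard and used elsewhere in the paper (e.g.\ in the proof of Theorem \ref{p-sum-convex-2}). The paper's version has the virtue of staying in the language of chords and volume that Definition \ref{main-def} and the rest of Section \ref{Lp-chord-section} are phrased in, but the content is essentially the same one-step use of convexity.
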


\begin{proof}
    We begin by showing that for any $x_1,x_2\in K|u^\perp$ and any $\lambda\in[0,1]$, we have
    \begin{equation}\label{convexity-inc}
        K\cap\left[\lambda x_1+(1-\lambda)x_2+\R u\right] \supset \lambda\left[K\cap(x_1+\R u)\right]+(1-\lambda)\left[K\cap(x_2+\R u)\right].
    \end{equation}
Let $x$ be an element of the right-hand side of \eqref{convexity-inc}. Then $x=y+z$, where $y\in\lambda[K\cap(x_1+\R u)]$ and $z\in(1-\lambda)[K\cap(x_2+\R u)]$. Hence $y=\lambda(x_1+su)$ and $z=(1-\lambda)(x_2+tu)$ for some $s,t\in\R$, where $x_1+su,x_2+tu\in K$. Therefore,
\begin{align*}
    x=y+z&=\lambda x_1+(1-\lambda)x_2+\left[\lambda s+(1-\lambda)t\right]u
    \in\lambda x_1+(1-\lambda)x_2+\R u.
\end{align*}
Since $K$ is convex, this implies that
    \[
x=y+z=\lambda(x_1+su)+(1-\lambda)(x_2+tu)\in K.
    \]
Thus, $x$ is an element of the left-hand side of \eqref{convexity-inc}. This proves \eqref{convexity-inc}.

By \eqref{convexity-inc}, the monotonicity and homogeneity of the functional $\vol_1$, and the Brunn--Minkowski inequality, we obtain
\begin{align*}
    \ell_K(u;\lambda x_1+(1-\lambda)x_2) &=\vol_1\left(K\cap[\lambda x_1+(1-\lambda)x_2+\R u]\right)\\
    &\geq\vol_1\left(\lambda[K\cap(x_1+\R u)]+(1-\lambda)[K\cap(x_2+\R u)]\right)\\
    &\geq \vol_1\left(\lambda[K\cap(x_1+\R u)]\right)+\vol_1\left((1-\lambda)[K\cap(x_2+\R u)]\right)\\
    &=\lambda\vol_1\left(K\cap(x_1+\R u)\right)+(1-\lambda)\vol_1\left(K\cap(x_2+\R u)\right)\\
    &=\lambda\ell_K(u;x_1)+(1-\lambda)\ell_K(u;x_2).
\end{align*}
\end{proof}

 \begin{proof}[Proof of Theorem \ref{p-sum-convex}]
 In view of the Heine--Borel theorem, it is clear that the compactness of $K_1$ and $K_2$ implies the compactness of $a\odot_{u,p} K_1\oplus_{u,p} b\odot_{u,p} K_2$.  Thus, it remains to prove that $a\odot_{u,p} K_1\oplus_{u,p} b\odot_{u,p} K_2$ is convex when $p\in(0,1]$.
 
    Let $(x_1',t_1),(x_2',t_2)\in a\odot_{u,p} K_1\oplus_{u,p} b\odot_{u,p} K_2$. Then:
    \begin{itemize}
        \item[(i)] $x_1',x_2'\in K_1|u^\perp=K_2|u^\perp$;

        \item[(ii)] $-\frac{1}{2}M_p^{(a,b)}\left(\ell_{K_1}(u;x_i'),\ell_{K_2}(u;x_i')\right)
        \leq t_i\leq \frac{1}{2}M_p^{(a,b)}\left(\ell_{K_1}(u;x_i'),\ell_{K_2}(u;x_i')\right)$, $i=1,2$.
    \end{itemize}
Let $\lambda\in[0,1]$ and consider the convex combination
\[
\lambda(x_1',t_1)+(1-\lambda)(x_2',t_2) = (\underbrace{\lambda x_1'+(1-\lambda)x_2'}_{=:x'_\lambda},\underbrace{\lambda t_1+(1-\lambda)t_2}_{=:t_\lambda})=(x_\lambda',t_\lambda).
\]
Then we have the following:
\begin{itemize}
    \item[(i)'] By (i) and the convexity of $(K_1|u^\perp)\cap(K_2|u^\perp)$, we have $x'_\lambda\in(K_1|u^\perp)\cap (K_2|u^\perp)=K_i|u^\perp$ for $i=1,2$.

    \item[(ii)'] Using (ii),  Minkowski's inequality for sums and Lemma \ref{chord-lemma}, we derive
    \begin{align*}
        t_\lambda &=\lambda t_1+(1-\lambda)t_2\\
        &\leq\frac{1}{2}\lambda M_p^{(a,b)}\left(\ell_{K_1}(u;x_1'),\ell_{K_2}(u;x_1')\right)
        +\frac{1}{2}(1-\lambda)M_p^{(a,b)}\left(\ell_{K_1}(u;x_2'),\ell_{K_2}(u;x_2')\right)\\
        &=\left[\left(\frac{1}{2}a^{\frac{1}{p}}\lambda\ell_{K_1}(u;x_1')\right)^p+\left(\frac{1}{2}b^{\frac{1}{p}}\lambda\ell_{K_2}(u;x_1')\right)^p\right]^{\frac{1}{p}}\\
        &+\left[\left(\frac{1}{2}a^{\frac{1}{p}}(1-\lambda)\ell_{K_1}(u;x_2')\right)^p+\left(\frac{1}{2}b^{\frac{1}{p}}(1-\lambda)\ell_{K_2}(u;x_2')\right)^p\right]^{\frac{1}{p}}
        \\
        &\leq\left[\frac{a}{2^p}\left(\lambda\ell_{K_1}(u;x_1')+(1-\lambda)\ell_{K_1}(u;x_2')\right)^p+\frac{b}{2^p}\left(\lambda\ell_{K_2}(u;x_1')+(1-\lambda)\ell_{K_2}(u;x_2')\right)^p\right]^{\frac{1}{p}}\\
        &\leq \frac{1}{2}\left[a\ell_{K_1}(u;x'_\lambda)^p+b\ell_{K_2}(u;x'_\lambda)^p\right]^{\frac{1}{p}}\\
        &=\frac{1}{2}M_p^{(a,b)}(\ell_{K_1}(u;x'_\lambda),\ell_{K_2}(u;x'_\lambda)).
    \end{align*}
    The inequality $t_\lambda\geq -\frac{1}{2}M_p^{(a,b)}(\ell_{K_1}(u;x'_\lambda),\ell_{K_2}(u;x'_\lambda))$ is handled similarly. 
\end{itemize}
By (i)' and (ii)', we have $(x'_\lambda,t_\lambda)\in a\odot_{u,p} K_1\oplus_{u,p} b\odot_{u,p} K_2$. Finally, if $K_1$ and $K_2$ contain the origin in their interiors, then $\ell_{K_i}(u;o)>0$ for $i=1,2$, so $\ell_{a\odot_{u,p} K_1\oplus_{u,p} b\odot_{u,p} K_2}(u;o)>0$, which implies that $a\odot_{u,p} K_1\oplus_{u,p} b\odot_{u,p} K_2$ contains the origin in its interior.
 \end{proof}


 \begin{remark}\label{counterexample-rmk}
     If $p>1$, then the $L_p$ chord sum $K_1\oplus_{u,p} K_2$ need not be convex. To illustrate, consider the following example in $\R^2$. Let $K_1=[-1,1]\times[-1,1]$ and $K_2=\conv\{\pm e_1,\pm e_2\}$. Then $K_1|e_2^\perp=K_2|e_2^\perp=\conv\{\pm e_1\}$, which we write as $[-1,1]$. For every $x'\in[0,1]$, we have $\ell_{K_1}(e_2;x')=2$ and $\ell_{K_2}(e_2;x')=2(1-x')$. For every $x'\in[-1,0]$, we have $\ell_{K_1}(e_2;x')=2$ and $\ell_{K_2}(e_2;x')=2(1+x')$. Thus, $K_1\oplus_{e_2,p} K_2$ has overgraph 
     \[
     f_{K_1\oplus_{e_2,p} K_2}(x')=\begin{cases} 
      \left[1+(1-x')^p\right]^{1/p}, & \text{if }x'\in[0,1]; \\
      \left[1+(1+x')^p\right]^{1/p}, & \text{if }x'\in[-1,0].
   \end{cases}
     \]
     Note that this function is not concave on $[-1,1]$. Hence, in this example, $K_1\oplus_{e_2,p} K_2$ is not convex if $p>1$. This example also shows that if $p\neq 1$, then the $L_p$ chord sum of two convex polytopes need not be a convex polytope.
 \end{remark}


\subsection{Brunn--Minkowski-type inequalities for $L_p$ chord sums of convex bodies}

The main result of this section is the following Brunn--Minkowski-type inequality for $L_p$ chord  combinations of convex bodies. 

\begin{theorem}\label{BMI-Lp-fiber-new}
    Let $K_1,K_2\in\cK^n$, $u\in\Sp$ and $a,b>0$. For $i=1,2$ and $x'\in(K_1|u^\perp)\cap(K_2|u^\perp)$, set  $f_{u,i}(x'):=\sup\{t\in\R:\, x'+tu\in K_i\}$, $g_{u,i}(x'):=-\inf\{t\in\R:\,x'+tu\in K_i\}$ and   \[
    \widetilde{K}_i^u:=\left\{(x',t):\, x'\in(K_1|u^\perp)\cap(K_2|u^\perp),\,-g_{u,i}(x')\leq t\leq f_{u,i}(x')\right\}.
    \] 
    \begin{itemize}
        \item[(i)] If $p\in(0,1)$, then 
    \begin{equation*}
        \vol_n(a\odot_{u,p} K_1\oplus_{u,p}b\odot_{u,p} K_2)^p \leq a\vol_n(\widetilde{K}_1^u)^p+b\vol_n(\widetilde{K}_2^u)^p
            \end{equation*}
    with equality  if and only if $\widetilde{K}_1^u=\frac{cb}{a}\odot_{u,p}\widetilde{K}_2^u+tu$ for some constant $c>0$ and $t\in\R$.

        \item[(ii)] If $p>1$, then 
    \begin{equation*}
        \vol_n(a\odot_{u,p} K_1\oplus_{u,p}b\odot_{u,p} K_2)^p \geq a\vol_n(\widetilde{K}_1^u)^p+b\vol_n(\widetilde{K}_2^u)^p
            \end{equation*}
with  equality if and only if $\widetilde{K}_1^u=\frac{cb}{a}\odot_{u,p}\widetilde{K}_2^u+tu$ for some constant $c>0$ and $t\in\R$.
 
        \item[(iii)]     If $p=1$, then 
      \begin{equation*}
        \vol_n(a\odot_u K_1\oplus_u b\odot_u K_2)= a\vol_n(\widetilde{K}_1^u)+b\vol_n(\widetilde{K}_2^u).
    \end{equation*}
    \end{itemize}
\end{theorem}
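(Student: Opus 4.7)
The plan is to reduce the theorem to (the integral form of) Minkowski's inequality. Set $D := (K_1|u^\perp) \cap (K_2|u^\perp)$ and, for brevity, $\ell_i(x') := \ell_{K_i}(u;x')$. By Cavalieri's principle,
\[
\vol_n(\widetilde{K}_i^u) = \int_D \ell_i(x')\,dx' \qquad (i = 1,2),
\]
while from Definition \ref{main-def} together with Proposition \ref{proposition}(i),
\[
\vol_n(a\odot_{u,p}K_1 \oplus_{u,p} b\odot_{u,p}K_2) \;=\; \int_D M_p^{(a,b)}\bigl(\ell_1(x'),\ell_2(x')\bigr)\,dx' \;=\; \int_D \bigl(a\ell_1(x')^p + b\ell_2(x')^p\bigr)^{1/p}\,dx'.
\]
Part (iii) is then an immediate consequence of the linearity of the integral when $p=1$.

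For (i) and (ii), I would set $F_1(x') := a^{1/p}\ell_1(x')$ and $F_2(x') := b^{1/p}\ell_2(x')$, so that the desired inequality becomes
\[
\Bigl(\int_D (F_1^p + F_2^p)^{1/p}\,dx'\Bigr)^p \;\text{vs.}\; \Bigl(\int_D F_1\,dx'\Bigr)^p + \Bigl(\int_D F_2\,dx'\Bigr)^p,
\]
with ``$\geq$'' claimed for $p>1$ and ``$\leq$'' for $p\in(0,1)$. This is precisely Minkowski's integral inequality on the product measure space $D \times \{1,2\}$ (Lebesgue on $D$, counting on $\{1,2\}$): for $p \geq 1$ one has
\[
\Bigl(\sum_{j=1}^{2}\Bigl(\int_D F_j\,dx'\Bigr)^p\Bigr)^{1/p} \;\leq\; \int_D \Bigl(\sum_{j=1}^{2} F_j^p\Bigr)^{1/p}\,dx',
\]
with the inequality reversed on the range $p \in (0,1]$. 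Raising both sides to the $p$-th power yields (ii) for $p>1$ and (i) for $p \in (0,1)$.

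For the equality cases, the equality in Minkowski's integral inequality for $p \neq 1$ forces $F_1$ and $F_2$ to be proportional as functions of $x' \in D$, i.e.\ $\ell_1(x') = (cb/a)^{1/p}\ell_2(x')$ for some constant $c > 0$ independent of $x'$. By Definition \ref{main-def}, this is exactly the condition that $\widetilde{K}_1^u$ and $\tfrac{cb}{a}\odot_{u,p}\widetilde{K}_2^u$ have identical chord-length profiles along $u$; since the latter body has every chord bisected by $u^\perp$, matching the two bodies amounts to a single translation $tu$ that aligns the chord midpoints of $\widetilde{K}_1^u$ with those of $\tfrac{cb}{a}\odot_{u,p}\widetilde{K}_2^u$, giving the stated form of equality.

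The main technical obstacle is the equality case rather than the inequality itself. Minkowski's inequality on $D \times \{1,2\}$ only controls chord lengths, whereas the stated equality condition $\widetilde{K}_1^u = \tfrac{cb}{a}\odot_{u,p}\widetilde{K}_2^u + tu$ additionally requires the chord midpoints of $\widetilde{K}_1^u$ to lie on a single hyperplane parallel to $u^\perp$; verifying that this alignment can always be effected by a single $u$-translation once chord lengths are proportional is the delicate step and must be extracted carefully from the defining convexity of $\widetilde{K}_1^u$ and $\widetilde{K}_2^u$.
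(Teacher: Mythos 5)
Your proof of the inequalities matches the paper's: both write $\vol_n(a\odot_{u,p}K_1\oplus_{u,p}b\odot_{u,p}K_2)=\int_D\bigl(a\ell_{K_1}(u;x')^p+b\ell_{K_2}(u;x')^p\bigr)^{1/p}\,dx'$ over $D=(K_1|u^\perp)\cap(K_2|u^\perp)$ and invoke Minkowski's inequality; the paper phrases it as the $L_{1/p}$ triangle inequality applied to $a\ell_{K_1}^p$ and $b\ell_{K_2}^p$, which is exactly your product-space formulation on $D\times\{1,2\}$. That part is fine.

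The concern you flag about the equality case is well-founded, and in fact the ``only if'' direction of the stated characterization cannot be salvaged: it is false, and the paper's own proof commits the gap you describe. Equality in Minkowski's inequality gives only that $\ell_{K_1}(u;\cdot)$ and $\ell_{K_2}(u;\cdot)$ are proportional on $D$. By the definition of the $L_p$ chord dilation, $\tfrac{cb}{a}\odot_{u,p}\widetilde{K}_2^u+tu$ is symmetric about the hyperplane $u^\perp+tu$, so the condition $\widetilde{K}_1^u=\tfrac{cb}{a}\odot_{u,p}\widetilde{K}_2^u+tu$ forces every chord midpoint of $\widetilde{K}_1^u$ to sit at the single height $t$; this does not follow from proportionality of chord lengths, and convexity does not rescue it. Concretely, in $\R^2$ with $u=e_2$, $p\in(0,1)$, $a=b=1$, take
\[
K_1=K_2=\bigl\{(x,y)\in\R^2:\,x^2+(y-x)^2\le 1\bigr\},
\]
a sheared disk. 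Then $\ell_{K_1}(e_2;x)=\ell_{K_2}(e_2;x)=2\sqrt{1-x^2}$, so equality holds trivially, yet the chord midpoints of $\widetilde{K}_1^u=K_1$ lie on the line $y=x$, whereas those of $c\odot_{u,p}\widetilde{K}_2^u+te_2$ all lie at height $t$; no choice of $c>0$ and $t\in\R$ makes the two sets equal. The correct equality characterization should be phrased in terms of the chord-length functions, e.g.\ ``equality holds iff $\ell_{K_1}(u;\cdot)$ and $\ell_{K_2}(u;\cdot)$ are proportional on $D$,'' or equivalently $S_u(\widetilde{K}_1^u)=\tfrac{cb}{a}\odot_{u,p}\widetilde{K}_2^u$ for some $c>0$.
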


\begin{proof}
(i)   Let $p\in(0,1)$. For $x'\in(K_1|u^\perp)\cap(K_2|u^\perp)$, set $\widetilde{f}_u(x'):=a\ell_{K_1}(u;x')^p$ and $\widetilde{g}_u(x'):=b\ell_{K_2}(u;x')^p$. Integrating along the chords orthogonal to $u^\perp$, we obtain
\begin{align*}
    \vol(a\odot_{u,p}K_1\oplus_{u,p}b\odot_{u,p}K_2) 
&=\int_{(a\odot_{u,p}K_1\oplus_{u,p}b\odot_{u,p}K_2)|u^\perp}\ell_{a\odot_{u,p}K_1\oplus_{u,p}b\odot_{u,p}K_2}(u;x')\,dx'\\
&=\int_{(K_1|u^\perp)\cap(K_2|u^\perp)}\left(a\ell_{K_1}(u;x')^p+b\ell_{K_2}(u;x')^p\right)^{\frac{1}{p}}\,dx'\\
&=\int_{(K_1|u^\perp)\cap(K_2|u^\perp)}\left(\widetilde{f}_u(x')+\widetilde{g}_u(x')\right)^{\frac{1}{p}}\,dx'\\
&=\|\widetilde{f}_u+\widetilde{g}_u\|_{L_{\frac{1}{p}}}^{\frac{1}{p}}
\end{align*}
where $\|f\|_{L_{\frac{1}{p}}}=\left(\int_{\R^n}|f|^{\frac{1}{p}}\,dx\right)^p$ denotes the $L_{\frac{1}{p}}$ norm of a function $f:\R^n\to\R$. Since $\frac{1}{p}\geq 1$, by Minkowski's integral inequality  we have
\begin{align*}
\|\widetilde{f}_u+\widetilde{g}_u\|_{L_{\frac{1}{p}}}^{\frac{1}{p}}& \leq\left(\|\widetilde{f}_u\|_{L_{\frac{1}{p}}}+\|\widetilde{g}_u\|_{L_{\frac{1}{p}}}\right)^{\frac{1}{p}}\\
&=\left[\left(\int_{(K_1|u^\perp)\cap(K_2|u^\perp)}\widetilde{f}_u(x')^{\frac{1}{p}}\,dx'\right)^p+\left(\int_{(K_1|u^\perp)\cap(K_2|u^\perp)}\widetilde{g}_u(x')^{\frac{1}{p}}\,dx'\right)^p\right]^{\frac{1}{p}}\\
&=\bigg[a\left(\int_{(K_1|u^\perp)\cap(K_2|u^\perp)}\ell_{K_1}(u;x')\,dx'\right)^p+b\left(\int_{(K_1|u^\perp)\cap(K_2|u^\perp)}\ell_{K_2}(u;x')\,dx'\right)^p\bigg]^{\frac{1}{p}}\\
&=\left(a\vol_n(\widetilde{K}_1^u)^p+b\vol_n(\widetilde{K}_2^u)^p\right)^{\frac{1}{p}}.
\end{align*}
Equality holds in Minkowski's integral inequality if and only if $\widetilde{f}_u$ and $\widetilde{g}_u$ are proportional, i.e., if and only if $a\ell_{K_1}(u;x')^p=cb\ell_{K_2}(u;x')^p$ for all $x'\in (K_1|u^\perp)\cap (K_2|u^\perp)$ and some constant $c>0$. This means that 
\[
\ell_{K_1}(u;x')=\frac{cb}{a}\odot_{u,p}\ell_{K_2}(u;x')
\]
for all $x'\in(K_1|u^\perp)\cap(K_2|u^\perp)$ and some $c>0$, which holds if and only if $\widetilde{K}_1^u=\frac{cb}{a}\odot_{u,p}\widetilde{K}_2^u+tu$ for some $c>0$ and $t\in\R$.

    (ii) For $p>1$, the same proof above holds with the inequalities reversed, and the same equality conditions hold there as well.
    
    (iii) The case $p=1$ follows from the definition of the $L_1$ chord sum and Cavalieri's principle. 
\end{proof} 

The next result follows immediately from Theorem \ref{BMI-Lp-fiber-new}.

\bc\label{BMI-Lp-fiber-new-cor}
    Let $K_1,K_2\in\cK^n$ and $u\in\Sp$, and suppose that $K_1|u^\perp=K_2|u^\perp$.
    \begin{itemize}
        \item[(i)] If $p\in(0,1)$, then for all $a,b>0$, we have
    \begin{equation}\label{BMI-1}
        \vol_n(a\odot_{u,p} K_1\oplus_{u,p}b\odot_{u,p} K_2)^p \leq a\vol_n(K_1)^p+b\vol_n(K_2)^p
            \end{equation}
         with equality  if and only if $K_1=\frac{cb}{a}\odot_{u,p}K_2+tu$ for some constant $c>0$ and $t\in\R$.

        \item[(ii)] If $p>1$, then for all $a,b>0$, we have
    \begin{equation*}
        \vol_n(a\odot_{u,p} K_1\oplus_{u,p}b\odot_{u,p} K_2)^p \geq a\vol_n(K_1)^p+b\vol_n(K_2)^p
            \end{equation*}
       with  equality  if and only if $K_1=\frac{cb}{a}\odot_{u,p}K_2+tu$ for some constant $c>0$ and $t\in\R$.

        \item[(iii)]     If $p=1$, then for all $a,b>0$, we have
      \begin{equation*}
        \vol_n(a\odot_u K_1\oplus_u b\odot_u K_2)= a\vol_n(K_1)+b\vol_n(K_2).
    \end{equation*}
    \end{itemize}
\ec

\begin{remark}
    If $p=0$, then for all $a,b>0$, we have
        \[
\vol_n(a\odot_{u,0} K_1\oplus_{u,0}b\odot_{u,0} K_2)\leq a\vol_n(\widetilde{K}_1^u)+b\vol_n(\widetilde{K}_2^u)
        \]
      with equality if and only if $\widetilde{K}_1^u=\widetilde{K}_2^u$. This follows along the same lines as the proof of Theorem \ref{BMI-Lp-fiber-new}(i), but now we instead use the AM-GM inequality and its equality conditions. 
\end{remark}

\begin{remark}\label{scaling-Lp-fiber-sum}
    Let $u\in\Sp$, $a,b>0$, $p\in\R$ and $K\in\mathcal{K}^n$. Then
    \begin{align*}
        \vol_n(a\odot_{u,p}K\oplus_{u,p}b\odot_{u,p}K) &= \int_{(a\odot_{u,p}K\oplus_{u,p}b\odot_{u,p}K)|u^\perp}\ell_{a\odot_{u,p}K\oplus_{u,p}b\odot_{u,p}K}(u;x')\,dx'\\
        &=\int_{K|u^\perp}\left(a\ell_K(u;x')^p+b\ell_K(u;x')^p\right)^{\frac{1}{p}}\,dx'\\
        &=(a+b)^{\frac{1}{p}}\vol_n(K).
    \end{align*}
\end{remark}

\subsection{The $L_p$ chord mixed surface area  and its Minkowski's first inequality}

For $u\in\Sp$, $p\in[-\infty,+\infty]$, and $K_1,K_2\in\cK^n$, we define the \emph{$L_p$ chord mixed  surface area} by 
		\begin{align}\label{Lpfibermixed-def}
			S_{u,p}^{\oplus}(K_1,K_2) &:=\lim_{\varepsilon \to 0^+} \frac{\vol_{n}(K_1\oplus_{u,p}\varepsilon\odot_{u,p} K_2) -\vol_{n}(K_1)}{\varepsilon}.
		\end{align}
  
The Minkowski's first inequality for the $L_p$ chord  sum reads as follows.
		
\bt\label{MFI-Lp-fiber}
		Let $K_1,K_2\in\cK^n$ be such that $K_1|u^\perp=K_2|u^\perp$ for some $u\in\Sp$. Let $p\in(0,1)$. Then 
	\begin{equation}\label{mixed-graph-vol-ineq-1}
			S_{u,p}^{\oplus}(K_1, K_2) \leq \frac{1}{p}\cdot\vol_n(K_1)^{1-p}\vol_{n}(K_2)^{p}.
			\end{equation}
   If $p>1$, then the inequality in \eqref{mixed-graph-vol-ineq-1} is reversed. Equality holds if and only if $K_1=c\odot_{u,p}K_2+tu$ for some $c>0$ and $t\in\R$. 
		\et
  
		\begin{proof}
We again adapt the proof of the classical $L_p$ Minkowski's inequality in \cite[Theorem 7.2]{Gardner-BMI}, now to the setting of the $L_p$ chord sum. Substituting $\varepsilon=t/(1-t)$ in \eqref{Lpfibermixed-def}, by Proposition \ref{proposition}(iv) and Remark \ref{scaling-Lp-fiber-sum},
\begin{align*}
    S_{u,p}^{\oplus}(K_1,K_2) &=\lim_{t\to 0^+}\frac{\vol_n\left(K_1\oplus_{u,p}\frac{t}{1-t}\odot_{u,p}K_2\right)-\vol_n(K_1)}{t/(1-t)}\\
    &=\lim_{t\to 0^+}\frac{\vol_n\left(\frac{1}{1-t}\odot_{u,p}\left[(1-t)\odot_{u,p}K_1\oplus_{u,p}t\odot_{u,p}K_2\right]\right)-\vol_n(K_1)}{t/(1-t)}\\
    &=\lim_{t\to 0^+}\frac{(1-t)^{-1/p}\vol_n\left((1-t)\odot_{u,p}K_1\oplus_{u,p}t\odot_{u,p}K_2\right)-\vol_n(K_1)}{t/(1-t)}\\
    &=\lim_{t\to 0^+}\frac{\vol_n\left((1-t)\odot_{u,p}K_1\oplus_{u,p}t\odot_{u,p}K_2\right)-(1-t)^{1/p}\vol_n(K_1)}{t(1-t)^{\frac{1-p}{p}}}\\
    &=\lim_{t\to 0^+}\frac{\vol_n\left((1-t)\odot_{u,p}K_1\oplus_{u,p}t\odot_{u,p}K_2\right)-\vol_n(K_1)}{t}+\lim_{t\to 0^+}\frac{\left[1-(1-t)^{1/p}\right]\vol_n(K_1)}{t}\\
    &=\lim_{t\to 0^+}\frac{\vol_n\left((1-t)\odot_{u,p}K_1\oplus_{u,p}t\odot_{u,p}K_2\right)-\vol_n(K_1)}{t}+\frac{\vol_n(K_1)}{p}.
\end{align*}
Now consider the function $f_p:[0,1]\to(0,\infty)$ defined by $f_p(t):=g_p(t)^p$, where $g_p(t):=\vol_n((1-t)\odot_{u,p}K_1\oplus_{u,p}t\odot_{u,p}K_2)$. The preceding computation shows that
\begin{align*}
    f_p'(0) &=\frac{d^+}{dt}[g_p(t)^p]_{t=0}\\
    &=p\vol_n(K_1)^{p-1}\cdot\lim_{t\to 0^+}\frac{\vol_n((1-t)\odot_{u,p}K_1\oplus_{u,p}t\odot_{u,p}K_2)-\vol_n(K_1)}{t}\\
    &=p\vol_n(K_1)^{p-1}\left[S_{u,p}^{\oplus}(K_1,K_2)-\frac{\vol_n(K_1)}{p}\right]\\
    &=\frac{pS_{u,p}^{\oplus}(K_1,K_2)-\vol_n(K_1)}{\vol_n(K_1)^{1-p}}.
\end{align*}
Thus, the desired inequality \eqref{mixed-graph-vol-ineq-1} is equivalent to $f_p'(0)\leq f_p(1)-f_p(0)$. Note that
\[
f_p(1)=\vol_n(0\odot_{u,p}K_1\oplus_{u,p}1\odot_{u,p}K_2)^p=\vol_n(S_u(K_2))^p=\vol_n(K_2)^p,
\]
and similarly $f_p(0)=\vol_n(K_1)^p$. Now Corollary \ref{BMI-Lp-fiber-new-cor} says that $f_p(t)$ is convex, and by Remark \ref{scaling-Lp-fiber-sum}, 
            \begin{align*}
                S_{u,p}^{\oplus}(K_1,K_1) &=\lim_{\varepsilon \to 0^+} \frac{\vol_{n}(K_1\oplus_{u,p}\varepsilon\odot_{u,p} K_1) -\vol_{n}(K_1)}{\varepsilon}\\
                &=\lim_{\varepsilon \to 0^+} \frac{\left[(1+\varepsilon)^{\frac{1}{p}}-1\right]\vol_{n}(K_1)}{\varepsilon}\\
                &=\frac{\vol_n(K_1)}{p}.
            \end{align*}
          The desired inequality \eqref{mixed-graph-vol-ineq-1} follows. 

          Now suppose that equality holds in \eqref{mixed-graph-vol-ineq-1}. Then by the preceding remarks, $f_p'(0)=f_p(1)-f_p(0)$. Since $f_p$ is convex, 
          \[
\forall t\in(0,1],\qquad \frac{f_p(t)-f_p(0)}{t}=f_p(1)-f_p(0).
          \]
          This is the equality in \eqref{BMI-1}. Therefore, the equality conditions in \eqref{mixed-graph-vol-ineq-1} are the same as those in \eqref{BMI-1}.
            
            The inequality in the case $p>1$ follows in the same way (with the same equality conditions), but now we use Corollary \ref{BMI-Lp-fiber-new-cor}(ii) instead. 
		\end{proof}

		\vskip 5mm \small
		\noindent {\bf Acknowledgments.} We are grateful to Dylan Langharst for the discussions on the topic of this paper. We also would like to thank the anonymous referee for the comments which helped improve the paper. This material is based upon work supported by the National Science Foundation under Grant No. DMS-1929284 while the authors were in residence at the Institute for Computational and Experimental Research in Mathematics in Providence, RI, during the Harmonic Analysis and Convexity program. The second named author gratefully acknowledges support from the Simons Foundation through a Simons Travel Grant---GR021794.
		
		\bibliographystyle{plain}
		\bibliography{main}
		
		\vspace{3mm}
		\noindent Steven Hoehner, \ \ \ {\small \tt hoehnersd@longwood.edu}\\
		{ \em Department of Mathematics \& Computer Science,   Longwood University,
			Farmville, Virginia 23909 }

            \vskip 2mm
		
		\noindent Sudan Xing, \ \ \ {\small \tt sudanxing333@gmail.com}\\
		{ \em Department of Mathematics \& Statistics,   University of Arkansas at Little Rock,
			Little Rock, Arkansas 72204}
		

\end{document}